\theoremstyle{plain}
\newtheorem{theorem}{Theorem}[section]
\newtheorem{lemma}{Lemma}[section]
\newtheorem{asm}{Assumption}[section]
\newtheorem{remark}{Remark}
\newcommand{\R}{\mathbb{R}}
\newcommand{\g}{\nabla}
\newcommand{\fulg}[1]{\nabla F({#1})}
\newcommand{\isgd}[1]{\nabla f_{i_t}({#1})}
\newcommand{\E}{\mathbb{E}}
\newcommand{\iamtk}{\Tilde{A}^{k-1}_{i_t}} 
\newcommand{\am}{\Tilde{A}^{k-1}} 
\newcommand{\svc}{s^k}
\newcommand{\svct}{(s^k)^T}
\newcommand{\yvc}{y^k}
\newcommand{\xsk}{\Tilde{w}_k}
\newcommand{\tig}{\Tilde{g}}
\date{}
\title{A Stochastic Variance Reduced Gradient using Barzilai-Borwein Techniques as Second Order Information}
\begin{document}

\maketitle
\centerline{Hardik Tankaria\footnote{\Letter\ ~\url{hardik7@amp.i.kyoto-u.ac.jp}}}
\medskip
{\footnotesize
 \centerline{Department of Applied Mathematics and Physics}
   \centerline{ Graduate 
School of Informatics, Kyoto University}
   \centerline{Yoshida-Honmachi, Sakyo-ku, Kyoto 606-8501, Japan}
} 

\medskip

\centerline{Nobuo Yamashita\footnote{\Letter\ ~\url{nobuo@i.kyoto-u.ac.jp}}}
\medskip
{\footnotesize
 \centerline{Department of Applied Mathematics and Physics}
   \centerline{ Graduate 
School of Informatics, Kyoto University}
   \centerline{Yoshida-Honmachi, Sakyo-ku, Kyoto 606-8501, Japan}
}

\bigskip


\begin{abstract}
In this paper, we consider to improve the stochastic variance reduce gradient (SVRG) method via incorporating the curvature information of the objective function. We propose to reduce the variance of stochastic gradients using the computationally efficient  Barzilai-Borwein (BB) method by incorporating it into the SVRG. We also incorporate a BB-step size as its variant. We prove its linear convergence theorem that works not only for the proposed method but also for the other existing variants of SVRG with second-order information. 
We conduct the numerical experiments on the benchmark datasets and show that the proposed method with constant step size performs better than the existing variance reduced methods for some test problems.
\end{abstract}
\noindent
\section{Introduction}
  One of important tasks of supervised machine learning is to solve a certain optimization problems efficiently.
One of such optimization problems is the empirical risk minimization problem \cite{shalev2014understanding}. In this paper, we focus on the problem of
minimizing the empirical risk defined as
\begin{equation}\label{objectivefun}
   \min_{w \in \R^d} F(w) = \frac{1}{n} \sum_{i=1}^{n} f_i(w;a_i,b_i), 
\end{equation}
where $f_i(w;a,b):\R^d \to \R$ is the composition of a loss function $\ell$ and a prediction function $h$ parametrized by $w$, that is, $ f_i(w;a_i,b_i) = \ell(h(w;a_i);b_i)$.
For simplicity, we denote $ f_i(w;a_i,b_i) $ as $ f_i(w)$.
For example, when we consider the least squares regression of $n$ training samples $(a_i,b_i)_{i=1}^{n}$, where $a_i \in \R^d $ and $b_i \in \R$, the loss function is given as $f_i(w) = (w^Ta_i - b_i)^2 $. Moreover, in the case of the
$\ell_2$-regularized logistic regression we have $f_i(w) =  \log(1+\exp{(-b_ia_i^Tw)}) + \frac{\lambda}{2}\|w\|_2^2$, 
where $\lambda$ is a $\ell_2$-regularized parameter, $a_i \in \R^d$, and $b_i \in \{-1,1\}$ are the samples associated with 
a binary classification problem.

The growing amount of the data calls for a common challenge that arises in minimizing the objective function $F$ with a large $n$.
We focus on minimizing the average of twice continuously 
differentiable and strongly convex function, i.e., each $f_i(w)$ is strongly convex and twice continuously differentiable. In large-scale machine learning problems, solving such models is 
computationally expensive because of the large training data. In order to solve such large-scale optimization
problems, we need efficient optimization techniques. 

The classical full gradient method~\cite{grad_des_cauchy1847methode} 
computes the full gradient as the name indicates, and uses iterations of the form as follows,
\begin{equation}\label{GD}
    w_{k+1} = w_k - \eta_k \g F(w_k) = w_k  - \frac{\eta_k}{n} \sum_{i=1}^{n} \g f_i(w_k),
\end{equation}
where $\eta_k$ is a stepsize. The convergence rate of the full gradient is sublinear for convex and linear for strongly convex functions. However, it is computationally expensive as it computes
$n$ gradients $\nabla f_i$ at each iteration. Alternatively, the popular way to solve these large-scale optimization problems
is using the stochastic gradient methods which use either a single or small subset (mini-batch) of training data at every 
iteration. The stochastic gradient descent (SGD)~\cite{SGD_Robbins_1951} chooses 
a single training sample $i \in \{1,2, \ldots, n \}$ uniformly at random and update $w_k$ as follows,
\begin{equation}
    w_{k+1} = w_k  - {\eta_k} \g f_i(w_k),
\end{equation}
For global convergence, SGD requires a 
diminishing step size $\eta_k$. Therefore its convergence becomes slow because of the variance of stochastic gradient. 
We can expect that reducing the variance of stochastic gradient allows a constant step size and hence one can attain a faster convergence rate.

In the last decade, many researchers have been involved in an attempt to reduce the variance of the stochastic gradient
with constant step size~\cite{SVRG_NIPS2013, Xiao_2014,sag_roux2012stochastic,saga_defazio2014}. Stochastic average gradient (SAG)~\cite{Schmidt2017minimizing, sag_roux2012stochastic} proposed to use biased updates and achieves a linear convergence rate. Stochastic average gradient accelerated (SAGA)~\cite{saga_defazio2014} was proposed to use an unbiased estimate by improving SAG. Also, there has been some research on the dual minimization ~\cite{Shalev2013stochastic, Zhang2013linear}.
  
{The stochastic variance reduced} gradient~\cite{SVRG_NIPS2013} (SVRG) method explicitly reduce the 
variance through an unbiased estimate of gradient: 
\begin{flalign}\label{SVRG}
 {SVRG} && w_{k+1} & = w_k - \eta_k v_k,&
\end{flalign}
where $ v_k = \nabla f_i(w_k) - \nabla f_i(\Tilde{w}) + \nabla F(\Tilde{w})$ and $\Tilde{w}$ is the resultant vector at the end of previous epoch. Johnsan and Zhang~\cite{SVRG_NIPS2013} showed that variance of $v_k$ is diminishing as $w_k$ is approaching to a solution, and hence sequence converges to linearly with a constant step size. However, the standard SVRG does not exploit the information on the Hessian of $F.$ 

Much attempt has been devoted to stochastic variance reduced gradient methods with the second-order information.  {It is essential to know that second-order information can be applied in two different ways. Natural way is to use second-order approximation of objective function such as stochastic quasi-Newton method~\cite{1schraudolph2007stochastic, 2byrd2016stochastic, 3moritz2016linearly} and SVRG-BB~\cite{SVRGBB_NIPS2016}, etc.
\begin{flalign}\label{eq:QuasiNewton}
 {} && w_{k+1} & = w_k - \eta_k B_k^{-1}g_i(w_k),&
\end{flalign}
where $g_i(w_k)$ is any stochastic gradient which use first-order information such that $\E[g_i(w_k)] = \nabla F(w_k)$ and $B_k$ is an approximation of the Hessian  $[\nabla^2 f(w)]$.

An alternative way to use second-order information is to further reduce the variance via variance reduction technique of the stochastic gradient as
\begin{flalign}\label{eq:SVRG-2}
 {} && w_{k+1} & = w_{k} - \eta_k (v_k +
  {(\g^2 F(\Tilde{w}) - \g^2 f_{i_t}(\Tilde{w})) (w_{k} - \Tilde{w})}),&
\end{flalign}
where $v_k = \g f_{i_t}({w_{k}}) - \g f_{i_t}({\Tilde{w}}) + F(\Tilde{w})$.

Gower et. al., \cite{GOWER18a} proposed a variant of the SVRG which uses the second-order information to further control the variance of the stochastic gradient which is called the SVRG-2 method~\eqref{eq:SVRG-2}. It computes the Hessian with $O(nd^2)$ cost and the theoretical properties of SVRG-2 show that it provides better variance reductions and needs only a small numbers of epochs to converge. Additionally, they \cite{GOWER18a} proposed several Hessian approximations such as low-rank and diagonal Hessian. Note that when dimension $d$ is small, using SVRG-2~\cite{GOWER18a} with true Hessian may be affordable. However, SVRG-2 can be time  consuming when $d$ is large. Moreover, the linear convergence of diagonal and low-rank approximation is not shown in \cite{GOWER18a}.

In order to address these issues, we propose the Barzilai-Borwein approximation~\cite{BB_BARZILAI_1988} to further control the variance using SVRG.} 

{Table~\ref{tab:Different_Methods} shows the clarification among the various variants of SVRG which are first and second order methods with variance reduction through first and second order information.}
\begin{table}[!h]
    \centering
    \caption{various variance reduction methods}
    \begin{tabularx}{1\textwidth} { 
   | >{\raggedright\arraybackslash}X 
   || >{\centering\arraybackslash}X 
   | >{\centering\arraybackslash}X | }
  \hline
   & First-order model & Second-order model \\
  \hline
  \hline
  Variance reduction via first-order information & SVRG~\cite{SVRG_NIPS2013} & SVRG-BB~\cite{SVRGBB_NIPS2016} \qquad  \\
 \hline
 Variance reduction via second-order information  & SVRG-2 and SVRG-2D~\cite{GOWER18a}, SVRG-2BB (Proposed)  & SVRG-2BBS (Proposed)  \\
 \hline
\end{tabularx}
    \label{tab:Different_Methods}
    \vspace{-0.3cm}
\end{table}
\newline
We list our contributions as follows:
\begin{itemize}

\item We propose to use the Barzilai-Borwein(BB) method as a Hessian approximation to further reduce the variance of the stochastic gradient. We call this method SVRG-2BB.
\item We incorporate the Barzilai-Borwein step size for stochastic variance reduced gradient with the SVRG-2BB as another variant. We call the variant SVRG-2BBS.
\item We provide a simpler proof for the linear convergence results of the proposed methods for strongly convex functions. We prove the linear convergence for not only proposed methods, but also for SVRG-2 and SVRG-2D~\cite{GOWER18a}. 

\end{itemize}

This paper organized as follows: In section 2, we briefly discuss SVRG~\cite{SVRG_NIPS2013} and SVRG-2~\cite{GOWER18a} and its observations. In section 3, we first introduce the Barzilai-Borwein method (BB-method)~\cite{BB_BARZILAI_1988}, and then we propose SVRG-2 with BB-method as a Hessian approximation and BB-step size. In section 4, we provide the convergence analysis with simpler proof. In section 5, we conduct the numerical experiments for the proposed methods on $\ell_2$-regularized logistic regression and $\ell_2$-squared SVM. We compare the proposed methods with the existing SVRG and its variants. Finally we make some concluding remarks in section 6.



\section{The existing methods: SVRG and SVRG-2}
 In this section we introduce the SVRG~\cite{SVRG_NIPS2013} and the SVRG-2~\cite{GOWER18a}.

Generally, the stochastic gradient methods attempt to estimates 
the true gradient accurately. Since the expectation of the stochastic gradient $\nabla f_i(w)$ is the true gradient $\nabla F(w),$ i.e., $\E[\nabla f_i(w)]=\frac{1}{n} \sum_{i=1}^{n} \nabla f_i(w) =  \nabla F(w), i\in \{1,\ldots,n\}$. Thus, in order to get the best estimate of the stochastic gradient, it is natural to apply variance reduction techniques.

First we give a common framework of SVRG and SVRG-2.
\begin{algorithm}[!h]
 \caption{\textbf{\rm}} 
\begin{algorithmic}[1]
\label{alg:Algorithm_1}
  \item[\textbf{Parameters:}] update frequency $m$ and step size $\eta$, $\Tilde{g}$ is gradient of $F$ at $\Tilde{w}$
  \item[\textbf{Initialize}] $\Tilde{w}_0$
  \item[\textbf{1. for}] $k = 1, 2, \ldots$
  \item[\textbf{2. }] $\Tilde{w} = \Tilde{w}_{k-1}$
  \item[\textbf{3. }] $\Tilde{g} = \frac{1}{n} \sum_{i=1}^{n} \nabla f_i({\Tilde{w}})$
  \item[\textbf{4. }] $\Tilde{A}$ is given 
  \item[\textbf{5. }] $w_0 = \Tilde{w} $
  \item[\textbf{6. \quad for}] $t = 1,\ldots, m$
  \item[\textbf{7. \qquad }] Randomly pick $i_t \in \{1,\ldots,n\}$
  \item[\textbf{8. \qquad }] $\Tilde{A}_{i_t}$ such that $\mathbb{E}[\Tilde{A}_{i_t}] = \Tilde{A} $ is given 
  \item[\textbf{9. \qquad }] \textbf{$w_{t} = w_{t-1} - \eta\ (\isgd{w_{t-1}} - \isgd{\Tilde{w}} + \Tilde{g} + {(\Tilde{A} - \Tilde{A}_{i_t}) (w_{t-1} - \Tilde{w})})$}
  \item[\textbf{10. \quad end for}]
  \item[\textbf{11.\ option {1}}] $ \Tilde{w}_{k} = w_m$
  \item[\textbf{\ \ \quad option {2}}] $\Tilde{w}_{k} = w_{t}$ for randomly chosen $t \in \{0, \ldots,m-1\}$
  \item[\textbf{12. end for}]
  \item[\textbf{Output }] $\Tilde{w}_{k} = w_m$
\end{algorithmic}
\end{algorithm} 

The search direction of Algorithm 1 uses $ \g f_{i_t}({\Tilde{w}})$ at $\Tilde{w}$ for a current epoch $k$, and $\Tilde{w}$ does not update till the next epoch $k$. 
 If $\Tilde{w}$ does not change much, then it leads to slow convergence.
Therefore it is important to use the matrices $\Tilde{A} $ and ${A}_{i_t} $ which is computationally affordable and satisfies following properties:
\begin{itemize}
    \item[\rm {1}.] Unbiased estimate of $\Tilde{A}_{i_t}$, that is
        $$
    \E[\Tilde{A}_{i_t}] = \Tilde{A}.
        $$
    \item[\rm {2}.] Approximation of $\Tilde{A}_{i_t}$ and $\Tilde{A}$ such that,
        $$
        \Tilde{A}_{i_t} \approx \nabla^2 f_{i_t}(\Tilde{w})  \text{\ and \ } \Tilde{A} \approx \nabla^2 F(\Tilde{w}).
        $$
\end{itemize}
From property 1, we have $\mathbb{E}(\isgd{w_{t-1}} - \isgd{\Tilde{w}} + \Tilde{g} + {(\Tilde{A} - \Tilde{A}_{i_t}) (w_{t-1} - \Tilde{w})}) = \nabla F(w_{t-1}) $, and hence we can expect that Algorithm 1 finds a solution. The property 2 is to reduce the variance of $w_t$. 

Algorithm 1 includes the well-known SVRG and its variants as follows:
\begin{alignat}{2}
& \textbf{SVRG~\cite{SVRG_NIPS2013}}:\  && \Tilde{A} = \Tilde{A}_{i_t} = 0, \label{alg_svrg} \\
& \textbf{SVRG-2~\cite{GOWER18a}}   :\  && \Tilde{A} = \nabla^2 f(\Tilde{w}), \Tilde{A}_{i_t} = \nabla^2 f_{i_t} (\Tilde{w}) \text{\ with option  {1}}, \label{alg_svrg2nd}\\
& \textbf{SVRG-2D~\cite{GOWER18a}}   :\  && \Tilde{A} = \text{diagonal}(\nabla^2 f(\Tilde{w})), \Tilde{A}_{i_t} = \text{diagonal}(\nabla^2 f_{i_t} (\Tilde{w})) \ \text{with option {1}}, \label{alg_svrg2_diag}
\end{alignat}
where a $\text{diagonal}(G)$ denotes a diagonal matrix whose diagonal entries are $G$, that is
\begin{equation}\label{eq:diagonal_def}
        G_{i,j} = \begin{cases}
    g_{i,j} & \mbox{if the $i = j$}\\
    0 & \text{otherwise}.
    \end{cases}
  \end{equation}
 The SVRG proposed~\cite{SVRG_NIPS2013} computes variance reduced gradient where gradient of $F$ at $\Tilde{w}$, i.e., $\Tilde{g}$ does not update till the end of epoch. Hence its reduction of the variance is not much, which causes slow convergence. Moreover, it needs to use smaller step size to maintain the linear convergence.

   The SVRG-2 proposed by Gower et. al.~\cite{GOWER18a} exploits the difference of Hessian 
$(\nabla^2 F(\Tilde{w}) - \nabla^2 F_i(\Tilde{w})) $  by multiplying
with $(w_t - \Tilde{w})$, which uses the current parameter $w_t$ and updates at every iteration. In practice when the 
Hessian is sparse or $d$ is small, calculating $\g^2 F(\Tilde{w})$ and computing $\g^2 F(\Tilde{w}) (w_{t-1}-\Tilde{w})$ is 
computationally tractable. However, when $d$ is large, it is not the good idea to 
use original Hessian since it costs $O(nd^2)$.
The SVRG-2D proposed by Gower et. al.~\cite{GOWER18a} computes the diagonal Hessian, which does not perform well as seen in section 5.


\section{SVRG-2 with the Barzilai-Borwein approximation}

In this section, we first introduce the standard BB-method~\cite{BB_BARZILAI_1988}, then we propose to use the BB-method as $\Tilde{A}$ and $\Tilde{A}_{i_t}$ in Algorithm~\ref{alg:Algorithm_1} to further reduce the variance. Finally, we propose to use BB-step size along with the proposed BB variant of SVRG-2.

\subsection{Barzilai-Borwein method}

In this section we recall the Barzilai and Borwein method~\cite{BB_BARZILAI_1988} for solving unconstrained optimization deterministically.

Barzilai and Borwein~\cite{BB_BARZILAI_1988}  proposed a two-point step size gradient
method~\cite{BB_BARZILAI_1988} which is also known as the BB-method. 
This method is popular to solve unconstrained optimization problems which is
motivated from the quasi-Newton methods. 

Consider the unconstrained optimization problem:
\begin{equation*}
    \min_w f(w),
\end{equation*}
where $f$ is a differentiable function. 

The quasi-Newton method~\cite{Num_Opt_Nocedal_1999} solve
minimization problem by using the approximate Hessian which needs to satisfy the 
secant equation. The quasi-Newton method takes the iteration of the form:
$$
w_{k+1} = w_k -\eta_k B_k^{-1}\g f(w_k),
$$
where $ B_k$ is an approximation of the Hessian matrix of $f$ at the $w_k$. The 
approximate Hessian 
$B_k$ needs to satisfy the following secant equation~\cite{Num_Opt_Nocedal_1999}:
$$
B_k s^k = y^k,
$$
where $ s^k = w_{k} - w_{k-1}$ and $y^k = \g f(w_k) - \g f(w_{k-1})$ for $k\geq 1$. The well-known BFGS method requires $O(d^2)$ time and space complexities for computing $B_k$.

In practice when $d$ is large, it is time consuming to compute the $B_k$. The BB-method restricts $B_k$ as $B_k = \frac{1}{\alpha_k}\mathrm{I},$ with $\alpha > 0$ and gets $\alpha_k$ by the secant equation in the least square sense, i.e.,
\begin{equation}
\min_{\alpha} \left\| \frac{1}{\alpha}s^k - y^k\right\|^2,
\label{bb1-problem}
\end{equation}
Note that when $(s^k)^\top y^k > 0$ the solution of ~\eqref{bb1-problem}, 
$$\alpha_k = \frac{\|s^k\|^2}{(s^k)^Ty^k}.$$
Then it is clear that, 
\begin{equation}
    B_k = \frac{(s^k)^Ty^k}{{\|s^k\|^2}} \mathrm{I}.
    \label{bb-solution}
\end{equation}
An alternative way to get $\alpha_k$ can be given as follows.
$$
\min_{\alpha} \left\| s^k - {\alpha}y^k\right\|^2,
$$
and the solution is given by,

\begin{equation}
    \label{eq:alternate_bb}
    \alpha_k = \frac{{(s^k)}^Ty^k}{\|y^k\|^2}, \quad B_k = \frac{\|y^k\|^2}{(s^k)^Ty^k} \mathrm{I}.
\end{equation}

\begin{remark}
Note that when $f$ is strongly convex, we have $(s^k)^\top y^k > 0.$
\end{remark}
\subsection{SVRG-2 with the Barzilai-Borwein method (SVRG-2BB)}

In this subsection, we propose Algorithm 1 with the BB-methods~\eqref{bb-solution}. The previous subsection introduced two different approximations of $B_k $ of \eqref{bb-solution} and \eqref{eq:alternate_bb}. We adopts the
~\eqref{bb-solution} for the rest of the paper because~\eqref{bb-solution} is 
convenient for convergence analysis. 

Now we propose to use the BB-method for a Hessian approximation along with Algorithm 1. We call it the SVRG-2BB method.

\vspace{0.5cm}
\noindent\fbox{
\begin{minipage}{\textwidth}
    \textbf{SVRG-2BB:} Algorithm 1 with ~\eqref{bb-solution} for the approximate Hessian of $f_i(w)$, that is
    \begin{equation} \label{eq:svrg2bb_A}
    \Tilde{A}^k = \frac{\svct \yvc}{\|\svc\|^2} \mathrm{I} = \frac{1}{n} \sum_{i=1}^{n}
    \frac{\svct (\g f_{i}(\xsk) - \g f_{i}(\Tilde{w}_{k-1}))}{\|\svc\|^2},
    \end{equation}

    \begin{equation}
        \label{alg_svrgapp}
    \Tilde{A}^{k}_{i_t} = \frac{\svct (\g f_{i_t}(\xsk) - \g f_{i_t}(\Tilde{w}_{m-1}))}{\|\svc\|^2},
    \end{equation}
    where $s^k = \xsk - \Tilde{w}_{k-1}$ and $y^k = \g F(\xsk) - \g F(\Tilde{w}_{k-1})$. Hence, SVRG-2BB is Algorithm \ref{alg:Algorithm_1} with \eqref{eq:svrg2bb_A} in step 4 and \eqref{alg_svrgapp} in step 8.
\end{minipage}}
\vspace{0.4cm}

It is easy to see that $\E[\Tilde{A}^{k}_{i_t} ] = \Tilde{A}^k$ for the above approximations derived from the BB-method. Since the approximate Hessian of BB-method is $\alpha_k \mathrm{I}$, we can treated 
$\Tilde{A}^k$ and $\Tilde{A}_{i_t}^k$ as scalars. Then, it is clear that 
$(\Tilde{A} - \Tilde{A}_{i_t}) (w_{t-1} - \Tilde{w})$ is the same direction of 
the $(w_{t-1} - \Tilde{w}) $, and $v_t$ given as 
\begin{equation}
    v_t = \isgd{w_{t-1}} - \isgd{\Tilde{w}} + \Tilde{g} - 
  \iamtk (w_{t-1} -   \Tilde{w}) + \am (w_{t-1} - \Tilde{w}),
    \label{vk}
\end{equation}
where computational costs is $O(n)$ when $\nabla f_{i_t}(w)$ is provided.

\begin{remark}
It is important to note that $(s^k)^Ty^k = (w_{k} - w_{k-1})^T(\nabla f(w_{k}) - \nabla f(w_{k-1})) > 0 $ is not necessarily true when the objective function is not convex. In order to handle this issue, we give a simple remedy to use SVRG-2BB as follows:
\begin{equation*}
    \Bar{A}^k = \max\left\{ \frac{(s^k)^T(y^k)}{\|s^k\|^2}, \delta \right\},
\end{equation*}
where $\delta > 0$.
\end{remark}

\subsection{SVRG-2BB with a Barzilai-Borwein step size (SVRG-2BBS)}

In this subsection we propose SVRG-2BB with the BB-step size. The typical acceleration technique of the SVRG is to use a quasi-Newton method as the quadratic model of the objective function, and generates an iterate as
\begin{equation}\label{quasi_Newton_method}
    w_{k+1} = w_k - \eta_k B_{k}^{-1} (\nabla f_i(w_k) - \nabla f_i(\Tilde{w}) + \nabla F(\Tilde{w})),
\end{equation}
where $B_{k}$ is either Hessian or approximate Hessian of the objective
function~\eqref{objectivefun} at previous epoch.
One of such methods is SVRG-BB~\cite{SVRGBB_NIPS2016} which employs the second-order via the information
from BB-method and $\eta_k = 1/m$. 
SVRG-BB updates $B_{k}^{-1}$ as follows, 
$$B_{k}^{-1} := \frac{\|s_{k}\|^2}{(s_{k}^Ty_{k})}\mathrm{I}, $$
where $s^k = \Tilde{w}_{k-1} - \Tilde{w}_{k-2}$, $y^k = \nabla F(\Tilde{w}_{k-1}) - \nabla F(\Tilde{w}_{k-2})$ and $m$ is the length of an epoch.
Note that SVRG-BB is regarded as Algorithm~\ref{alg:Algorithm_1} with
$$\eta_k = \frac{ B_{k}^{-1}}{m}=\frac{1}{m}\cdot\frac{\|s_{k}\|^2}{(s_{k}^Ty_{k})}.$$

Similarly, we exploit the idea of using BB-step size with SVRG-2BB so that we can make use of the gradient information and parameters to further reduce the variance, i.e., to make the most use of gradient information from the BB-method. We call it SVRG-2BBS.

\vspace{0.4cm}
\noindent\fbox{
\begin{minipage}{\textwidth}
SVRG-2BBS: SVRG-2BB with variable stepsize $\eta_k^{t}$ defined by
\begin{equation}
    \eta_{k}^{t} = \frac{\xi_t}{m_1}\frac{\|\Tilde{w}_{k-1} - \Tilde{w}_{k-2}\|^2}{((\Tilde{w}_{k-1} - \Tilde{w}_{k-2})^T (\Tilde{g}_{k-1} - \Tilde{g}_{k-2}))}, \label{eq:svrgbb2s1}
\end{equation}
where $\xi_t > 0$.
\end{minipage}}
\vspace{0.4cm}

 Note that SVRG-BB~\cite{SVRGBB_NIPS2016} computes the BB-stepsize per epoch. Whereas in SVRG-2BBS, we control and update the BB-step size by multiplying an appropriate constant $\xi_t$. Note also that the denominator of the BB-step size in \eqref{eq:svrgbb2s1} usually becomes very large since the number of samples $m=2n$ is large. To overcome the difficulty, we modify $\eta_k$ by multiplying $\xi_t$ and by replacing constant $m$ to $m_1$.
 
 \begin{remark}
 It is noteworthy to mention that the update frequency in the inner \textbf{for} loop of SVRG-2BBS is $m = 2n$ (i.e., similar to SVRG). We replaced $m$ with $m_1$ only in the stepsize $\eta^t_k$ of SVRG-2BBS and not in the update frequency.
 \end{remark}
\section{Convergence analysis}
  In this section, we show the linear convergence of Algorithm~\ref{alg:Algorithm_1} when $F$ is strongly convex.

We first analyze the difference in the variances of the SVRG and Algorithm~\ref{alg:Algorithm_1}. We make the following assumptions.

\begin{asm} \label{strongly convex}
Each $f_i$ is $ \mu_i$-strongly convex, $i.e.,$ there exists $\mu_i > 0$ such that
\begin{equation*}
    f_i(w) \geq f_i(z) + \g f_i(z)^T(w-z) + \frac{\mu_i}{2} \|z-w\|^2~\forall w,z\in \R^d.
\end{equation*}
\end{asm}
\noindent Moreover the gradient of each $f_i$ is $L$-Lipschitz continuous, $i.e.,$
\begin{equation*}
    \|\g f_i(w) - \g f_i(z)\| \leq L \|w-z\| ~\forall w,z \in \R^d.
\end{equation*}

\noindent Under this assumption, it is clear that $\g F(w)$ is also $L$-Lipschitz continuous:
\begin{equation}\label{smoothness}
    \|\g F(w) - \g F(z)\| \leq L \|w-z\| ~\forall w,z \in \R^d.
\end{equation}

Next lemmas are useful for our analysis. We omit its proof since it is directly follows from ~\eqref{smoothness}.
\begin{lemma}
If $f_i$ is $L$-Lipschitz continuous, then
\begin{equation*}\label{syl-Lipschitz_inequality}
     (w-z)^T (\g f_i(w) - \g f_i(z)) \leq L \| w - z \|^2,
\end{equation*}
\end{lemma}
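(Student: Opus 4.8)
The plan is to bound the inner product by the product of norms and then apply Lipschitz continuity. First I would invoke the Cauchy--Schwarz inequality to write
\[
(w-z)^T (\g f_i(w) - \g f_i(z)) \leq \|w-z\|\,\|\g f_i(w) - \g f_i(z)\|,
\]
which holds for any pair of vectors in $\R^d$. Next I would use the hypothesis that $\g f_i$ is $L$-Lipschitz continuous, i.e. $\|\g f_i(w) - \g f_i(z)\| \leq L\|w-z\|$, and substitute this bound into the right-hand side above to obtain
\[
(w-z)^T (\g f_i(w) - \g f_i(z)) \leq \|w-z\|\cdot L\|w-z\| = L\|w-z\|^2,
\]
which is exactly the claimed inequality.

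There is essentially no obstacle here: the statement is an immediate two-line consequence of Cauchy--Schwarz combined with the Lipschitz assumption, which is why the authors chose to omit it. The only minor point worth noting is that the same argument applies verbatim with $f_i$ replaced by $F$ (using \eqref{smoothness}), so the lemma could equally be stated for $F$; but as written, the single application of Cauchy--Schwarz followed by the Lipschitz bound on $\g f_i$ suffices and no strong convexity (Assumption~\ref{strongly convex}) is needed.
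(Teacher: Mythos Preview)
Your proposal is correct and is exactly the argument the paper has in mind: the authors omit the proof of this lemma, but they reproduce precisely your two-step Cauchy--Schwarz plus Lipschitz bound later in the paper (see the displayed inequality \eqref{2.3} in the proof of the SVRG-2BB/SVRG-2D lemma). Your side remarks about the statement applying equally to $F$ and not requiring strong convexity are also accurate.
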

and hence $s^Ty \leq L \|s\|^2$ for $s = w - z$ and $y = \g f_i(w) - \g f_i(z)$. 

The next lemma gives the result of the Lipschitz continuous Hessian.
\begin{lemma}~{\rm{\cite{Nesterov_2004}}}\label{lips-Hess_lemma}
If each $f_i : \R^d \to \R$ be twice continuous differentiable with $\Tilde{L}_t$-Lipschitz 
continuous Hessian, then for any $w,y \in \R^d$ we have,
\begin{equation*}
    \|{\isgd{w} - \isgd{z} -\g^2 f_{i_t}({w}) (w-{z}) }\| \leq 
        \frac{\Tilde{L}_t}{2}\|w - z \|^2.
\end{equation*}
\end{lemma}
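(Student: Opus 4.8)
The plan is to establish this bound via the integral (Taylor) form of the fundamental theorem of calculus for the gradient, followed by the Lipschitz estimate on the Hessian. First I would fix $w,z \in \R^d$ and write
\[
\isgd{w} - \isgd{z} = \int_0^1 \g^2 f_{i_t}\bigl(z + \tau(w-z)\bigr)(w-z)\, d\tau,
\]
which is valid because each $f_{i_t}$ is twice continuously differentiable. Since $\g^2 f_{i_t}(w)(w-z) = \int_0^1 \g^2 f_{i_t}(w)(w-z)\, d\tau$, subtracting this from both sides gives
\[
\isgd{w} - \isgd{z} - \g^2 f_{i_t}(w)(w-z) = \int_0^1 \bigl(\g^2 f_{i_t}(z+\tau(w-z)) - \g^2 f_{i_t}(w)\bigr)(w-z)\, d\tau.
\]

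Next I would take norms on both sides, move the norm inside the integral via the triangle inequality for integrals, and apply the submultiplicative bound $\|Av\| \le \|A\|\,\|v\|$ together with $\Tilde{L}_t$-Lipschitz continuity of $\g^2 f_{i_t}$:
\[
\bigl\|\g^2 f_{i_t}(z+\tau(w-z)) - \g^2 f_{i_t}(w)\bigr\| \le \Tilde{L}_t \,\bigl\|z + \tau(w-z) - w\bigr\| = \Tilde{L}_t\,(1-\tau)\,\|w-z\|.
\]
Substituting and integrating then yields $\int_0^1 \Tilde{L}_t (1-\tau)\,\|w-z\|^2\, d\tau = \tfrac{\Tilde{L}_t}{2}\|w-z\|^2$, which is exactly the asserted inequality.

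The argument is essentially routine, so I do not anticipate a genuine obstacle; the one point that needs a little care is that the statement evaluates the Hessian at the endpoint $w$ rather than at $z$ or at an interior point, so one must keep track of the factor $\|z+\tau(w-z)-w\| = (1-\tau)\|w-z\|$ accurately in order to land the constant $\Tilde{L}_t/2$ rather than $\Tilde{L}_t$. Since the lemma is quoted from \cite{Nesterov_2004}, I would in practice simply cite it; the three displayed steps above constitute a self-contained proof if one is wanted.
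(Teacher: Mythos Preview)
Your argument is correct and is exactly the standard proof of this estimate. Note, however, that the paper does not supply its own proof of this lemma: it is stated with a citation to \cite{Nesterov_2004} and used without further justification. So there is nothing to compare against; your three displayed steps are precisely the textbook derivation one would expect behind that citation.
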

\noindent Next two Lemmas give results from the strongly convexity.
\begin{lemma}\label{lem:Grad_Hess_ineq}~{\rm \cite{Nesterov_2004}}
If each $f_i$ is $\mu$-strongly convex, then for any $w,y \in \R^d$ we have
$$
\| f_i(w) - f_i(z) \| \geq \mu_i \|w - z \|.
$$
\end{lemma}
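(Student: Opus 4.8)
The plan is to obtain this inequality as a routine consequence of $\mu_i$-strong convexity. First I would note that, as stated, the conclusion should read $\|\g f_i(w) - \g f_i(z)\| \geq \mu_i\|w-z\|$ (this is the estimate recorded in~\cite{Nesterov_2004}, and the literal scalar version printed above cannot hold near a minimizer); the case $w=z$ is trivial, so I would assume $w\neq z$. The starting point is Assumption~\ref{strongly convex}, applied to the ordered pairs $(w,z)$ and $(z,w)$:
\begin{align*}
 f_i(w) &\geq f_i(z) + \g f_i(z)^T(w-z) + \frac{\mu_i}{2}\|w-z\|^2,\\
 f_i(z) &\geq f_i(w) + \g f_i(w)^T(z-w) + \frac{\mu_i}{2}\|w-z\|^2.
\end{align*}

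Next I would add the two inequalities. The function values $f_i(w)$ and $f_i(z)$ cancel, and the two linear terms combine into $(\g f_i(w)-\g f_i(z))^T(w-z)$, leaving the strong monotonicity estimate
\begin{equation*}
 (\g f_i(w) - \g f_i(z))^T(w-z) \geq \mu_i\|w-z\|^2.
\end{equation*}

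Finally, I would bound the left-hand side above by the Cauchy--Schwarz inequality, $(\g f_i(w) - \g f_i(z))^T(w-z) \leq \|\g f_i(w) - \g f_i(z)\|\,\|w-z\|$, so that $\|\g f_i(w) - \g f_i(z)\|\,\|w-z\| \geq \mu_i\|w-z\|^2$, and then divide both sides by $\|w-z\|>0$ to conclude. I do not expect any genuine obstacle here: the only points that need care are treating the degenerate case $w=z$ separately and correcting the typo in the statement; indeed, the result could alternatively just be cited from~\cite{Nesterov_2004}, as the authors already do.
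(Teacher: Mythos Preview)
Your argument is correct and entirely standard: symmetrize the strong-convexity inequality to obtain strong monotonicity of $\g f_i$, then apply Cauchy--Schwarz and cancel $\|w-z\|$. You are also right that the printed statement has a typo and should involve $\|\g f_i(w)-\g f_i(z)\|$ rather than $\|f_i(w)-f_i(z)\|$; this is confirmed by how the lemma is actually used later in the paper (e.g.\ in the proof of Lemma~4.7, where $\mu_{i_t}\|w_{t-1}-\Tilde{w}\|\leq\|\isgd{w_{t-1}}-\isgd{\Tilde{w}}\|$ is invoked).

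As for comparison with the paper: there is nothing to compare. The paper does not give a proof of this lemma at all---it simply attributes the result to~\cite{Nesterov_2004} and moves on. So your proposal supplies strictly more than the paper does, and what you supply is exactly the textbook derivation one would find in the cited reference.
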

\begin{lemma}~{\rm\cite{Nesterov_2004}}
If $F$ is $\mu$-strongly convex function, $w_*$ is a minimum ${F(w)}$,
$$
F(w) - F(w_*) \geq \frac{\mu}{2} \| w - w_* \|^2
$$
for all $w \in \R^d.$
\label{lemma-str-con-fe}
\end{lemma}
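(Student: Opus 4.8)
The plan is to derive the bound directly from the definition of $\mu$-strong convexity applied to $F$ itself (the analogue of Assumption~\ref{strongly convex}, now with $F$ in place of $f_i$), evaluated at the minimizer. Strong convexity of $F$ means that for all $w,z\in\R^d$,
$$F(w) \geq F(z) + \g F(z)^T(w-z) + \frac{\mu}{2}\|w-z\|^2 .$$

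First I would specialize this inequality to $z = w_*$. Because $w_*$ is an unconstrained minimizer of the differentiable function $F$ (recall $F$ is an average of twice continuously differentiable $f_i$), the first-order optimality condition gives $\g F(w_*) = 0$, so the linear term drops out and we obtain
$$F(w) \geq F(w_*) + \frac{\mu}{2}\|w-w_*\|^2 .$$
Rearranging yields $F(w) - F(w_*) \geq \frac{\mu}{2}\|w-w_*\|^2$ for every $w\in\R^d$, which is exactly the claim.

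There is essentially no obstacle here: the argument is a single substitution into the strong-convexity inequality. The only step worth a word of justification is the vanishing of the gradient at $w_*$, which is immediate from differentiability of $F$ together with the fact that $w_*$ attains the global minimum; one may also recall that $\mu$-strong convexity already guarantees that such a (unique) minimizer exists, though the statement supplies $w_*$ directly.
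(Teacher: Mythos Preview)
Your argument is correct and is the standard one-line derivation: apply the strong-convexity inequality at $z=w_*$ and use $\nabla F(w_*)=0$. The paper does not actually prove this lemma; it simply cites \cite{Nesterov_2004}, so there is nothing to compare against beyond noting that your proof is exactly the expected textbook argument.
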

Now we compare the variance of the SVRG-2 method with that of the original SVRG. 
 Let $v^{svrg}_{t} = \isgd{w_{t-1}} - \isgd{\Tilde{w}} + \nabla F(\Tilde{w}) $
for the SVRG. Recall that $\mathbb{E}[v_t^{svrg}] =  \g F(w_{t-1})$. Moreover we have 

\begin{align}
        \E[\|v^{svrg}_t - \fulg{w_{t-1}} \|^2] & = \E[\|\isgd{w_{t-1}} - \isgd{\Tilde{w}} + \nabla F(\Tilde{w}) - \fulg{w_{t-1}}\|^2] \nonumber\\
        & = \E[\| \isgd{w_{t-1}} - \isgd{\Tilde{w}} - \E[\isgd{w_{t-1}} - \isgd{\Tilde{w}}]\|^2]\nonumber \\
        & \leq \E[\|\isgd{w_{t-1}} - \isgd{\Tilde{w}}\|^2 ] \nonumber\\
        & = O(\|w_{t-1} - \Tilde{w}\|^2), \label{eq:svrg_variance}
\end{align}
where the inequality follows from the fact that 
$\E[\|\xi - \E[\xi]\|^2]=\E[\| \xi\|^2] - \|\E[\xi]\|^2\leq \E[\|\xi\|^2]$.
 Let $v^{2nd}_{t} = \isgd{w_{t-1}} - \isgd{\Tilde{w}} + \nabla F(\Tilde{w}) +{(\g^2 F(\Tilde{w}) - \g^2 
f_{i_t}(\Tilde{w})) (w_{t-1} - \Tilde{w})}$ for the SVRG-2 method. We see that $\E[v_t^{2nd}] = \nabla F(w_{t-1})$. Moreover we have
\begin{align}
    &\E[\|v^{2nd}_t - \fulg{w_{t-1}} \|^2]  \nonumber\\
    & =  \E[\|{(\isgd{w_{t-1}} - \isgd{\Tilde{w}} + \nabla F(\Tilde{w}) )} -\g^2 f_{i_t}(\Tilde{w}) (w_{t-1}-\Tilde{w}) \nonumber \\
    & \qquad \qquad \qquad \qquad \qquad \qquad  \qquad +\g^2 
F(\Tilde{w}) (w_{t-1} -\Tilde{w}) - \fulg{w_{t-1}}\|^2 ]\nonumber\\
    & = \E[\|{(\isgd{w_{t-1}} - \isgd{\Tilde{w}} -\g^2 f_{i_t}(\Tilde{w}) (w_{t-1}-\Tilde{w}) )} \nonumber\\
    & \qquad \qquad \qquad \qquad \qquad - \E[\| {
    \isgd{w_{t-1}} - \isgd{\Tilde{w}} -\g^2 f_{i_t}(\Tilde{w}) (w_{t-1}-\Tilde{w})}\|]\|^2] \nonumber \\
    & \leq \E[\|{(\isgd{w_{t-1}} - \isgd{\Tilde{w}} -\g^2 f_{i_t}(\Tilde{w}) (w_{t-1}-\Tilde{w}) )}\|^2 ]\nonumber\\
    & = O(\|w_{t-1} - \Tilde{w}\|^4), \label{eq:2nd_variance}
\end{align}
where the first inequality follows from the fact that $\E[\|\xi - \E[\xi]\|^2]=\E[\| \xi\|^2] - \|\E[\xi]\|^2\leq \E[\|\xi\|^2]$, and the last inequality follows from Lemma~\ref{lips-Hess_lemma} with $z = w_{t-1}$ and $w = \Tilde{w}$.

%
%
%
%
%
%

The inequality \eqref{eq:svrg_variance} and \eqref{eq:2nd_variance} suggests that the variance of $v_t^{2nd}$ is smaller than that of $v_t^{svrg}$ when $w_{t-1}$ is close to $\Tilde{w}$.
To reduce the computational cost, Algorithm~\ref{alg:Algorithm_1} exploits $\Tilde{A}^{k-1}$ and $\Tilde{A}^{k-1}_{i_t}$ as $\nabla F(w_{k-1})$ and $\nabla F_{i_t}(w_{k-1})$. However, we do not have the variance bound~\eqref{eq:svrg_variance} for arbitrary $\Tilde{A}^{k-1}$ and $\Tilde{A}^{k-1}_{i_t}$.

Thus we suppose the following assumption on $\Tilde{A}^{k-1}$ to prove the linear convergence of Algorithm~\ref{alg:Algorithm_1}.

\begin{asm}\label{alphaassumtion}
There exists a positive constant $\alpha$ such that for all $t$ and $k$,
\begin{equation}
    \E\left[ \|{(\isgd{w_{t-1}} - \isgd{\Tilde{w}} -\iamtk (w_{t-1}-\Tilde{w}) )}\|^2 \right] \leq \alpha 
    \E \left[ \|\isgd{w_{t-1}} - \isgd{\Tilde{w}} \|^2 \right]
\end{equation}
\end{asm}
\noindent Note that, due to~\eqref{eq:2nd_variance}, SVRG-2 satisfies Assumption~\ref{alphaassumtion} when $w_{t-1}$ is close to $\Tilde{w}$.

The next lemma gives the bound of the gradient difference, and it is useful for further analysis.
\begin{lemma}\label{svrg_lipschitz}
~{\rm\cite{SVRG_NIPS2013}}~ Suppose that Assumption \ref{strongly convex} holds. Let $ w_* = \min_w f_i(w) - f(w_*)  - \g f_i(w_*)^T (w - w_*)$. Then
\begin{equation}
    \frac{1}{n} \sum_{i=1}^{n} \|\isgd{w}- \isgd{w_*}\|^2 \leq 2L [F(w) - F(w_*)].
\end{equation}
\end{lemma}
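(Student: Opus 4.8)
The plan is to reduce the statement to the standard ``co-coercivity at the optimum'' estimate for smooth convex functions, applied to the shifted functions
\[
 h_i(w) := f_i(w) - f_i(w_*) - \g f_i(w_*)^T(w - w_*), \qquad i = 1,\dots,n .
\]
First I would record the elementary properties of $h_i$. Since each $f_i$ is $\mu_i$-strongly convex (Assumption~\ref{strongly convex}) it is in particular convex, so each $h_i$ is convex; since $\g f_i$ is $L$-Lipschitz and $\g h_i(w) = \g f_i(w) - \g f_i(w_*)$, the gradient $\g h_i$ is also $L$-Lipschitz; and $\g h_i(w_*) = 0$ with $h_i(w_*) = 0$, so $w_*$ is the global minimizer of $h_i$ and $\min_w h_i(w) = 0$. (In passing I would note that the definition of $w_*$ in the statement should be read this way: $w_*$ is the minimizer of $F$, and $h_i$ is the associated shifted function above.)

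Next I would prove the key inequality: for a convex function $g$ with $L$-Lipschitz gradient and global minimizer $x^*$,
\[
 g(x) - g(x^*) \ \ge\ \frac{1}{2L}\,\|\g g(x)\|^2 \qquad \text{for all } x \in \R^d .
\]
This is the one step with real content. It follows by applying the descent lemma (the standard consequence of $L$-smoothness) at the point $x^{+} := x - \tfrac{1}{L}\g g(x)$, which gives $g(x^{+}) \le g(x) - \tfrac{1}{2L}\|\g g(x)\|^2$, and then using $g(x^*) \le g(x^{+})$ because $x^*$ minimizes $g$. Applying this with $g = h_i$ and $x^* = w_*$ yields $\tfrac{1}{2L}\|\g f_i(w) - \g f_i(w_*)\|^2 \le h_i(w)$ for each $i$.

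Finally I would average over $i$ and identify the right-hand side. Summing the last inequality over $i = 1,\dots,n$ and dividing by $n$,
\[
 \frac{1}{2L}\cdot\frac{1}{n}\sum_{i=1}^{n} \|\g f_i(w) - \g f_i(w_*)\|^2 \ \le\ \frac{1}{n}\sum_{i=1}^{n} h_i(w) = F(w) - F(w_*) - \g F(w_*)^T(w - w_*) .
\]
Since $F$ is strongly convex and $w_*$ is its minimizer, $\g F(w_*) = 0$, so the right-hand side equals $F(w) - F(w_*)$; multiplying through by $2L$ gives the claim. The main obstacle is the smooth-convex inequality in the second paragraph; the rest is bookkeeping with the definition of $h_i$ and the averaging identity.
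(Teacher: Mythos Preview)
Your argument is correct and is precisely the standard proof from Johnson and Zhang~\cite{SVRG_NIPS2013}: introduce the shifted functions $h_i$, apply the $L$-smooth descent inequality to get $h_i(w)\ge \tfrac{1}{2L}\|\g f_i(w)-\g f_i(w_*)\|^2$, average, and use $\g F(w_*)=0$. The paper does not supply its own proof of this lemma---it simply cites~\cite{SVRG_NIPS2013}---so there is nothing to compare beyond noting that your derivation reproduces the original source and also correctly untangles the garbled ``Let $w_*=\min_w\ldots$'' in the statement.
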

\noindent Under Assumptions \ref{strongly convex} and \ref{alphaassumtion}, we provide an upper bound on the variance.
\begin{lemma}\label{lem:variance_bnd}
Suppose that Assumptions \ref{strongly convex} and \ref{alphaassumtion} hold. Let $v_t$ be
\begin{equation}
    v_t = (\isgd{w_{t-1}} - \isgd{\Tilde{w}} + \tig) {+(\am - \iamtk) (w_{t-1}-\Tilde{w})}
    \label{searchdirection}
\end{equation}
Then variance  
$\E[\|v_t - \fulg{w_{t-1}} \|^2]$ is 
bounded by $4\alpha L (F(w_{t-1}) -  F(w_*)  + F(\Tilde{w}) -  F(w_*))$.
\end{lemma}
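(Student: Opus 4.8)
The plan is to mimic the variance bound from the original SVRG analysis, but with the extra second-order correction term carried along. Write $v_t - \fulg{w_{t-1}} = \bigl(\isgd{w_{t-1}} - \isgd{\Tilde{w}} - \iamtk(w_{t-1}-\Tilde{w})\bigr) - \E\bigl[\isgd{w_{t-1}} - \isgd{\Tilde{w}} - \iamtk(w_{t-1}-\Tilde{w})\bigr]$, using that $\E[\iamtk] = \am$, $\E[\isgd{\cdot}] = \fulg{\cdot}$, and $\tig = \fulg{\Tilde{w}}$, so that the mean of the bracketed quantity is exactly $\fulg{w_{t-1}}$. First I would apply the inequality $\E[\|\xi - \E[\xi]\|^2] \le \E[\|\xi\|^2]$ (already used in \eqref{eq:svrg_variance} and \eqref{eq:2nd_variance}) to drop the subtracted mean, leaving $\E[\|v_t - \fulg{w_{t-1}}\|^2] \le \E[\|\isgd{w_{t-1}} - \isgd{\Tilde{w}} - \iamtk(w_{t-1}-\Tilde{w})\|^2]$.

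Next I would invoke Assumption~\ref{alphaassumtion} directly to bound this by $\alpha\, \E[\|\isgd{w_{t-1}} - \isgd{\Tilde{w}}\|^2]$. Then, to connect with the function-value gap, I would insert $\isgd{w_*}$ and use the elementary inequality $\|a+b\|^2 \le 2\|a\|^2 + 2\|b\|^2$ to split $\E[\|\isgd{w_{t-1}} - \isgd{\Tilde{w}}\|^2] \le 2\,\E[\|\isgd{w_{t-1}} - \isgd{w_*}\|^2] + 2\,\E[\|\isgd{\Tilde{w}} - \isgd{w_*}\|^2]$. Applying Lemma~\ref{svrg_lipschitz} to each of the two terms (recalling that $\E[\cdot]$ over $i_t$ uniform is the average $\frac1n\sum_i$) gives $2 \cdot 2L[F(w_{t-1}) - F(w_*)] + 2\cdot 2L[F(\Tilde{w}) - F(w_*)]$, and multiplying through by $\alpha$ yields the claimed bound $4\alpha L\bigl(F(w_{t-1}) - F(w_*) + F(\Tilde{w}) - F(w_*)\bigr)$.

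The only real subtlety — and the step I'd be most careful about — is the first one: justifying that the mean of $\isgd{w_{t-1}} - \isgd{\Tilde{w}} - \iamtk(w_{t-1}-\Tilde{w})$ (with respect to the random index $i_t$, conditioned on $w_{t-1}$ and $\Tilde{w}$) is precisely $\fulg{w_{t-1}}$, so that $v_t - \fulg{w_{t-1}}$ genuinely is the centered version of that quantity. This needs $w_{t-1}$ and $\Tilde{w}$ (hence $\am$) to be measurable with respect to the conditioning, which holds since $\am$ is computed at the start of the epoch and $i_t$ is drawn fresh at iteration $t$; the additive term $\am(w_{t-1}-\Tilde{w})$ is deterministic given the conditioning and cancels against itself. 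Everything after that is the routine chain of $\|a+b\|^2 \le 2\|a\|^2 + 2\|b\|^2$ and Lemma~\ref{svrg_lipschitz}, exactly paralleling the original SVRG proof.
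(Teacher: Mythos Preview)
Your proposal is correct and follows essentially the same route as the paper: center the direction, apply $\E\|\xi-\E\xi\|^2\le\E\|\xi\|^2$, invoke Assumption~\ref{alphaassumtion}, split via $w_*$ using $\|a+b\|^2\le 2\|a\|^2+2\|b\|^2$, and finish with Lemma~\ref{svrg_lipschitz}. One small slip in your write-up: the mean of $\isgd{w_{t-1}} - \isgd{\Tilde{w}} - \iamtk(w_{t-1}-\Tilde{w})$ is $\fulg{w_{t-1}} - \tig - \am(w_{t-1}-\Tilde{w})$, not $\fulg{w_{t-1}}$; but your displayed identity $v_t - \fulg{w_{t-1}} = \xi - \E[\xi]$ is correct regardless, so the argument goes through unchanged.
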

\begin{proof}
Recall that $v_t$ is an unbiased estimate of 
$\fulg{w_{t-1}}$, i.e., $\E[v_t] = \fulg{w_{t-1}}$.
Then its variance can be calculated as
\begin{align*}
    &\E[\|v_t - \fulg{w_{t-1}} \|^2] \\
    & =  \E[\|{(\isgd{w_{t-1}} - \isgd{\Tilde{w}} + \tig)} -\iamtk(w_{t-1}-\Tilde{w})+\am (w_{t-1} -\Tilde{w}) - 
    \fulg{w_{t-1}}\|^2] \nonumber\\
    & = \E[\|{(\isgd{w_{t-1}} - \isgd{\Tilde{w}} -\iamtk (w_{t-1}-\Tilde{w}) )} \nonumber\\
    & \qquad \qquad \qquad \qquad \qquad \qquad \qquad \qquad - \E[ {
    \isgd{w_{t-1}} - \isgd{\Tilde{w}} -\iamtk (w_{t-1}-\Tilde{w})}]\|^2]\\
    & \leq \E[\|{(\isgd{w_{t-1}} - \isgd{\Tilde{w}} -\iamtk (w_{t-1}-\Tilde{w}) )}\|^2].
\end{align*}
Under the Assumption~\ref{alphaassumtion}, it is easy to see that we have
\begin{align}
       \E[\|v_t - \fulg{w_{t-1}} \|^2] & \leq \alpha \E[\|\isgd{w_{t-1}} - \isgd{\Tilde{w}}\|^2] \nonumber\\
       & \leq \alpha (2\E[\|\isgd{w_{t-1}} - \isgd{w_*} \|^2] + 2\E[\| \isgd{\Tilde{w}} - \isgd{w_*}\|^2])\nonumber\\
       & \leq 4\alpha L (F(w_{t-1}) -  F(w_*)  + F(\Tilde{w}) -  F(w_*)).\label{variance}
\end{align}
where the last inequality follows from the Lemma~\ref{svrg_lipschitz}.
\end{proof} 

Now we proceed to prove the main result. The proof is similar to ~\cite{SVRG_NIPS2013}. For the completeness, we give the proof.
\begin{theorem}\label{thm:fun_linear}
Consider Algorithm~\ref{alg:Algorithm_1} with option-$\rm {2}$. Suppose that Assumptions~\ref{strongly convex} and \ref{alphaassumtion} hold. Let $w_*$ be the optimal solution of the problem~\eqref{objectivefun}, and $m$ is
sufficiently large so that 
\begin{equation*}
    \beta = \left[ \frac{1}{\mu \eta (1 - \eta L (2\alpha + 1))m} + 
    \frac{2 L \eta \alpha}{1 - \eta L (2\alpha + 1)}
    \right] < 1,
\end{equation*}
then we have linear convergence in expectation:
$$\E[F(\xsk) - F(w_*)] \leq \beta^k~\E[F(\Tilde{w}_{0}) - F(w_*)].$$
\end{theorem}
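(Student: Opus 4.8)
The plan is to mirror the classical SVRG analysis of Johnson--Zhang, substituting the variance estimate of Lemma~\ref{lem:variance_bnd} for the original one. Fix an epoch index $k$, set $\Tilde{w} = \Tilde{w}_{k-1}$, and inside the inner loop write $w_t = w_{t-1} - \eta v_t$ with $v_t$ as in~\eqref{searchdirection}. Since $\E[\Tilde{A}^{k-1}_{i_t}]=\Tilde{A}^{k-1}$ and $\E[\isgd{w}]=\fulg{w}$, the direction is conditionally unbiased, $\E[v_t\mid w_{t-1}]=\fulg{w_{t-1}}$. First I would expand one step:
\begin{equation*}
\E\big[\|w_t - w_*\|^2 \mid w_{t-1}\big] = \|w_{t-1}-w_*\|^2 - 2\eta\,(w_{t-1}-w_*)^T\fulg{w_{t-1}} + \eta^2\,\E\big[\|v_t\|^2 \mid w_{t-1}\big].
\end{equation*}

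Next I would bound the second moment of $v_t$. By unbiasedness, $\E[\|v_t\|^2] = \E[\|v_t - \fulg{w_{t-1}}\|^2] + \|\fulg{w_{t-1}}\|^2$; Lemma~\ref{lem:variance_bnd} controls the variance term by $4\alpha L\big(F(w_{t-1}) - F(w_*) + F(\Tilde{w}) - F(w_*)\big)$, while $L$-smoothness~\eqref{smoothness} and convexity of $F$ (with $\fulg{w_*}=0$) give $\|\fulg{w_{t-1}}\|^2 \le 2L\big(F(w_{t-1}) - F(w_*)\big)$. Using also the convexity inequality $(w_{t-1}-w_*)^T\fulg{w_{t-1}} \ge F(w_{t-1}) - F(w_*)$ for the cross term, the one-step identity becomes, after taking full expectations,
\begin{equation*}
\E\big[\|w_t - w_*\|^2\big] \le \E\big[\|w_{t-1}-w_*\|^2\big] - 2\eta\big(1 - \eta L(2\alpha+1)\big)\E\big[F(w_{t-1}) - F(w_*)\big] + 4\eta^2\alpha L\,\E\big[F(\Tilde{w}_{k-1}) - F(w_*)\big].
\end{equation*}

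Then I would sum over $t=1,\dots,m$: the $\|w_t - w_*\|^2$ terms telescope, $w_0 = \Tilde{w}_{k-1}$ so $\|w_0 - w_*\|^2 \le \tfrac{2}{\mu}\big(F(\Tilde{w}_{k-1}) - F(w_*)\big)$ by Lemma~\ref{lemma-str-con-fe}, and discarding the nonnegative $\E[\|w_m - w_*\|^2]$ yields
\begin{equation*}
2\eta\big(1 - \eta L(2\alpha+1)\big)\sum_{t=1}^{m}\E\big[F(w_{t-1}) - F(w_*)\big] \le \Big(\tfrac{2}{\mu} + 4m\eta^2\alpha L\Big)\,\E\big[F(\Tilde{w}_{k-1}) - F(w_*)\big].
\end{equation*}
Under option~2, $\E[F(\xsk) - F(w_*)] = \tfrac{1}{m}\sum_{t=1}^{m}\E[F(w_{t-1}) - F(w_*)]$, so dividing by $2\eta(1-\eta L(2\alpha+1))m$ produces exactly $\E[F(\xsk) - F(w_*)] \le \beta\,\E[F(\Tilde{w}_{k-1}) - F(w_*)]$ with the $\beta$ stated in the theorem; iterating over $k=1,2,\dots$ back to $\Tilde{w}_0$ finishes the argument, the hypothesis on $m$ ensuring $\beta<1$.

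The delicate step is the second-moment bound on $v_t$: one must check that the cross term in $\|v_t\|^2$ genuinely vanishes under the conditional expectation — this is precisely where the unbiasedness $\E[\Tilde{A}^{k-1}_{i_t}]=\Tilde{A}^{k-1}$ is needed — and that combining the Lemma~\ref{lem:variance_bnd} estimate with the gradient-norm bound preserves the coefficient of $F(w_{t-1})-F(w_*)$ in the exact form $2L(2\alpha+1)$, so that $1-\eta L(2\alpha+1)>0$ remains the governing smallness condition appearing in $\beta$. Everything after that — the telescoping sum, the use of strong convexity (Assumption~\ref{strongly convex}, Lemma~\ref{lemma-str-con-fe}) for $\|\Tilde{w}_{k-1}-w_*\|^2$, and the option-2 averaging — is routine.
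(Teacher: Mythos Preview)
Your proposal is correct and follows essentially the same route as the paper's own proof: bound $\E[\|v_t\|^2]$ by splitting off the variance term (handled by Lemma~\ref{lem:variance_bnd}) and the squared gradient term (handled by $L$-smoothness/Lemma~\ref{svrg_lipschitz}), plug into the one-step expansion using convexity for the cross term, telescope over $t$, apply Lemma~\ref{lemma-str-con-fe} at $w_0=\Tilde{w}_{k-1}$, and invoke option~2. The only cosmetic difference is that the paper writes $\|\fulg{w_{t-1}}-\fulg{w_*}\|^2$ and cites Lemma~\ref{svrg_lipschitz}, whereas you write $\|\fulg{w_{t-1}}\|^2$ and cite smoothness plus $\fulg{w_*}=0$; these are the same quantity and the same bound.
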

\begin{proof}
Let $v_t = (\isgd{w_{t-1}} - \isgd{\Tilde{w}} + \tig) {+(\am - \iamtk) (w_{t-1}-\Tilde{w})}$ be the search direction in the $k$-th epoch of Algorithm~\ref{alg:Algorithm_1}. Then,
\begin{align}
    \E[\|v_t\|^2] & = \E[\|v_t - \fulg{w_{t-1}} +\fulg{w_{t-1}} - \fulg{w_*}\|^2] \nonumber \\
    & = \E[\|v_t - \fulg{w_{t-1}}\|^2] + \E[\|\fulg{w_{t-1}} - \fulg{w_*}\|^2] \nonumber \\
    & \leq 4\alpha L (F(w_{t-1}) -  F(w_*)  + F(\Tilde{w}) -  F(w_*)) + 2L [F(w_{t-1}) - F(w_*)] \\
    & = 2L (2\alpha + 1) [F(w_{t-1}) - F(w_*)] + 4 \alpha L  [F(\Tilde{w}) - F(w_*)] \nonumber\\
    & = 2L [ (2\alpha + 1) (F(w_{t-1}) - F(w_*)) + 2\alpha (F(\Tilde{w}) - F(w_*)) ], \label{vbound}
\end{align}
where the first inequality follows from the Lemma~\ref{lem:variance_bnd} and Lemma~\ref{svrg_lipschitz}.


Now we bound the distance of $w_t$ to $w_*$.
\begin{align}
   & \E[\|w_{t} -   w_*\|^2] \nonumber\\
   & = \E[\|w_{t-1} - \eta v_t - w_*\|^2] \nonumber\\
    & = \|w_{t-1} - w_*\|^2 - 2 \eta~\E((w_{t-1} - w_*)^T v_t) + \eta^2~\E[\|v_t\|^2] 
    \nonumber\\
    & = \|w_{t-1} - w_*\|^2 - 2 \eta (w_{t-1} - w_*)^T \fulg{w_{t-1}} + \eta^2 
    \E[\|v_t\|^2] \nonumber\\
    & \leq \|w_{t-1} - w_*\|^2  -2\eta [F(w_{t-1}) - F(w_*)] \nonumber\\
    & \qquad \qquad \qquad  + 2 \eta^2 L [(2\alpha 
    + 1)~(F(w_{t-1}) - F(w_*)) + 2\alpha~(F(\Tilde{w}) - F(w_*))]\nonumber \\
    & = \|w_{t-1} - w_*\|^2 - 2 \eta (1 - \eta L (2\alpha + 1))~[F(w_{t-1}) - 
    F(w_*)] \nonumber\\
    & \qquad \qquad \qquad \qquad \qquad \qquad \qquad \qquad \qquad \quad
    + 4 \alpha \eta^2 L ~[F(\Tilde{w}) - F(w_*)], \label{linear_x}
\end{align} 
where the first inequality is  obtained by the convexity of $F$.
By summing the previous  inequality over $t = 0,1,\ldots,m-1$ and taking expectation with all the history using
option $\rm {2}$ at epoch $k$, we have 
\begin{align*}
    \E[\|w_m - w_*\|^2] + 2 \eta (1 - \eta L (2\alpha + 1)) & m~\E[F(\xsk) - F(w_*)] \\
    &\leq \E[\|w_0 - w_*\|^2] + 4 \alpha \eta^2 L m~\E[F(\Tilde{w}) - F(w_*)]
\end{align*}
and
\begin{align*}
    & \E[\|\Tilde{w} - w_*\|^2] + 4 L m \eta^2 \alpha~\E[F(\Tilde{w}) - F(w_*)] \\
    &  \leq \frac{2}{\mu} \E[F(\Tilde{w}) - F(w_*)] +  4 \alpha \eta^2 L m~\E[F(\Tilde{w}) - F(w_*)] \\
    &  = 2 \left( \frac{1}{\mu} + 2 L m \eta^2 \alpha \right) ~\E[F(\Tilde{w}) - F(w_*)],
\end{align*}
where the second inequality uses the strong convexity of $F$. Finally, we have

\begin{equation}
    \E[F(\xsk) - F(w_*)] \leq \left[ \frac{1}{\mu \eta (1 - \eta L (2\alpha + 1))m} + 
    \frac{2 L \eta \alpha}{1 - \eta L (2\alpha + 1)}
    \right]~\E[F(\Tilde{w}_{k-1}) - F(w_*)].
\end{equation}

\noindent which implies,
$$
 \E[F(\xsk) - F(w_*)] \leq \beta^k~\E[F(\Tilde{w}_{0}) - F(w_*)]
$$
\noindent with
\begin{equation*}
    \beta = \left[ \frac{1}{\mu \eta (1 - \eta L (2\alpha + 1))m} + 
    \frac{2 L \eta \alpha}{1 - \eta L (2\alpha + 1)}
    \right].
\end{equation*}
\end{proof}

Next we consider Algorithm~\ref{alg:Algorithm_1} with option-$\rm {1}$.
\begin{theorem} \label{Th:svrg-2_opt1}
Consider Algorithm~\ref{alg:Algorithm_1} with option-$\rm {1}$. Suppose that
Assumptions~\ref{strongly convex} and~\ref{alphaassumtion} hold. Let $w_*$ be
the optimal solution of the problem~\eqref{objectivefun}, and
\begin{equation}\label{Th:svrg2_coef}
   \gamma = \left[(1-2 \eta \mu(1 - \eta L (2\alpha + 1)))^m + \frac{2\alpha \eta L^2}{\mu(1 - \eta L (2\alpha + 1))}
 \right].
\end{equation}
Then for $k$-th epoch, we have 
\begin{equation}
    \E\|\Tilde{w}_k - w_*\|^2 \leq \gamma\ \|\Tilde{w}_{k-1} - w_*\|^2.
\end{equation}
\end{theorem}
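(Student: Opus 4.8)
The plan is to re-use, almost verbatim, the per-iteration estimate \eqref{linear_x} established inside the proof of Theorem~\ref{thm:fun_linear}, and then, rather than averaging over the inner counter $t$ as was done there for option-$\rm {2}$, to iterate that estimate \emph{geometrically} over $t=1,\dots,m$. Fix an epoch $k$ and condition on the history up to its start; then $\Tilde{w}=\Tilde{w}_{k-1}$ is deterministic, $w_0=\Tilde{w}_{k-1}$, and option-$\rm {1}$ means $\Tilde{w}_k=w_m$. Under Assumptions~\ref{strongly convex} and~\ref{alphaassumtion}, inequality \eqref{linear_x} gives, for every $t$,
\begin{equation*}
\E[\|w_t-w_*\|^2]\le \|w_{t-1}-w_*\|^2-2\eta\bigl(1-\eta L(2\alpha+1)\bigr)[F(w_{t-1})-F(w_*)]+4\alpha\eta^2L\,[F(\Tilde{w}_{k-1})-F(w_*)].
\end{equation*}

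The first new step is to eliminate the function-value gap $F(w_{t-1})-F(w_*)$ using $\mu$-strong convexity of $F$ (Lemma~\ref{lemma-str-con-fe}): since $1-\eta L(2\alpha+1)>0$ in the admissible range of $\eta$, replacing $F(w_{t-1})-F(w_*)$ by its lower bound converts the middle term into a genuine contraction of $\|w_{t-1}-w_*\|^2$. Writing $A_t:=\E\|w_t-w_*\|^2$ and taking full expectations over the inner loop, one obtains a scalar affine recursion $A_t\le \rho\,A_{t-1}+b$, where $\rho:=1-2\eta\mu(1-\eta L(2\alpha+1))$ is the contraction factor produced by strong convexity and $b:=4\alpha\eta^2L\,[F(\Tilde{w}_{k-1})-F(w_*)]$ is constant throughout epoch $k$. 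Unrolling from $t=0$ to $t=m$ and summing the geometric tail (legitimate because $0<\rho<1$) yields $A_m\le \rho^m A_0+\dfrac{b}{1-\rho}$. Here $A_0=\|w_0-w_*\|^2=\|\Tilde{w}_{k-1}-w_*\|^2$, and the residual term $b/(1-\rho)$ is a multiple of $F(\Tilde{w}_{k-1})-F(w_*)$, which we finally convert back to distance by $L$-smoothness \eqref{smoothness} together with $\g F(w_*)=0$, i.e.\ $F(\Tilde{w}_{k-1})-F(w_*)\le\tfrac{L}{2}\|\Tilde{w}_{k-1}-w_*\|^2$. Collecting the two resulting multiples of $\|\Tilde{w}_{k-1}-w_*\|^2$ gives $\E\|\Tilde{w}_k-w_*\|^2\le\gamma\,\|\Tilde{w}_{k-1}-w_*\|^2$ with $\gamma$ of the form \eqref{Th:svrg2_coef}; matching the precise numerical constants in \eqref{Th:svrg2_coef} is then a routine bookkeeping exercise, since the same constants $\mu,L,\alpha$ already appear in both terms of the recursion.

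Because all the heavy lifting --- unbiasedness of $v_t$, the variance bound of Lemma~\ref{lem:variance_bnd}, the descent inequality using Lemma~\ref{svrg_lipschitz}, and hence \eqref{linear_x} --- is already available, the only genuinely new ingredients are (i) keeping the recursion instead of telescoping over $t$ and (ii) summing the geometric series. I expect the one real point of care to be the step size: one must verify that $\rho=1-2\eta\mu(1-\eta L(2\alpha+1))\in(0,1)$, so that the geometric sum converges and $\rho^m\to 0$ as $m$ grows; this is the option-$\rm {1}$ analogue of the condition $\beta<1$ in Theorem~\ref{thm:fun_linear}, and it forces $\eta$ small enough that $1-\eta L(2\alpha+1)>0$. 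A secondary but essential bookkeeping point is the conditioning: once the pre-epoch-$k$ history is fixed, $\Tilde{w}_{k-1}$, and hence $F(\Tilde{w}_{k-1})-F(w_*)$, is deterministic, which is exactly what allows it to be pulled out of the geometric sum and traded for $\|\Tilde{w}_{k-1}-w_*\|^2$; unlike the option-$\rm {2}$ proof, there is no randomly selected inner iterate to account for here.
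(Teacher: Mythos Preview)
Your proposal follows essentially the same route as the paper: start from the per-iteration bound \eqref{linear_x}, lower-bound $F(w_{t-1})-F(w_*)$ by a multiple of $\|w_{t-1}-w_*\|^2$ via strong convexity to obtain a one-step contraction, iterate it geometrically over $t=1,\dots,m$, and sum the series. The only cosmetic difference is that the paper upper-bounds the anchor term $F(\Tilde{w})-F(w_*)$ by a multiple of $\|\Tilde{w}-w_*\|^2$ \emph{before} setting up the recursion (via convexity and Lipschitz continuity of $\nabla F$), whereas you carry it through the recursion as a constant and convert \emph{afterwards} via $L$-smoothness; since that quantity is fixed during the epoch, the two orderings are equivalent. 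One small bookkeeping point: Lemma~\ref{lemma-str-con-fe} gives only $F(w_{t-1})-F(w_*)\ge\tfrac{\mu}{2}\|w_{t-1}-w_*\|^2$, which produces $\rho=1-\eta\mu(1-\eta L(2\alpha+1))$ rather than the $\rho=1-2\eta\mu(1-\eta L(2\alpha+1))$ you wrote; the paper obtains the extra factor of $2$ by passing instead through the strong-convexity inequality $\nabla F(w_{t-1})^T(w_{t-1}-w_*)\ge\mu\|w_{t-1}-w_*\|^2$.
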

\begin{proof}
We bound the distance of $w_t$ to $w_*$ using ~\eqref{linear_x},
\begin{align}
 & \E[\|w_{t} -   w_*\|^2] \nonumber\\
 & = \|w_{t-1} - w_*\|^2 - 2 \eta (1 - \eta L (2\alpha + 1))~[F(w_{t-1}) - 
    F(w_*)] + 4 \alpha \eta^2 L ~[F(\Tilde{w}) - F(w_*)] \nonumber\\
 & \leq  \|w_{t-1} - w_*\|^2 - 2 \eta (1 - \eta L (2\alpha + 1))~[\g F(w_{t-1})^T(w_{t-1} - w_*)]   \nonumber\\
 & \qquad \qquad \qquad \qquad\hspace{6cm}+ 4 \alpha \eta^2 L ~[\g F(\Tilde{w})^T(\Tilde{w} - w_*)] \nonumber\\
 & \leq \|w_{t-1} - w_*\|^2 - 2 \eta \mu(1 - \eta L (2\alpha + 1))\|w_{t-1} - w_*\|^2 +
 4 \alpha \eta^2 L^2 ~\|\Tilde{w} - w_*\|^2 \nonumber\\
 & = (1-2 \eta \mu(1 - \eta L (2\alpha + 1)))\|w_{t-1} - w_*\|^2 + 4 \alpha \eta^2 L^2 ~\|\Tilde{w} - w_*\|^2, \label{eq:linear_w}
\end{align}
where the first inequality follows from the convexity of $F$,
the second inequality follows from the strong convexity of $F$ and the Lipschitz continuity of $\nabla F$.

By applying the above inequality over the $t$ and
$\Tilde{w}_{k-1} = w_0$ and $\Tilde{w}_k = w_m$, we have
 \begin{align}
 & \E[\|\Tilde{w}_{k} -   w_*\|^2] \nonumber\\
 &\leq [1-2 \eta \mu(1 - \eta L (2\alpha + 1))]^m\|\Tilde{w}_{k-1} - w_*\|^2\nonumber\\
 & \qquad \qquad \qquad \qquad \hspace*{2cm}+ 4 \alpha \eta^2 L^2 ~
 \sum_{j=1}^{m}[1-2 \eta \mu(1 - \eta L (2\alpha + 1))]^j
 \|\Tilde{w}_{k-1} - w_*\|^2 \nonumber\\
 & < \left[(1-2 \eta \mu(1 - \eta L (2\alpha + 1)))^m + \frac{2\alpha \eta L^2}{\mu(1 - \eta L (2\alpha + 1))}
 \right] \|\Tilde{w}_{k-1} - w_*\|^2 \nonumber\\
 & =\gamma\  \|\Tilde{w}_{k-1} - w_*\|^2 \nonumber
\end{align}
Hence, the linear convergence follows,
\begin{equation*}
    \E[\|\Tilde{w}_k - w_*\|^2] <  \gamma\  \|\Tilde{w}_{k-1} - w_*\|^2.
\end{equation*}
\end{proof}
Note that $\gamma$ is less than $1$ when the stepsize $\eta $ is sufficiently small. Then $\Tilde{w}$ converges $w_t$ linearly. 

\noindent Next, we show that Assumption~\ref{alphaassumtion} holds for the SVRG-2, SVRG-2D, and the SVRG-2BB variants, that is, these methods converge linearly. 
For the rest of analysis, we consider $\nabla^2 F_i$ is to be Lipschitz continuous with constant $\Tilde{L}_i$ such that $\Tilde{L} = \max_i \Tilde{L}_i$ and $\mu = \min_i \mu_i$ for strong convexity of $F$.
\begin{lemma}
Suppose that there exists $M$ such that $\|w_{t-1} - \Tilde{w}\|^2 \leq M.$ Suppose also that $\nabla^2 F_i$ is Lipschitz continuous with constant $\Tilde{L}_i$. Then Assumption~\ref{alphaassumtion} holds with 
\begin{equation*}
     \alpha = \frac{\Tilde{L}^{2}}{4\mu^2}M
\end{equation*}
for the SVRG-2, where $\Tilde{L} = \max_i \Tilde{L}_i$ and $\mu = \min_i \mu_i$.
\end{lemma}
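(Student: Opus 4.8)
The plan is a direct chaining of Lemma~\ref{lips-Hess_lemma} (Lipschitz Hessian) with Lemma~\ref{lem:Grad_Hess_ineq} (strong convexity). For SVRG-2 we have $\iamtk = \g^2 f_{i_t}(\Tilde{w})$ with $\Tilde{w} = \Tilde{w}_{k-1}$, so the quantity appearing on the left-hand side of Assumption~\ref{alphaassumtion} is exactly $\E[\|\isgd{w_{t-1}} - \isgd{\Tilde{w}} - \g^2 f_{i_t}(\Tilde{w})(w_{t-1}-\Tilde{w})\|^2]$. First I would bound this from above: apply Lemma~\ref{lips-Hess_lemma} pointwise (with $w = w_{t-1}$, $z = \Tilde{w}$) to obtain $\|\isgd{w_{t-1}} - \isgd{\Tilde{w}} - \g^2 f_{i_t}(\Tilde{w})(w_{t-1}-\Tilde{w})\| \le \frac{\Tilde{L}_t}{2}\|w_{t-1}-\Tilde{w}\|^2 \le \frac{\Tilde{L}}{2}\|w_{t-1}-\Tilde{w}\|^2$, using $\Tilde{L} = \max_i \Tilde{L}_i$. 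Squaring gives a bound of $\frac{\Tilde{L}^2}{4}\|w_{t-1}-\Tilde{w}\|^4$, and then the hypothesis $\|w_{t-1}-\Tilde{w}\|^2 \le M$ lets me pull out one factor, leaving $\frac{\Tilde{L}^2}{4}M\|w_{t-1}-\Tilde{w}\|^2$. Taking expectations preserves this.

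Next I would bound the right-hand side of Assumption~\ref{alphaassumtion} from below. By Lemma~\ref{lem:Grad_Hess_ineq} applied to $f_{i_t}$ (which is $\mu_{i_t}$-strongly convex, hence $\mu$-strongly convex with $\mu = \min_i \mu_i$), we have $\|\isgd{w_{t-1}} - \isgd{\Tilde{w}}\| \ge \mu\|w_{t-1}-\Tilde{w}\|$, so $\E[\|\isgd{w_{t-1}} - \isgd{\Tilde{w}}\|^2] \ge \mu^2\,\E[\|w_{t-1}-\Tilde{w}\|^2]$.

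Combining the two displays yields
\[
\E[\|\isgd{w_{t-1}} - \isgd{\Tilde{w}} - \iamtk(w_{t-1}-\Tilde{w})\|^2] \le \frac{\Tilde{L}^2}{4}M\,\E[\|w_{t-1}-\Tilde{w}\|^2] \le \frac{\Tilde{L}^2}{4\mu^2}M\,\E[\|\isgd{w_{t-1}} - \isgd{\Tilde{w}}\|^2],
\]
which is Assumption~\ref{alphaassumtion} with $\alpha = \frac{\Tilde{L}^2}{4\mu^2}M$, uniformly in $t$ and $k$. There is no real obstacle here beyond bookkeeping; the only point requiring care is that the argument genuinely needs the a priori bound $M$ on the squared displacement $\|w_{t-1}-\Tilde{w}\|^2$ (since Lemma~\ref{lips-Hess_lemma} produces a fourth power while Lemma~\ref{lem:Grad_Hess_ineq} only controls a second power), and the uniformization over $i$ via $\Tilde{L} = \max_i \Tilde{L}_i$ and $\mu = \min_i \mu_i$.
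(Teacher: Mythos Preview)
Your proposal is correct and follows essentially the same route as the paper: both chain Lemma~\ref{lips-Hess_lemma} (to get the $\frac{\Tilde{L}}{2}\|w_{t-1}-\Tilde{w}\|^2$ upper bound) with the strong-convexity lower bound $\mu\|w_{t-1}-\Tilde{w}\| \le \|\isgd{w_{t-1}} - \isgd{\Tilde{w}}\|$, then absorb one factor of $\|w_{t-1}-\Tilde{w}\|^2$ via the hypothesis $M$ and uniformize over $i$. One tiny bookkeeping fix: in invoking Lemma~\ref{lips-Hess_lemma} the Hessian is evaluated at the first argument, so the correct substitution is $w=\Tilde{w}$, $z=w_{t-1}$ (not the reverse), which after a sign flip gives exactly the inequality you wrote.
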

\begin{proof}
Note that the SVRG-2 uses the original Hessian, i.e., $\iamtk = \g^2 f_{i_t}(\Tilde{w}_{k-1})$. Then from Lemma~\ref{lips-Hess_lemma}, we have
\begin{equation}
        \|{(\isgd{w_{t-1}} - \isgd{\Tilde{w}} -\g^2 f_{i_t}(\Tilde{w}_{k-1}) (w_{t-1}-\Tilde{w}) )}\| \leq 
        \frac{\Tilde{L}_{i_t}}{2}\|w_{t-1} - \Tilde{w}\|^2.
\end{equation}
Moreover the strong convexity of $f_t$ with modules $\mu_t$ implies
\begin{equation*}
    \mu_{i_t} \|w_{t-1} - \Tilde{w}\| \leq \|\isgd{w_{t-1}} - \isgd{\Tilde{w}}\|.
\end{equation*}
From the above inequalities we have
\begin{align}
        \E[\|(\isgd{w_{t-1}} - \isgd{\Tilde{w}} &  -\g^2 f_{i_t}(\Tilde{w}_{k-1} (w_{t-1}-\Tilde{w}) )\|^2] \nonumber \\
       &  \qquad \leq \left[\frac{\Tilde{L}^2_{i_t}}{4\mu^2_{i_t}}\|w_{t-1} - \Tilde{w}\|^2 \right] \E[\|\isgd{w_{t-1}} 
       - \isgd{\Tilde{w}}\|^2] \\
       & \qquad \leq \left[\frac{\Tilde{L}^2_{i_t}}{4\mu^2_{i_t}}M \right] \E[\|\isgd{w_{t-1}} 
       - \isgd{\Tilde{w}}\|^2] \\
       & \qquad \leq \left[\frac{\Tilde{L}^2}{4\mu^2}M \right] \E[\|\isgd{w_{t-1}} 
       - \isgd{\Tilde{w}}\|^2] 
\end{align}
\end{proof}
\noindent Now we proceed to see Assumption~\ref{alphaassumtion} for the SVRG-2BB and the SVRG-2D.
\begin{lemma}
For SVRG-2BB and SVRG-2D, Assumption~\ref{alphaassumtion} holds with $\alpha = \frac{4L^2}{\mu}$, where $\mu = \min_i \mu_i$.
\end{lemma}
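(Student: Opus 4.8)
The plan is to handle SVRG-2BB and SVRG-2D simultaneously by observing that in both cases $\iamtk$ is a symmetric matrix whose operator norm is bounded by the gradient-Lipschitz constant $L$. For SVRG-2BB, $\iamtk$ is a scalar multiple of the identity given by \eqref{alg_svrgapp}; a Cauchy--Schwarz estimate of its numerator $s^{k-1,T}(\isgd{\cdot}-\isgd{\cdot})$ together with the $L$-Lipschitz continuity of $\nabla f_{i_t}$ gives $|\iamtk|\le L$. For SVRG-2D, $\iamtk = \text{diagonal}(\g^2 f_{i_t}(\Tilde{w}_{k-1}))$ by \eqref{alg_svrg2_diag}; since $L$-smoothness and strong convexity give $\mu_{i_t}\mathrm{I}\preceq \g^2 f_{i_t}\preceq L\mathrm{I}$, every diagonal entry lies in $[\mu_{i_t},L]$, so again the operator norm is at most $L$. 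In either case $\|\iamtk(w_{t-1}-\Tilde{w})\|\le L\|w_{t-1}-\Tilde{w}\|$, which is the single fact driving the whole estimate.

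First I would apply the elementary inequality $\|a-b\|^2\le 2\|a\|^2+2\|b\|^2$ with $a=\isgd{w_{t-1}}-\isgd{\Tilde{w}}$ and $b=\iamtk(w_{t-1}-\Tilde{w})$ to obtain
\begin{equation*}
\|\isgd{w_{t-1}}-\isgd{\Tilde{w}}-\iamtk(w_{t-1}-\Tilde{w})\|^2 \le 2\|\isgd{w_{t-1}}-\isgd{\Tilde{w}}\|^2 + 2L^2\|w_{t-1}-\Tilde{w}\|^2.
\end{equation*}
Next I would convert the second term back into a gradient difference using the strong-convexity bound of Lemma~\ref{lem:Grad_Hess_ineq}, namely $\mu\|w_{t-1}-\Tilde{w}\|\le \|\isgd{w_{t-1}}-\isgd{\Tilde{w}}\|$ with $\mu=\min_i\mu_i$, so that $\|w_{t-1}-\Tilde{w}\|^2\le \mu^{-2}\|\isgd{w_{t-1}}-\isgd{\Tilde{w}}\|^2$. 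Substituting and taking expectations yields Assumption~\ref{alphaassumtion} with a constant of the form $2+2L^2/\mu^2$, which, using $L\ge\mu$, is dominated by $4L^2/\mu^2$; the two methods share the same constant precisely because both are controlled by the one operator-norm bound $L$.

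The hard part will be matching the \emph{exact} constant $\alpha = 4L^2/\mu$ asserted in the lemma. The clean chain above naturally carries $\mu^2$ (not $\mu$) in the denominator, which is also the dimensionally consistent outcome, since $\alpha$ is a ratio of two quantities with identical units while $L^2/\mu$ is not. Reconciling this therefore appears to require either a normalization in which $L$ and $\mu$ are rescaled, or it is a typographical slip for $4L^2/\mu^2$. I would accordingly present the derivation that produces $4L^2/\mu^2$ and flag the discrepancy, noting that the structure of the argument---operator-norm control of $\iamtk$, the $2\|a\|^2+2\|b\|^2$ split, and the strong-convexity inversion of $\|w_{t-1}-\Tilde{w}\|$---is unaffected by which of the two constants is intended.
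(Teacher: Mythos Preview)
Your approach is essentially the paper's own: bound the operator norm of $\iamtk$ by $L$ for both variants, apply the split $\|a-b\|^2\le 2\|a\|^2+2\|b\|^2$, and then invert $\|w_{t-1}-\Tilde{w}\|$ using strong convexity. The only cosmetic difference is that the paper first bounds \emph{both} summands by $2L^2\|w_{t-1}-\Tilde{w}\|^2$ (using $L$-Lipschitz continuity on the gradient difference as well) to reach $4L^2\|w_{t-1}-\Tilde{w}\|^2$ and then converts the whole thing back via strong convexity, whereas you keep the first summand as $\|\isgd{w_{t-1}}-\isgd{\Tilde{w}}\|^2$ and convert only the second. Either route gives the same estimate.

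Your suspicion about the constant is well founded: the paper's own chain goes from $4L^2\|w_{t-1}-\Tilde{w}\|^2$ to $\tfrac{4L^2}{\mu_{i_t}}\|\isgd{w_{t-1}}-\isgd{\Tilde{w}}\|^2$ in a single step, but the strong-convexity inequality $\mu_{i_t}\|w_{t-1}-\Tilde{w}\|\le\|\isgd{w_{t-1}}-\isgd{\Tilde{w}}\|$ actually yields $\mu_{i_t}^2$ in the denominator after squaring. So the stated $\alpha=4L^2/\mu$ is the same slip you flagged; your derivation producing $4L^2/\mu^2$ is the dimensionally consistent one, and the argument is otherwise identical.
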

\begin{proof}
First, we give the bound of the approximate Hessian obtained by the SVRG-2BB and the SVRG-2D 
separately.
\begin{itemize}
    
\item[i)] Let $\Tilde{A}^k_{i_t}$ obtained by the BB method, i.e.,
 $$
 \Tilde{A}^{k}_{i_t} = \frac{\svct (\g f_{i_t}(\xsk) - \g f_{i_t}(\Tilde{w}_{m-1}))}{\|\svc\|^2},
 $$
where $s^k = \xsk - \Tilde{w}_{k-1}$, $y^k = \g f_{i_t}(\xsk) - \g f_{i_t}(\Tilde{w}_{k-1})$ and $i_t \in \{1, \ldots,n \}$ chosen randomly. From the 
    Lemma~\ref{syl-Lipschitz_inequality}, we have 
\begin{equation}
    |(\g f_i(w) - \g f_i(y))^T(w - y)| \leq \|\g f_i(w) - \g f_i(y)\| \|w-y\| \leq L \|w - y \|^2 
    \label{2.3}
\end{equation}

Therefore, we have $\|\Tilde{A}^k_{i_t}\|\leq L$.

\item[ii)] Let $\Tilde{A}^k_{i_t}$ be obtained by diagonal Hessian. Since $f_i$ has Lipschitz gradient and it is twice 
differentiable, we have
$$ 
\|\g^2 f_i(w)\|_2 \leq L.
$$
Since $\Tilde{A}_{i_t}$ be a diagonal Hessian of the true Hessian, we have $$\|\Tilde{A}_{i_t}\|^2 \leq \| \nabla^2 f_{i_t}(w)\|^2 \leq 
L.$$
\end{itemize}
Consequently, we have $\|\Tilde{A}_{i_t}\|^2 \leq L$ for both the SVRG-2BB and the SVRG-2D. 

Now we proceed to determine the $\alpha$.
\begin{align*}
        & \|{(\isgd{w_{t-1}} - \isgd{\Tilde{w}} -\iamtk (w_{t-1}-\Tilde{w}) )}\|^2 \\
        & \qquad \qquad \leq 2\|\isgd{w_{t-1}} - \isgd{\Tilde{w}}\|^2 +
        2\|A^{k-1}_{i_t} (w_{t-1} - \Tilde{w})\|^2\\
        & \qquad \qquad \leq 2L^2 \|w_{t-1} - \Tilde{w}\|^2 + 2L^2  \|w_{t-1} - \Tilde{w}\|^2 \\
        & \qquad \qquad \leq 4 L^2 \|w_{t-1} - \Tilde{w}\|^2 \\
        & \qquad \qquad \leq \frac{4L^2}{\mu_{i_t}} \|\isgd{w_{t-1}} - \isgd{\Tilde{w}}\|^2\\
        & \qquad \qquad \leq \frac{4L^2}{\mu} \|\isgd{w_{t-1}} - \isgd{\Tilde{w}}\|^2,
\end{align*}
where first inequality uses $\|x + y\|^2 \leq 2 \|x\|^2 + 2 \|y\|^2 $, and the second inequality follows from the Lipschitz continuity of the $\g f_{i_t}$. Therefore, $\alpha = \frac{4L^2}{\mu}$ for the SVRG-2BB and the SVRG-2D.
\end{proof}
We have proved the linear convergence for SVRG-2, for SVRG-2BB, and for SVRG-2D. 
Finally we prove the linear convergence of SVRG-2BBS. Recall that SVRG-2BBS is option-1 of SVRG-2BB  with the BB-step size $\eta^t_k$. Recall also that Theorem~\ref{thm:fun_linear} and \ref{Th:svrg-2_opt1} hold for Algorithm~\ref{alg:Algorithm_1} with the fixed stepsize $\eta$. We can prove the linear convergence of the SVRG-2BBS similar to these theorems.
\begin{theorem}
Suppose that $\xi_t \in (\xi_0,\xi_1)$ where $0<\xi_0 \leq \xi_1$, and assume that $\eta_0$ and $\eta_1$ satisfy $1 - \eta_1 L(2\alpha +1)>0$ and $(1-2\eta_0 \mu (1 - \eta_1 L(2 \alpha +1))) \in [0,1)$.
Then SVRG-2BBS is linearly convergent in expectation:
\begin{equation}
        \E[\|\Tilde{w}_k - w_*\|^2] \leq \Tilde{\gamma}^k \|\Tilde{w_0} - w_* \|^2,
\end{equation}
where 
    \begin{equation*}
    \Tilde{\gamma} = \left[(1-2 \eta_0 \mu(1 - \eta_1 L (2\alpha + 1)))^m + \frac{2\alpha \eta_1^2 L^2}{\eta_0\mu(1 - \eta_1 L (2\alpha + 1))}\right].
    \end{equation*}
\end{theorem}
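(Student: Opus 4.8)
The plan is to reduce the claim to the fixed-step-size analysis already carried out in Theorem~\ref{Th:svrg-2_opt1}. Recall that SVRG-2BBS is exactly option-1 of SVRG-2BB, for which Assumption~\ref{alphaassumtion} holds with $\alpha=4L^{2}/\mu$ (the lemma immediately preceding this theorem), since SVRG-2BBS uses the very same matrices $\Tilde{A}^{k}$ and $\Tilde{A}^{k}_{i_t}$; the only difference is that the constant step size $\eta$ is replaced by the epoch/iterate-dependent BB step size $\eta_{k}^{t}$ of \eqref{eq:svrgbb2s1}. So the whole argument hinges on showing that $\eta_{k}^{t}$ stays in a fixed interval $[\eta_0,\eta_1]$ and then re-running the recursion \eqref{eq:linear_w} with the interval endpoints in place of $\eta$.

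First I would bound $\eta_{k}^{t}$. Writing $s=\Tilde{w}_{k-1}-\Tilde{w}_{k-2}$ and $y=\Tilde{g}_{k-1}-\Tilde{g}_{k-2}=\g F(\Tilde{w}_{k-1})-\g F(\Tilde{w}_{k-2})$, Cauchy--Schwarz together with the $L$-Lipschitz bound \eqref{smoothness} gives $s^{T}y\le L\|s\|^{2}$, while $\mu$-strong convexity of $F$ (a consequence of Assumption~\ref{strongly convex}) gives $s^{T}y\ge\mu\|s\|^{2}$. Hence $\|s\|^{2}/(s^{T}y)\in[1/L,1/\mu]$ and, since $\xi_t\in(\xi_0,\xi_1)$, the step size satisfies $\eta_{k}^{t}\in[\eta_0,\eta_1]$ with $\eta_0=\xi_0/(m_1 L)$ and $\eta_1=\xi_1/(m_1\mu)$. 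The two standing hypotheses $1-\eta_1 L(2\alpha+1)>0$ and $1-2\eta_0\mu(1-\eta_1 L(2\alpha+1))\in[0,1)$ are precisely what is needed to keep the contraction factor in $[0,1)$ below.

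Next I would replay the per-iteration estimate. The derivation leading to \eqref{eq:linear_w} only used convexity of $F$, strong convexity and Lipschitz continuity of $\g F$, Lemma~\ref{lem:variance_bnd} and Lemma~\ref{svrg_lipschitz}, and is valid for an arbitrary step size; applied at inner iteration $t$ with step $\eta_{k}^{t}$ it yields $\E[\|w_t-w_*\|^{2}]\le(1-2\eta_{k}^{t}\mu(1-\eta_{k}^{t}L(2\alpha+1)))\|w_{t-1}-w_*\|^{2}+4\alpha(\eta_{k}^{t})^{2}L^{2}\|\Tilde{w}_{k-1}-w_*\|^{2}$. To uniformize, I would use $(\eta_{k}^{t})^{2}\le\eta_{k}^{t}\eta_1$ and $1-\eta_1 L(2\alpha+1)>0$ to get $\eta_{k}^{t}(1-\eta_{k}^{t}L(2\alpha+1))\ge\eta_{k}^{t}(1-\eta_1 L(2\alpha+1))\ge\eta_0(1-\eta_1 L(2\alpha+1))$, so the contraction factor is at most $\rho:=1-2\eta_0\mu(1-\eta_1 L(2\alpha+1))\in[0,1)$, while $(\eta_{k}^{t})^{2}\le\eta_1^{2}$ bounds the additive term. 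Unrolling over $t=1,\dots,m$ with $w_0=\Tilde{w}_{k-1}$, $w_m=\Tilde{w}_k$, and summing the geometric series $\sum_{j\ge1}\rho^{j}<\rho/(1-\rho)<1/(1-\rho)=1/(2\eta_0\mu(1-\eta_1 L(2\alpha+1)))$ gives $\E[\|\Tilde{w}_k-w_*\|^{2}]\le\Tilde{\gamma}\,\|\Tilde{w}_{k-1}-w_*\|^{2}$ with $\Tilde{\gamma}$ as stated; chaining over $k$ finishes the proof.

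The main obstacle is the uniformization step: the map $\eta\mapsto\eta(1-\eta L(2\alpha+1))$ is a concave, non-monotone quadratic on $[\eta_0,\eta_1]$, so one cannot simply substitute an endpoint into the contraction coefficient. The clean bound relies on the factorization trick $(\eta_{k}^{t})^{2}\le\eta_{k}^{t}\eta_1$ combined with the sign condition $1-\eta_1 L(2\alpha+1)>0$, and one must retain $\rho\ge0$ so that $\rho^{m}$ and the partial geometric sums stay under control. Everything else is a routine re-run of the proof of Theorem~\ref{Th:svrg-2_opt1} with a variable step size.
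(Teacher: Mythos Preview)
Your proposal is correct and follows essentially the same route as the paper: bound $\eta_{k}^{t}\in[\eta_0,\eta_1]$ via strong convexity and $L$-smoothness of $F$, replay the one-step estimate \eqref{linear_x}/\eqref{eq:linear_w} with the variable step, and unroll the resulting recursion over the epoch as in Theorem~\ref{Th:svrg-2_opt1}. In fact you are more explicit than the paper on the delicate point: the paper simply asserts that ``in a way similar to~\eqref{linear_x}'' one obtains the inequality with $\eta_0$ and $\eta_1$ in the right places, whereas you spell out the factorization $(\eta_{k}^{t})^{2}\le\eta_{k}^{t}\eta_1$ together with $1-\eta_1 L(2\alpha+1)>0$ to handle the non-monotone quadratic $\eta\mapsto\eta(1-\eta L(2\alpha+1))$; this is exactly the step that justifies the paper's claimed bound.
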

\begin{proof}
Let $\eta^t_k$ be a BB-step size in SVRG-2BBS, that is
\begin{equation*}
    \eta^t_k = \vartheta_k\ \xi_t=\frac{\xi_t}{m_1}\frac{\|\Tilde{w}_{k-1} - \Tilde{w}_{k-2}\|^2}{((\Tilde{w}_{k-1} - \Tilde{w}_{k-2})^T (\Tilde{g}_{k-1} - \Tilde{g}_{k-2}))},
\end{equation*}
where $g_k = \nabla F(\Tilde{w}_k)$, and $\xi$ is small positive constant. Note that $\eta^t_k$ varies at each iteration.

We first obtain the lower and upper bounds of $\eta^t_k$ using the $L$-Lipschitz continuity of $\nabla F(w)$ and the strong convexity of $F(w)$.
\begin{equation*}
\mu \|\Tilde{w}_{k-1} - \Tilde{w}_{k-2}\|^2 \leq (g_{k-1} - g_{k-2})^T(\Tilde{w}_{k-1} - \Tilde{w}_{k-2}) \leq L \|\Tilde{w}_{k-1} - \Tilde{w}_{k-2}\|^2.
\end{equation*}
From the above inequalities, it is easy to see that $\eta^t_k$ has lower and upper bound as follows:
\begin{equation*}
    \frac{\xi_0}{m_1L} \leq \eta^t_k \leq \frac{\xi_1}{m_1 \mu}.
\end{equation*}
Let $\eta_0 = \frac{\xi_0}{m_1L}$ and $\eta_1 = \frac{\xi_1}{m_1\mu}$. In a way similar to show~\eqref{linear_x},  we can show that 
\begin{align*}
    \E[\|w_{t} -   w_*\|^2] & \leq \|w_{t-1} - w_*\|^2 - 2 \eta_0 (1 - \eta_1 L (2\alpha + 1))~[F(w_{t-1}) - F(w_*)] \nonumber\\
   & \qquad \qquad \qquad \qquad \qquad \qquad \qquad \qquad \qquad + 4 \alpha \eta_1^2 L ~[F(\Tilde{w}) - F(w_*)].
\end{align*}

In a way similar to the arguments in the proof of Theorem~\ref{Th:svrg-2_opt1}, summing the inequality over $t=0,\ldots, m-1$ and taking the expectation of the history, yield
\begin{align}
 \E[\|\Tilde{w}_{k} -   w_*\|^2] & \leq [1-2 \eta_0 \mu(1 - \eta_1 L (2\alpha + 1))]^m\|\Tilde{w}_{k-1} - w_*\|^2\nonumber\\
 &  \quad \qquad \qquad + 4 \alpha \eta_1^2 L^2 ~
 \sum_{j=1}^{m}[1-2 \eta_0 \mu(1 - \eta_1 L (2\alpha + 1))]^j
 \|\Tilde{w}_{k-1} - w_*\|^2 \label{eq:eta_1_0}\\
 & = \left[(1-2 \eta_0 \mu(1 - \eta_1 L (2\alpha + 1)))^m + \frac{2\alpha \eta_1^2 L^2}{\eta_0\mu(1 - \eta_1 L (2\alpha + 1))}\right] \|\Tilde{w}_{k-1} - w_*\|^2 \nonumber\\
 & = \Tilde{\gamma}  \|\Tilde{w}_{k-1} - w_*\|^2 \nonumber
\end{align}

Hence, we have the linear convergence, that is
\begin{equation*}
    \E[\|\Tilde{w}_k - w_*\|^2] <  \Tilde{\gamma}\  \|\Tilde{w}_{k-1} - w_*\|^2.
\end{equation*}



\end{proof}
\section{Numerical results}
 
In this section we present the numerical results for the proposed algorithms.
 
First we discuss the experiment setup for the numerical experiments. We performed all experiments on MATLAB R2018a on Intel(R) Xeon(R) CPU E7-8890 v4 @ 2.20GHz with 96 cores and we used SGDLibrary~\cite{kasai2017sgdlibrary} to perform the existing algorithms. We solve on standard learning problems, that is, the $\ell_2$-logistic regression:
\begin{equation}
    \min_w F(w) = \frac{1}{n} \sum_{i=1}^{n}\log\left[1+\exp(-b_ia_i^Tw) \right] + \frac{\lambda}{2} \|w\|^2,
\end{equation}
and the $\ell_2$-squared SVM:
 \begin{equation}
     \min_w F(w) = \frac{1}{2n} \sum_{i=1}^{n}\left( \left[1 - b_ia_i^T w\right]_+\right)^2 + \frac{\lambda}{2} \|w\|^2,
 \end{equation}
where $a_i \in \R^d$ is feature vector and $b_i \in \{\pm 1 \}$ is target label of the $i$-th sample, and $\lambda$ is a $\ell_2$ regularizer. We performed numerical experiments on benchmark datasets given in Table~\ref{tab:datasets}. The datasets are binary classification problems available on LIBSVM~\cite{LIBSVM_CC01a}. 
\begin{table}[!h]
    \small
    \centering
    \caption{Details of the datasets used in the experiments}
    \begin{tabular}{|l|r|r|r|}
        \hline
        \textbf{Dataset} & \textbf{Dimension} & \textbf{Training} & \textbf{Model}\\ 
        \hline
        \textit{ijcnn1}     & 22 & 49990 & SVM \\
        \textit{mnist-38}    & 784 & 11,982 & SVM\\ 
        \textit{adult}  & 123 & 32,561 & LR \\
        \textit{gisette}    & 5000 & 6000 & LR \\
        \textit{covtype}     & 54 & 464808 & LR \\
        \hline
    \end{tabular}
    \label{tab:datasets}
\end{table}\\
\subsection{Comparison of $\xi_t$ in SVRG-2BBS}

We first conduct the numerical experiments on the three different possibility of $\xi_t$ for the generalized BB step size in SVRG-2BBS. Recall that the step size $\eta^t_k$ in SVRG-2BBS is given as, 
\begin{equation}
    \eta^t_k = \frac{\xi_t}{m_1}\ \frac{\|\Tilde{w}_{k-1} - \Tilde{w}_{k-2}\|^2}{((\Tilde{w}_{k-1} - \Tilde{w}_{k-2})^T (\Tilde{g}_{k-1} - \Tilde{g}_{k-2}))},
\end{equation}
and $\xi_t$ is computed differently as given in Table~\ref{tab:sub_methods}, where $\xi_t$ is of the form of decay $\xi_t = \frac{c_1}{1+c_2T}$, $T$ is
current iterate $T = km + t, c_1$ is initial stepsize and $c_2= \eta_0 \lambda$ and in the case of fix $\xi_t$, we use grid search with $\xi_t =  c_1$. 
\begin{table}[!h]
    \small
    \centering
    \caption{generalized variants of SVRG-2BBS} 
    \begin{tabular}{|l|r|r|r|}
        \hline
        \textbf{Methods} & \textbf{$ m_1 $} & \textbf{$ \xi_t $ type} \\ 
        \hline
        \textbf{\quad M1}      & $2n$ & fix $\xi_1$ \\
        \textbf{\quad M2}      & $n$ & decay $\xi_t$ \\
        \textbf{\quad M3}      & $1$ & decay $\xi_t$ \\
        \hline
    \end{tabular}
    \label{tab:sub_methods}
\end{table}
Figure~\ref{fig:M1to4} shows the comparison of M1, M2, and M3 on the \textit{adult, covtype,} and \textit{gisette} datasets for the \textit{$\ell_2$-regularized logistic regression} with $\lambda = 10^{-3}$. We tuned step size for all methods via grid
 search $c_1 \in \{10^1,10^0,10^{-1},10^{-2},10^{-3},10^{-4},10^{-5}\}$ and we set $m = 2n$ (where $n$ is sample size) as in SVRG~\cite{SVRG_NIPS2013}. From Figure~\ref{fig:M1to4} we see that M1 and and M3 performs better among all three methods. Therefore, in the further experiments we use M1 and M3 for comparison with SVRG-2BB and the other existing methods.

\begin{figure}[!h]
    \centering
    \subfigure{\includegraphics[width = 3cm, height = 0.5cm]{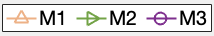}}\\
    \subfigure[\textit{adult}]{\includegraphics[width = 4.7cm, height = 3.4cm]{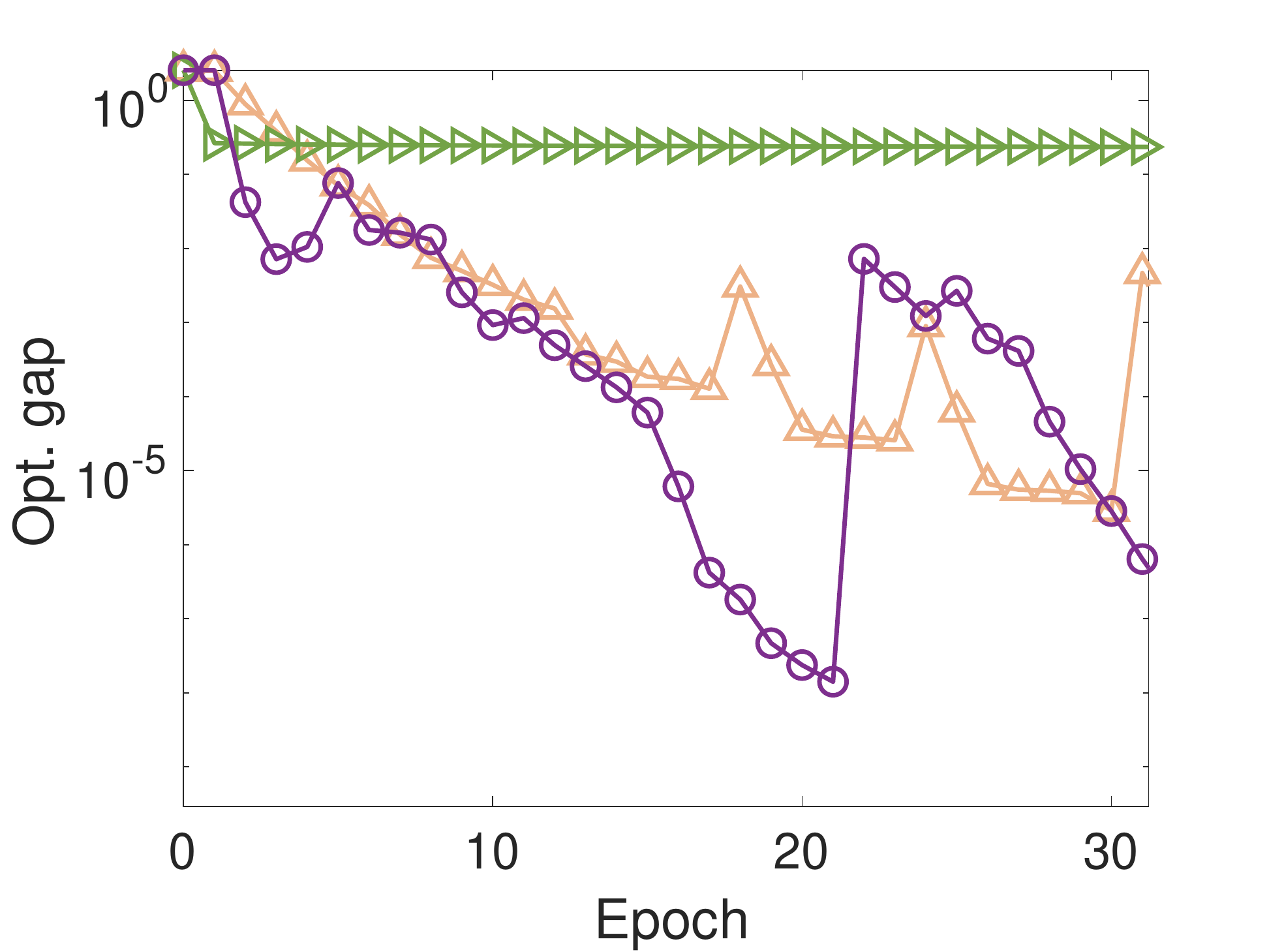}}
    \subfigure[\textit{covtype}]{\includegraphics[width = 4.7cm, height = 3.4cm]{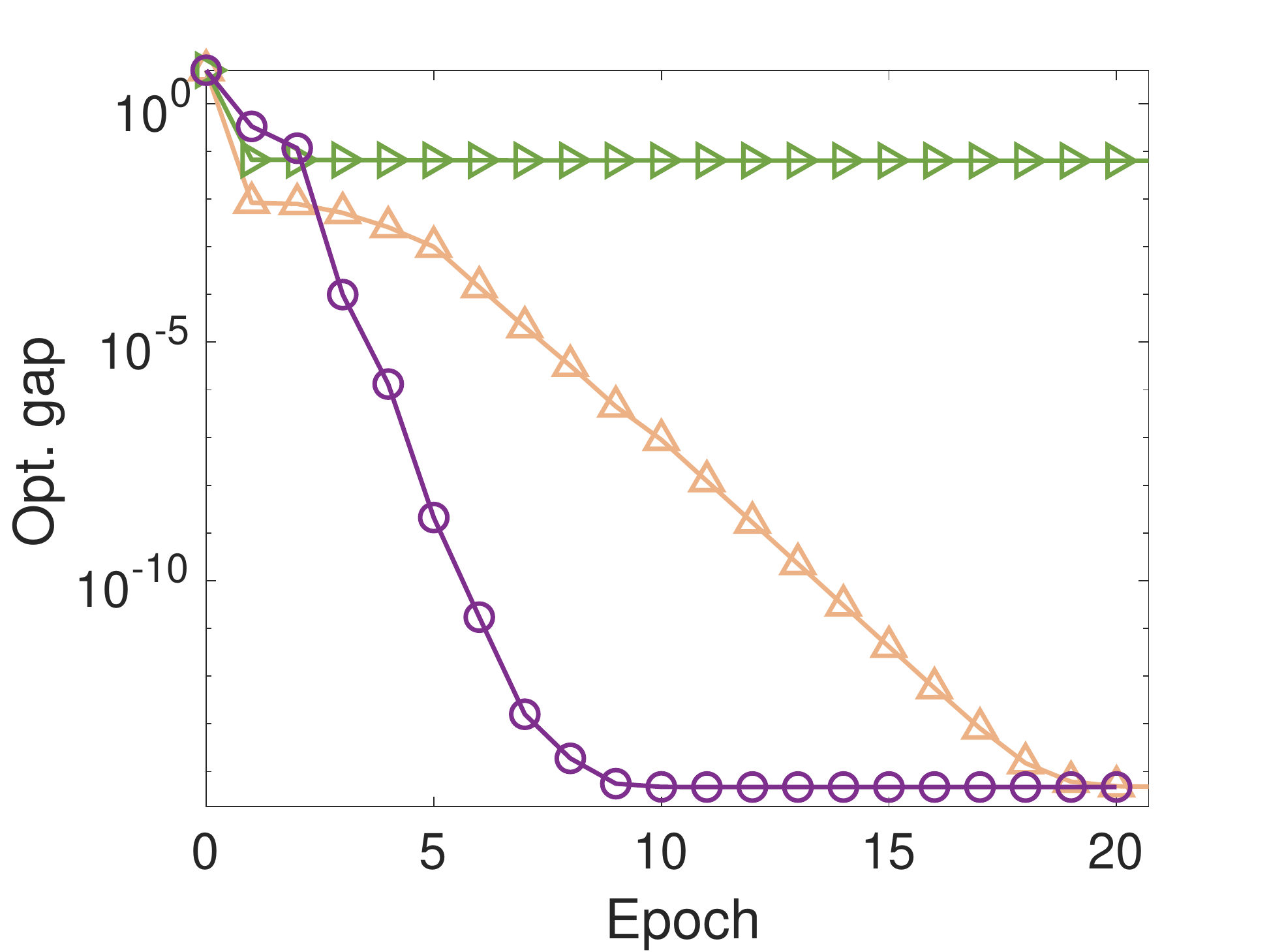}}
    \subfigure[\textit{gisette}]{\includegraphics[width = 4.7cm, height = 3.4cm]{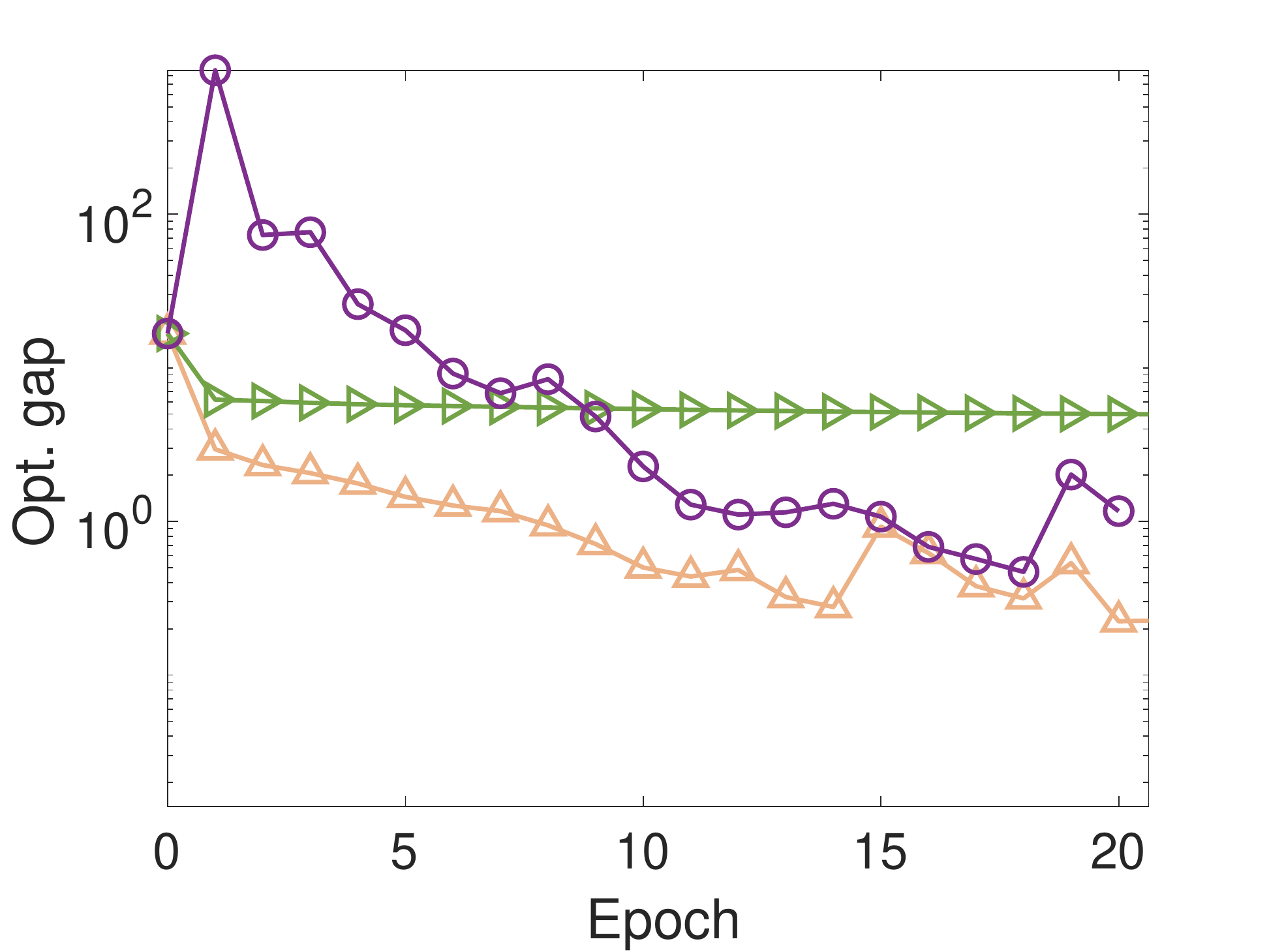}}
    \caption{comparison of M1, M2, and M3 with $\lambda = 10^{-3}$ on \textit{adult, covtype,} and \textit{gisette}.}
    \label{fig:M1to4}        
\end{figure}
\subsection{Comparison of SVRG, SVRG-BB, SVRG-2D, and the proposed methods}

Next, we compare SVRG, SVRG-BB, SVRG-2D with the proposed methods SVRG-2BB and SVRG-2BBS (M1 and M3).

We explain the experimental setup. We tuned the stepsize $\eta$ for all methods via grid
search $\eta \in \{10^0,10^{-1},10^{-2},10^{-3},10^{-4},10^{-5}\}$ for SVRG, SVRG-2D, SVRG-2BB, SVRG-BB, and $\xi_T \in \{10^0,10^{-1},10^{-2},10^{-3},10^{-4},10^{-5}\}$ for SVRG-2BBS (M1 and M3). It is essential to note that SVRG-BB highly depends on the initial step size $\eta_0$ as it can be seen in the numerical results of ~\cite{SVRGBB_NIPS2016}. This implies, that even though we are using adaptive step size, we still need to hunt for the best initial (parameter) step size. Therefore, we are not benefiting from the SVRG-BB in terms of reducing the hunt of the number of initial parameters which does not outperform the SVRG-2BB, SVRG, and SVRG-2D. 

For all methods, we set $m = 2n$. We present the optimiality gap with epoch and CPU time. Also, we present the variance information with epoch for each datasets. The variance is calculated as $\| v_k - \nabla f(w_k)\|^2$, where $v_k$ is search direction at $t=0$ of the $k$-th epoch.

We present numerical results in Figures~\ref{fig:covtype}-\ref{fig:ijcnn1}. Each column of the figure represents the results for each dataset. In the figures of the first row of each column, the $x$-axis denotes epochs and the $y$-axis denotes the optimality gap $F(\Tilde{w}_k) - F(w_*)$, where $w_*$ is obtained by limited memory BFGS method. In the figures of the second rows of each column, the $x$-axis denotes CPU time and the $y$-axis denotes the optimality gap. In the figures of the third rows of each column, the $x$-axis denotes epochs and the $y$-axis denotes the variance $\| v_k - \nabla f(w_k)\|^2$.
\begin{figure}[!h]
    \centering
    {\includegraphics[scale=0.35]{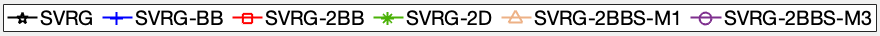}} \\
    {\includegraphics[width = 4.3cm, height = 3.3cm]{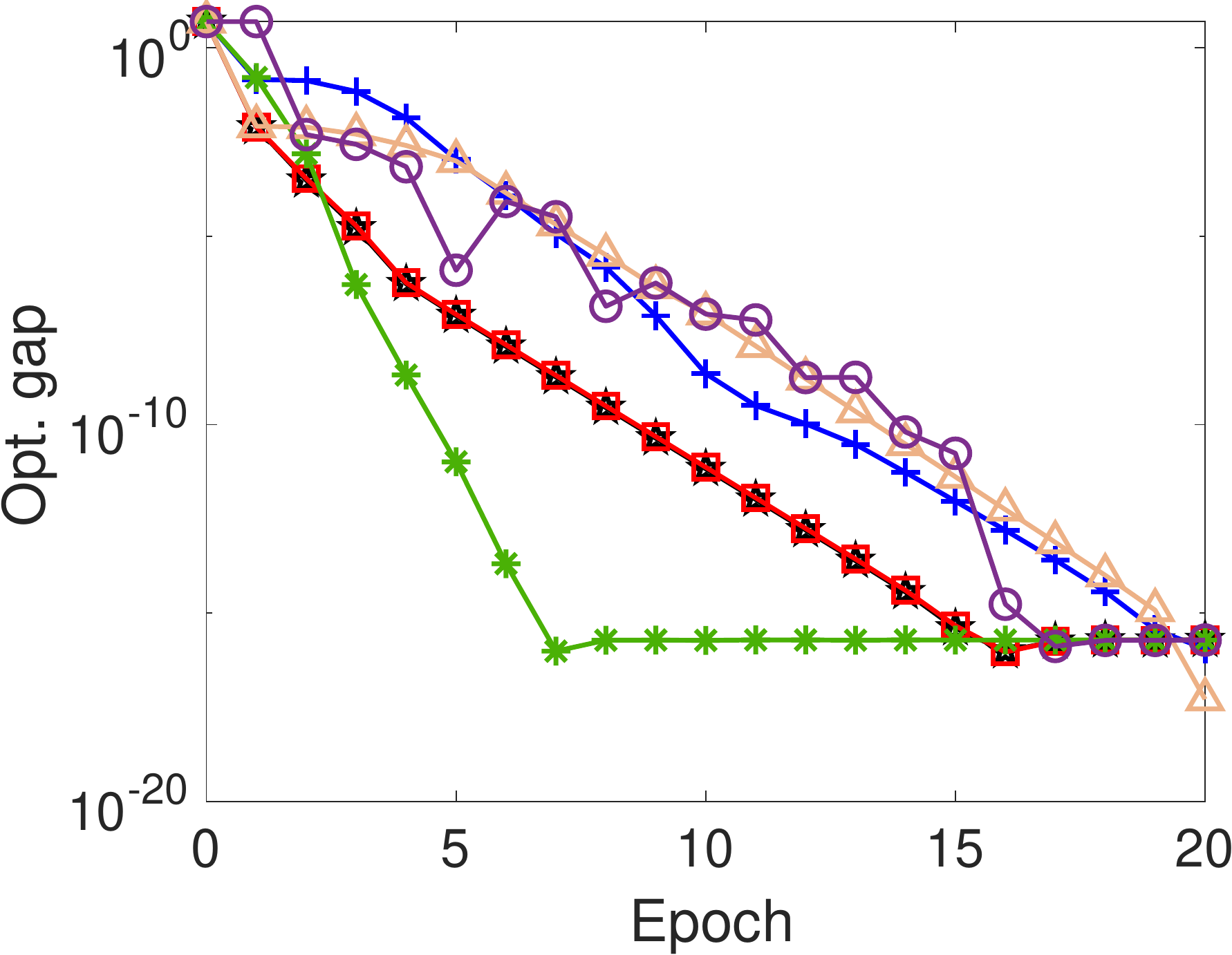}}
    {\includegraphics[width = 4.3cm, height = 3.3cm]{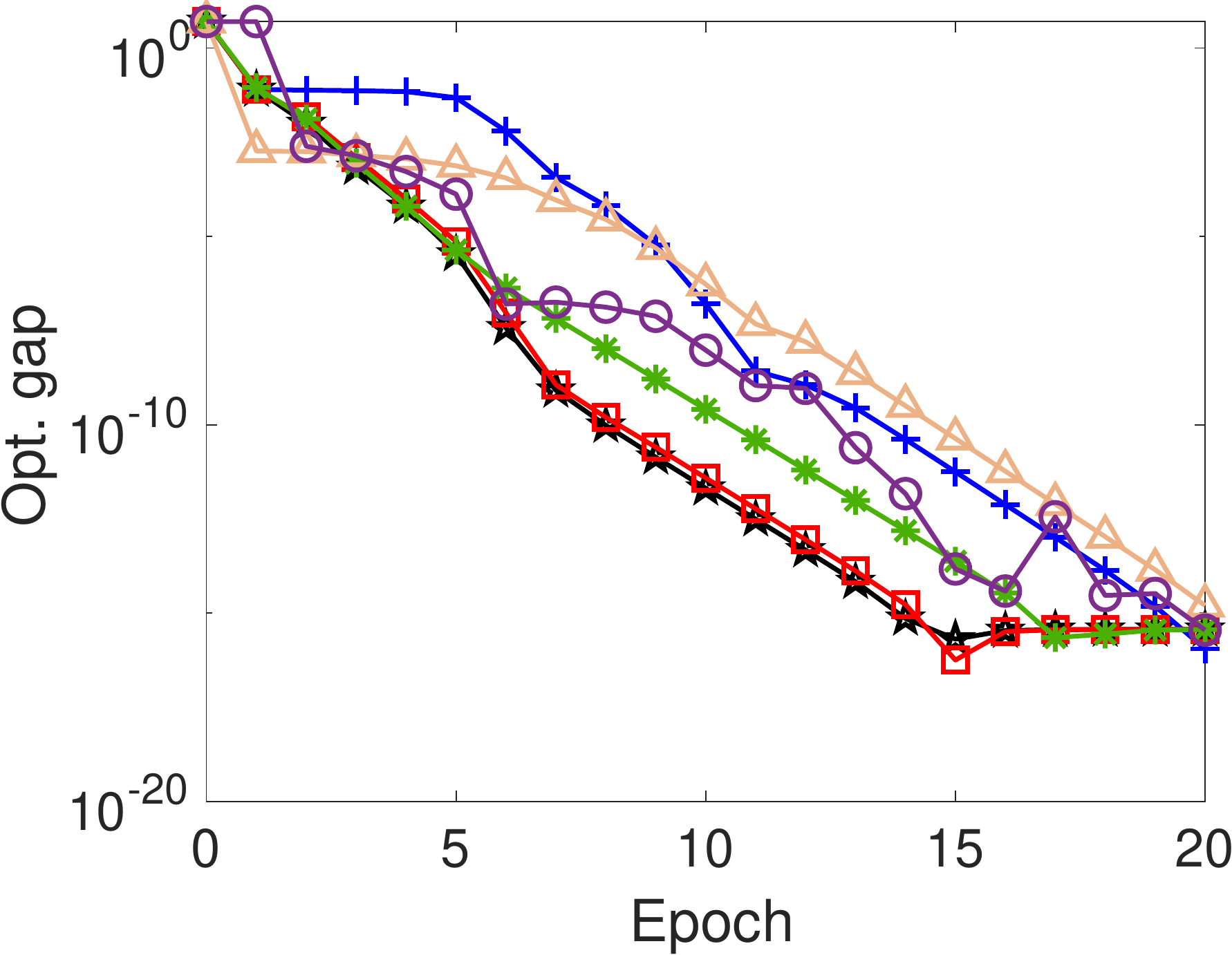}}
    {\includegraphics[width = 4.3cm, height = 3.3cm]{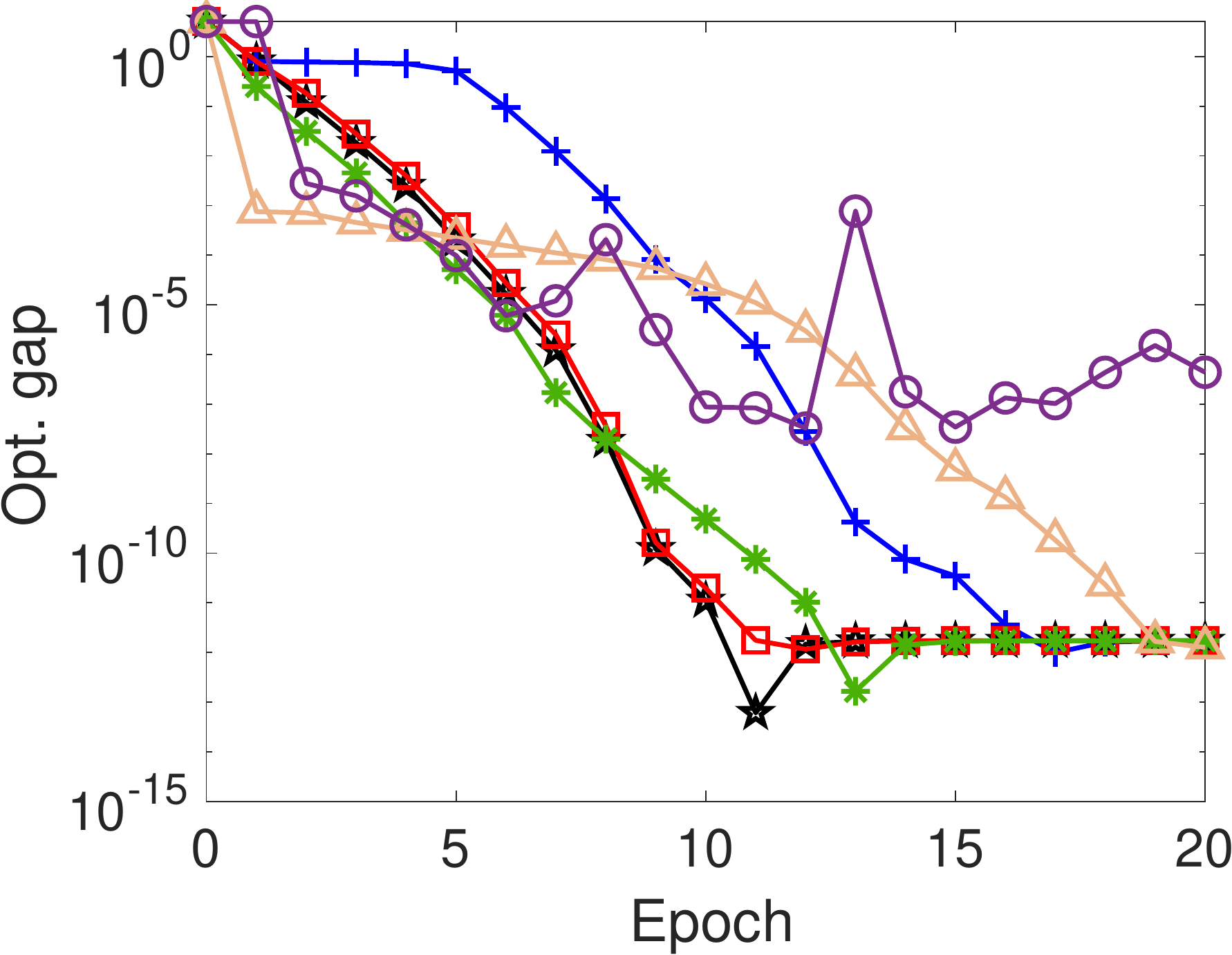}}\\
    {\includegraphics[width = 4.3cm, height = 3.3cm]{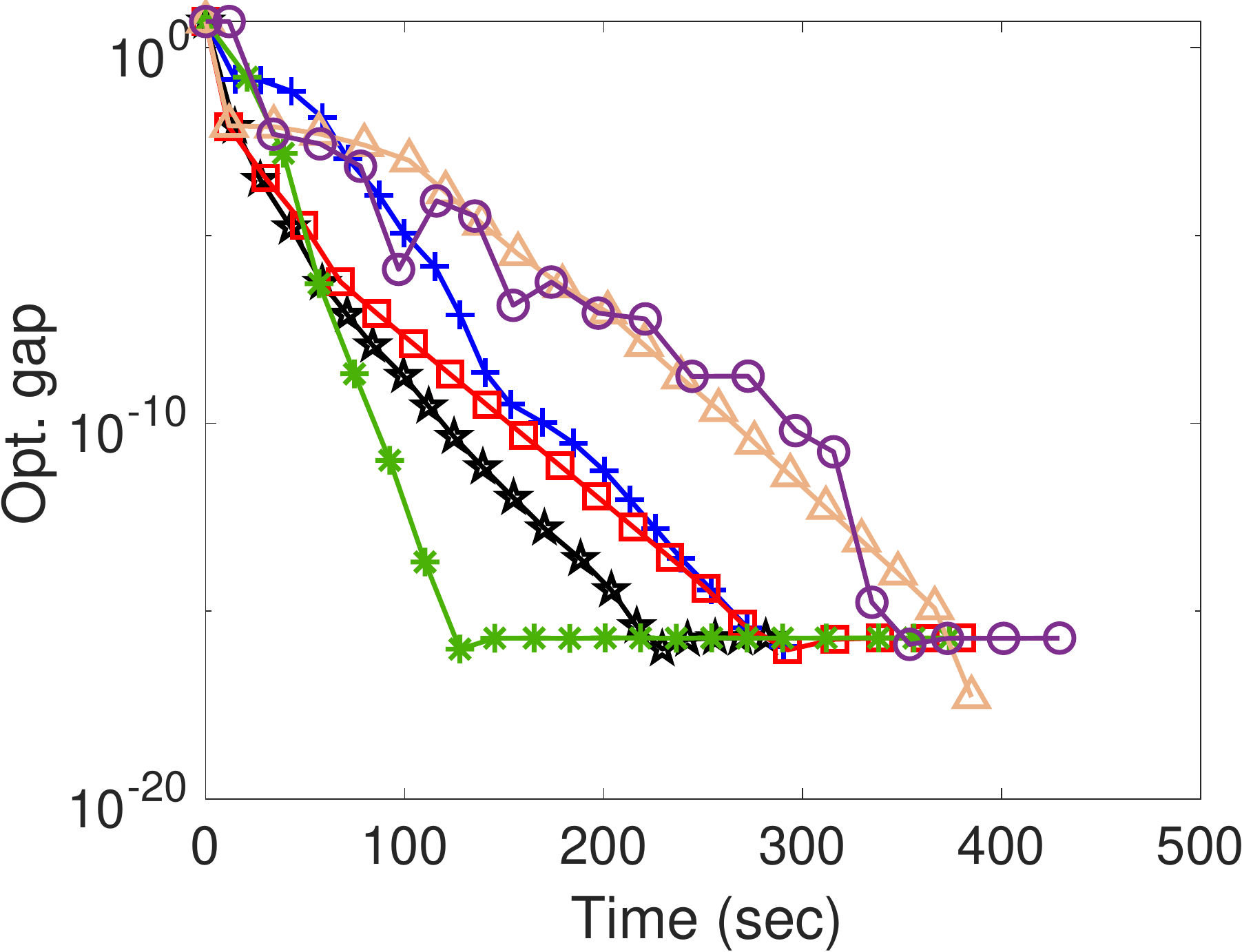}}
    {\includegraphics[width = 4.3cm, height = 3.3cm]{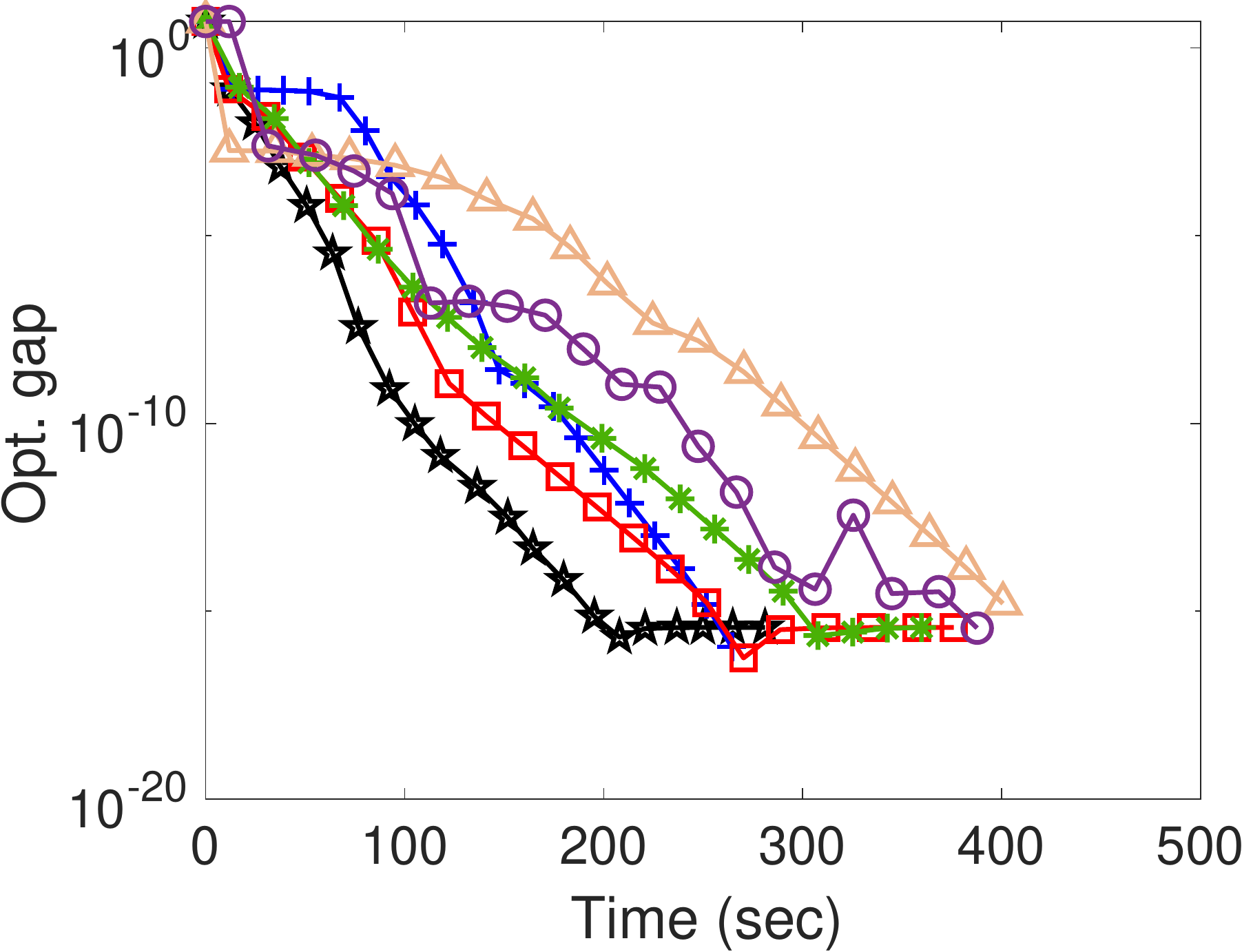}}
    {\includegraphics[width = 4.3cm, height = 3.3cm]{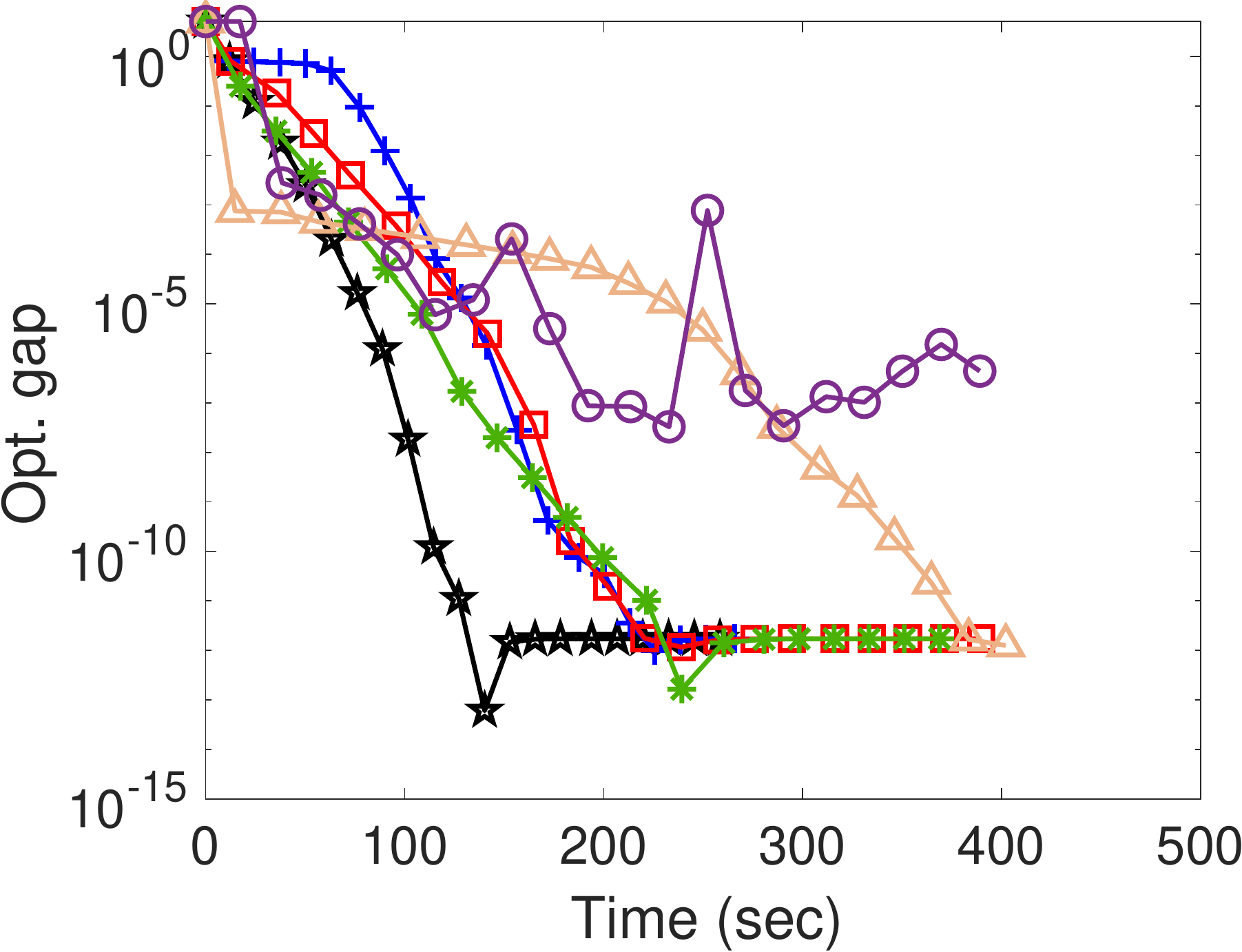}}\\
    \subfigure[\textit{covtype} ($\lambda=10^{-3}$)]{\includegraphics[width = 4.3cm, height = 3.3cm]{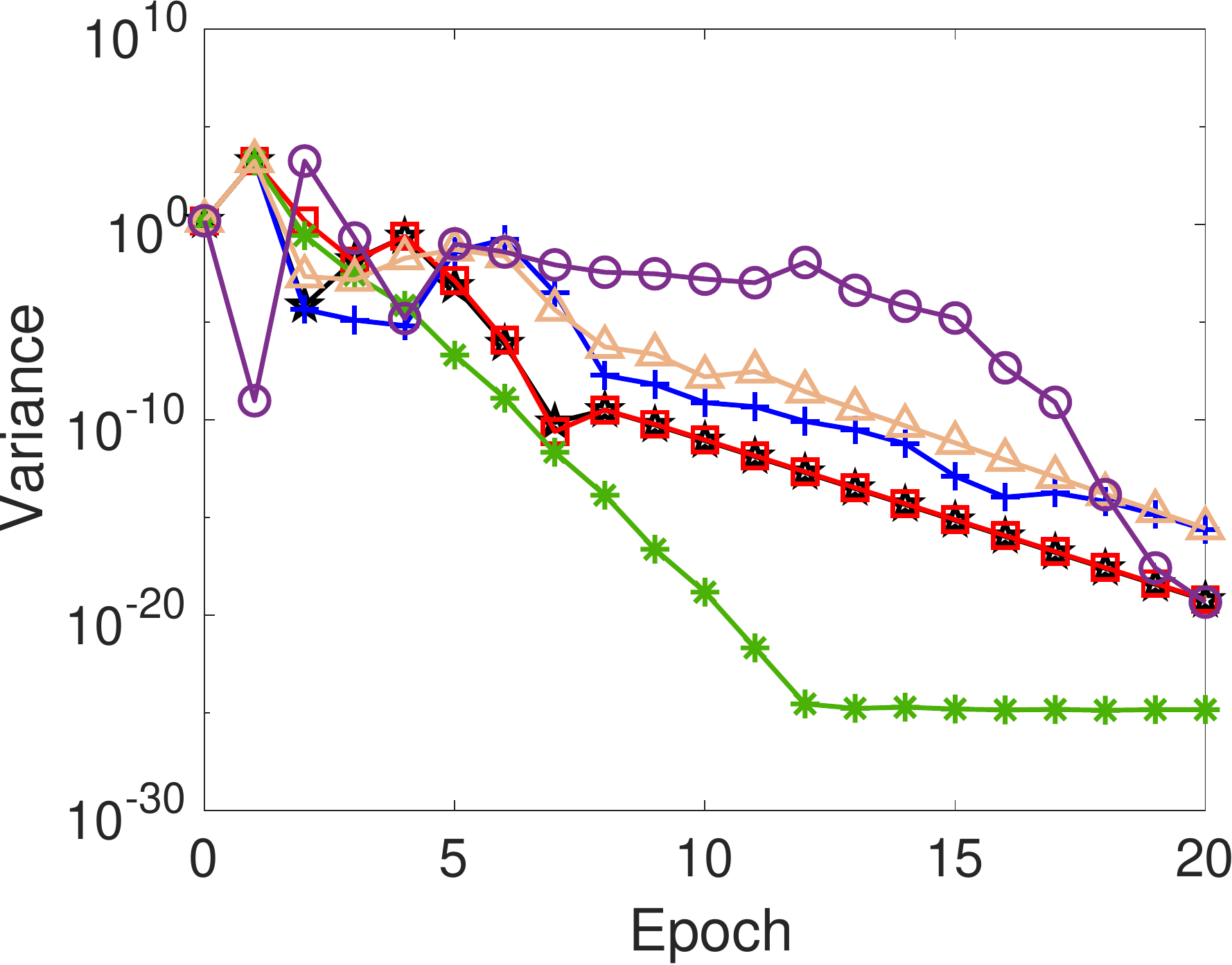}}
    \subfigure[\textit{covtype} ($\lambda=10^{-4}$)]{\includegraphics[width = 4.3cm, height = 3.3cm]{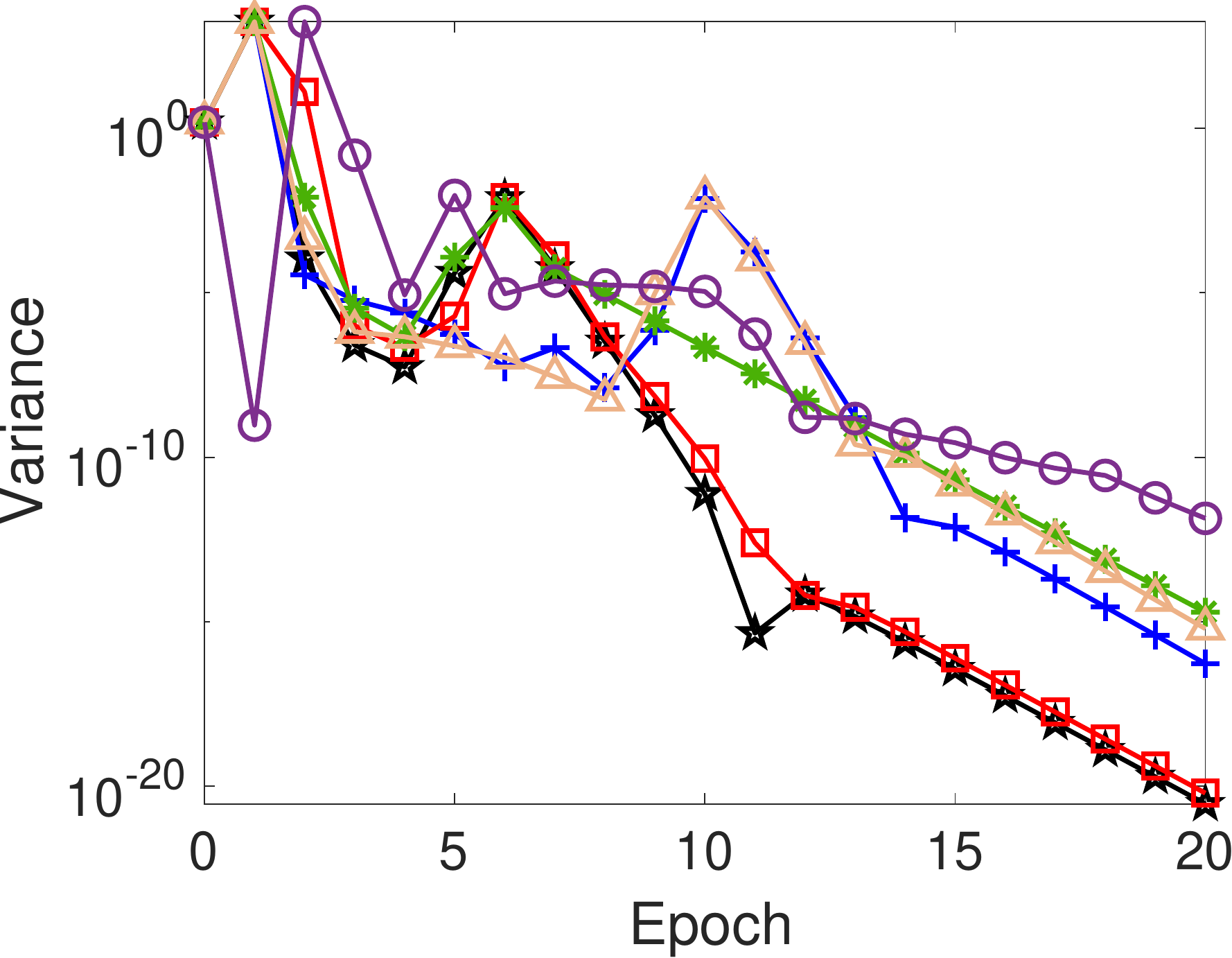}}
    \subfigure[\textit{covtype} ($\lambda=10^{-5}$)]{\includegraphics[width = 4.3cm, height = 3.3cm]{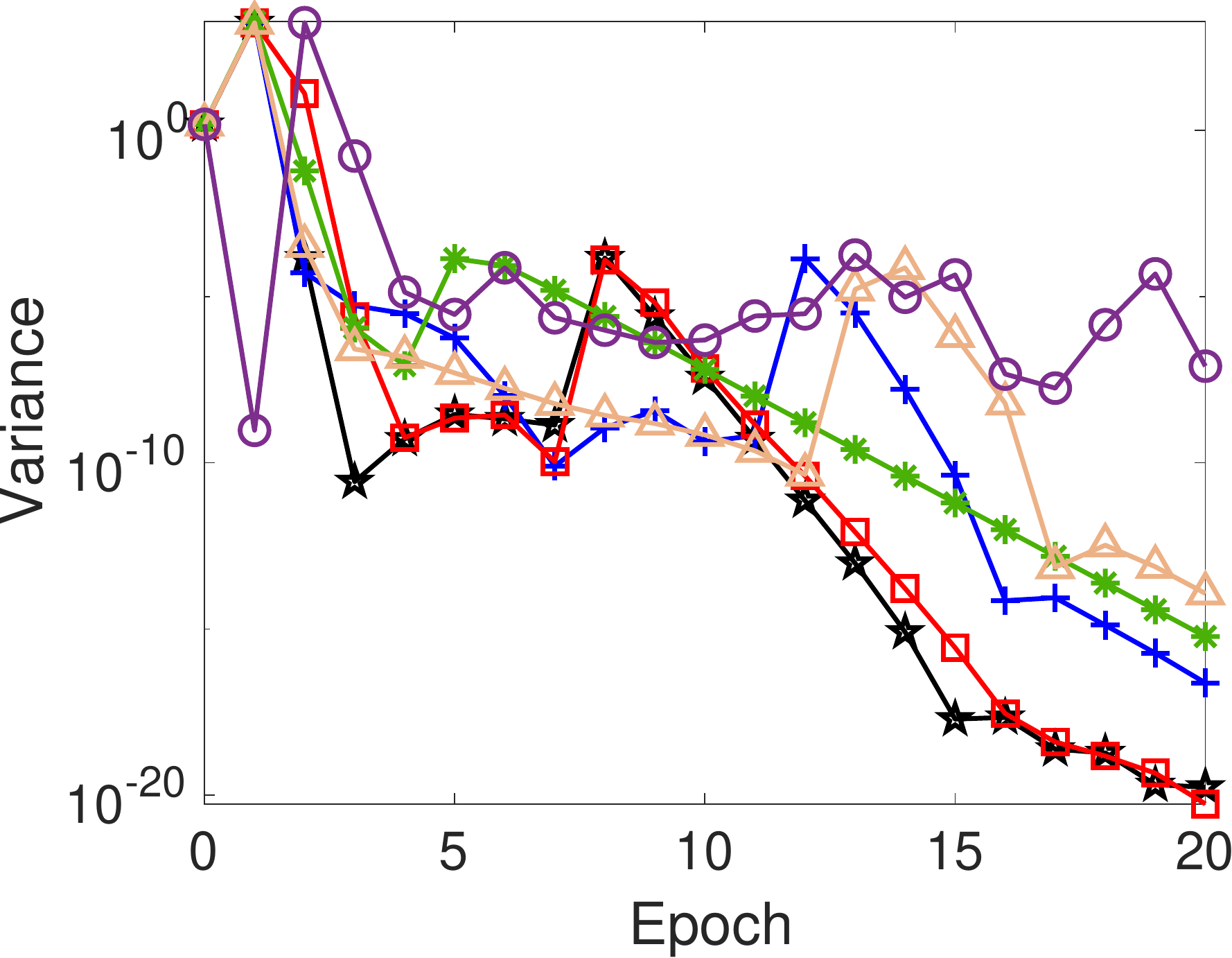}}
    \caption{Comparison on \textit{covtype} dataset}
    \label{fig:covtype}
\end{figure}

We conduct numerical experiments on \textit{covtype, gisette} and \textit{adult} for the \textit{$\ell_2$-regularized logistic regression} with $\lambda = \{10^{-3}, 10^{-4}, 10^{-5}\}$ in Figure~\ref{fig:covtype}, Figure~\ref{fig:gisette} and Figure~\ref{fig:adult}, respectively. Figure~\ref{fig:mnist} shows the numerical experiments on \textit{mnist38} for the \textit{$\ell_2$-regularized squared SVM} with $\lambda = \{10^{-3}, 10^{-4}, 10^{-5}\}$, where we used binary class using those samples whose target class were 3 and 8. Figure~\ref{fig:ijcnn1} shows the numerical experiments on \textit{ijcnn1} for the \textit{$\ell_2$-regularized squared SVM} with $\lambda = \{10^{-3}, 10^{-4}\}$.

The figures for the \textit{$\ell_2$-regularized logistic regression} shows that SVRG with the second order information outperforms the pure SVRG. SVRG-2BB outperforms all methods when $d$ is large (\textit{i.e.,} where ratio of $n/d$ is close to 1) and $\lambda$ is small. SVRG-2D performs better only on small $\lambda = 10^{-3}$ for \textit{covtype} dataset where $n$ is large and $d$ is very small. However, on the remaining datasets, SVRG-2D performs similar to SVRG. As the variance of SVRG-BB and SVRG-2BBS is fluctuating, hence the BB-stepsize can be quite unstable.

The fugues for the \textit{$\ell_2$-regularized squares SVM} show that SVRG unables to outperform other methods. Figure~\ref{fig:mnist} shows SVRG-2D performs well in the initial epochs, however SVRG-2BB, SVRG-2BBS (M1 and M3) and SVRG-BB performs similar on the later epochs. Figure~\ref{fig:ijcnn1} shows that SVRG-2BB, SVRG-2BBS (M1 and M3), and SVRG-BB outperform SVRG and SVRG-2D for $\lambda = 10^{-3}.$
\begin{figure}[!h]
    \centering
    {\includegraphics[scale=0.35]{Legend_Main_aug22.png}} \\
    {\includegraphics[width = 4.3cm, height = 3.3cm]{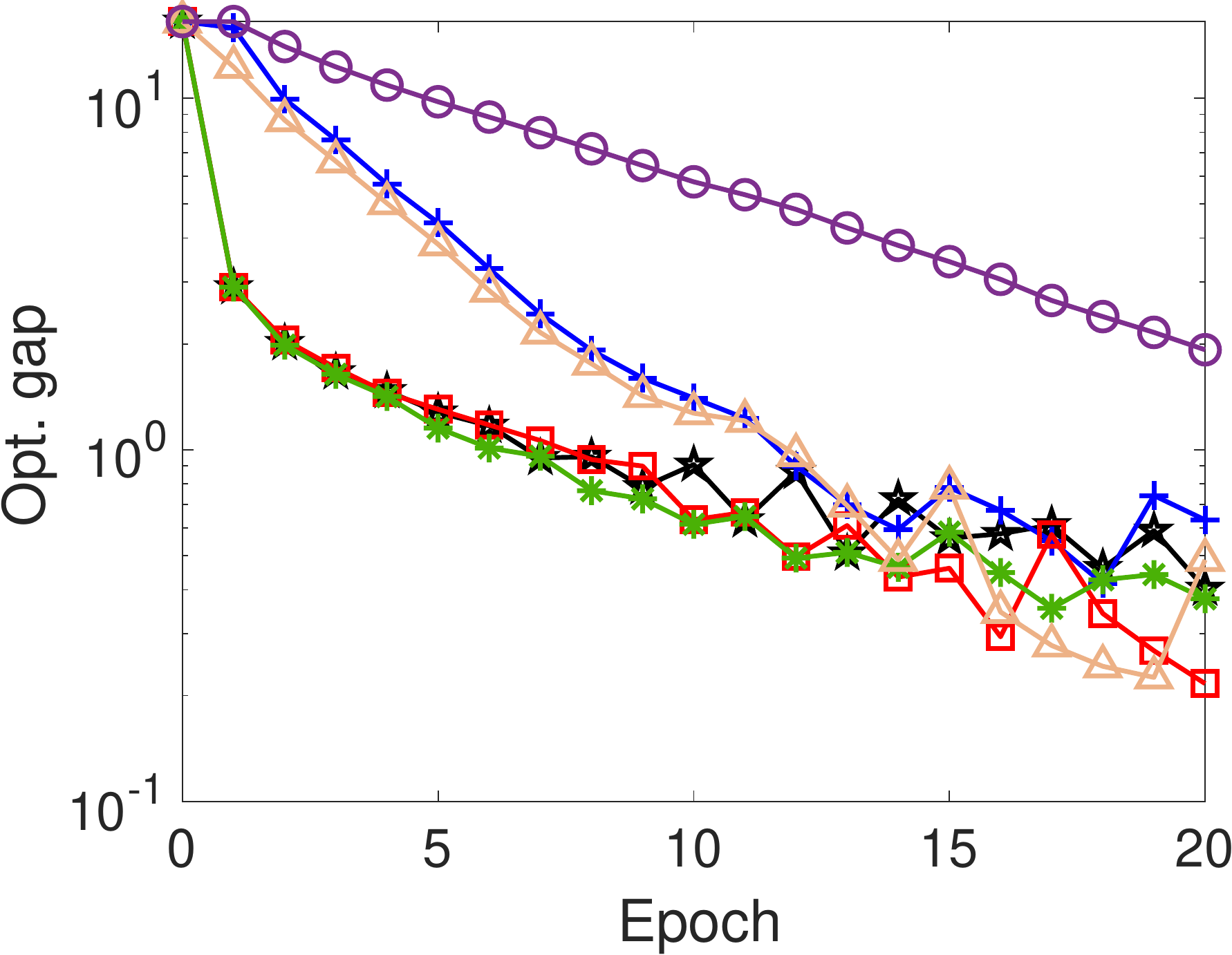}}
    {\includegraphics[width = 4.3cm, height = 3.3cm]{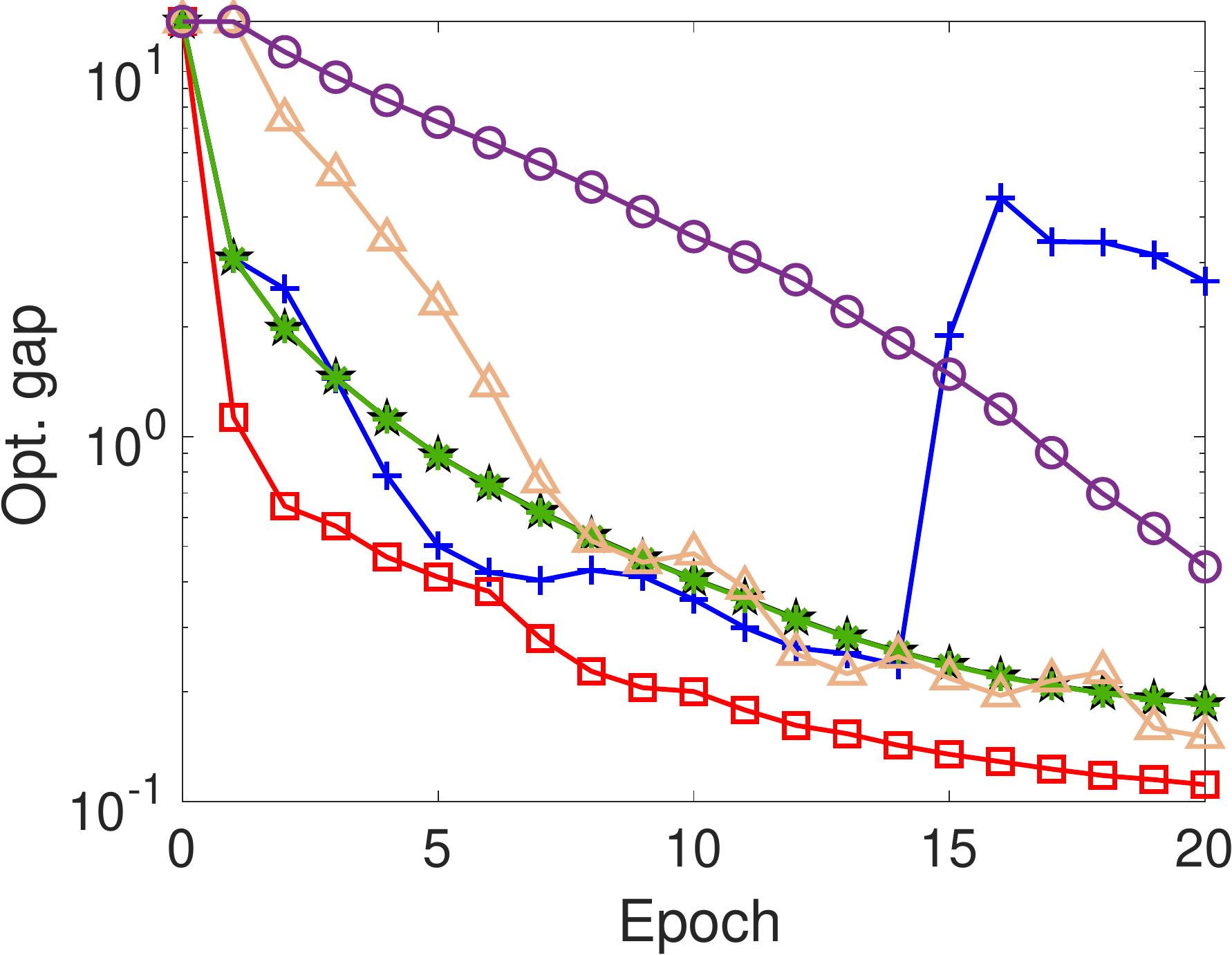}}
    {\includegraphics[width = 4.3cm, height = 3.3cm]{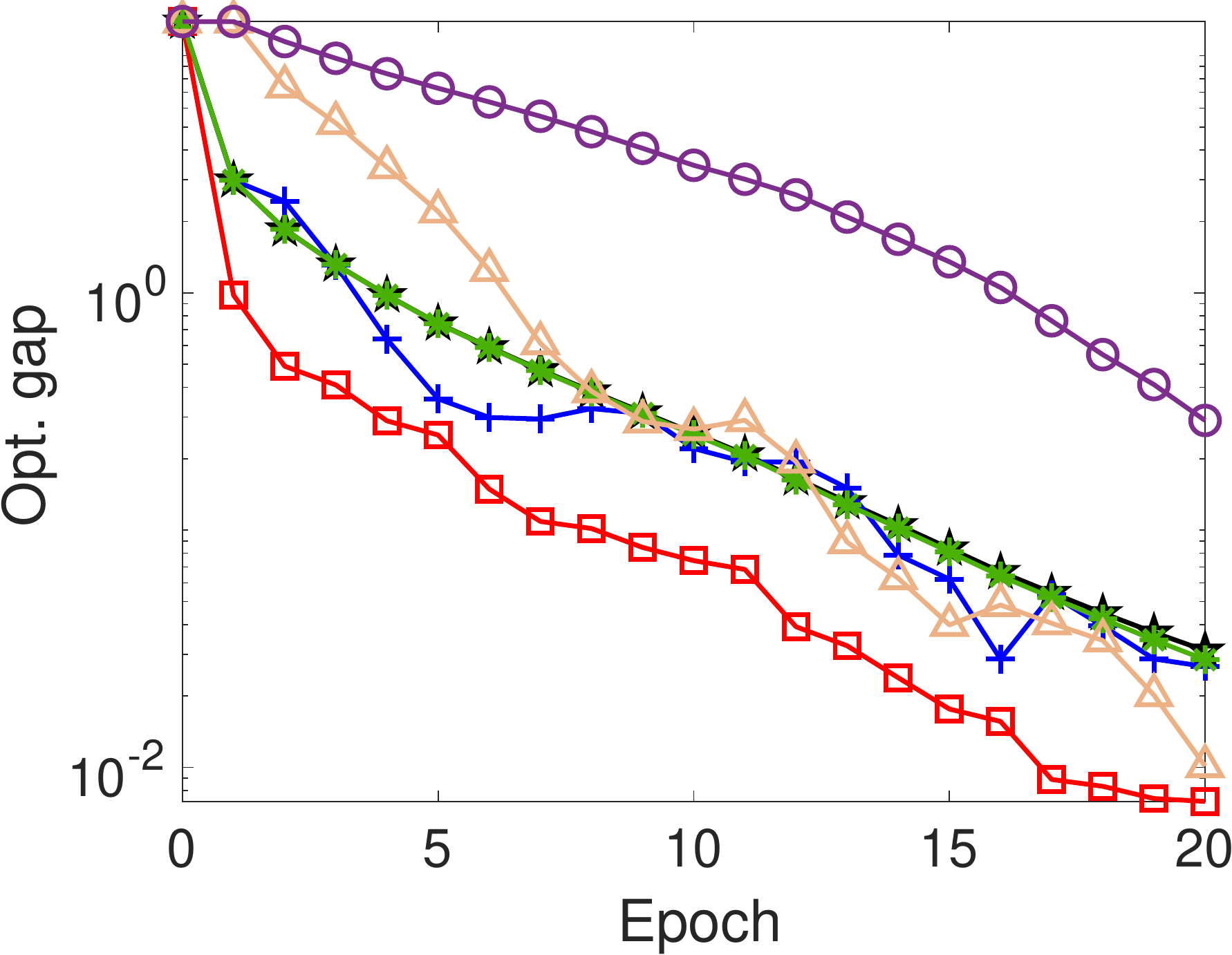}}\\
    {\includegraphics[width = 4.3cm, height = 3.3cm]{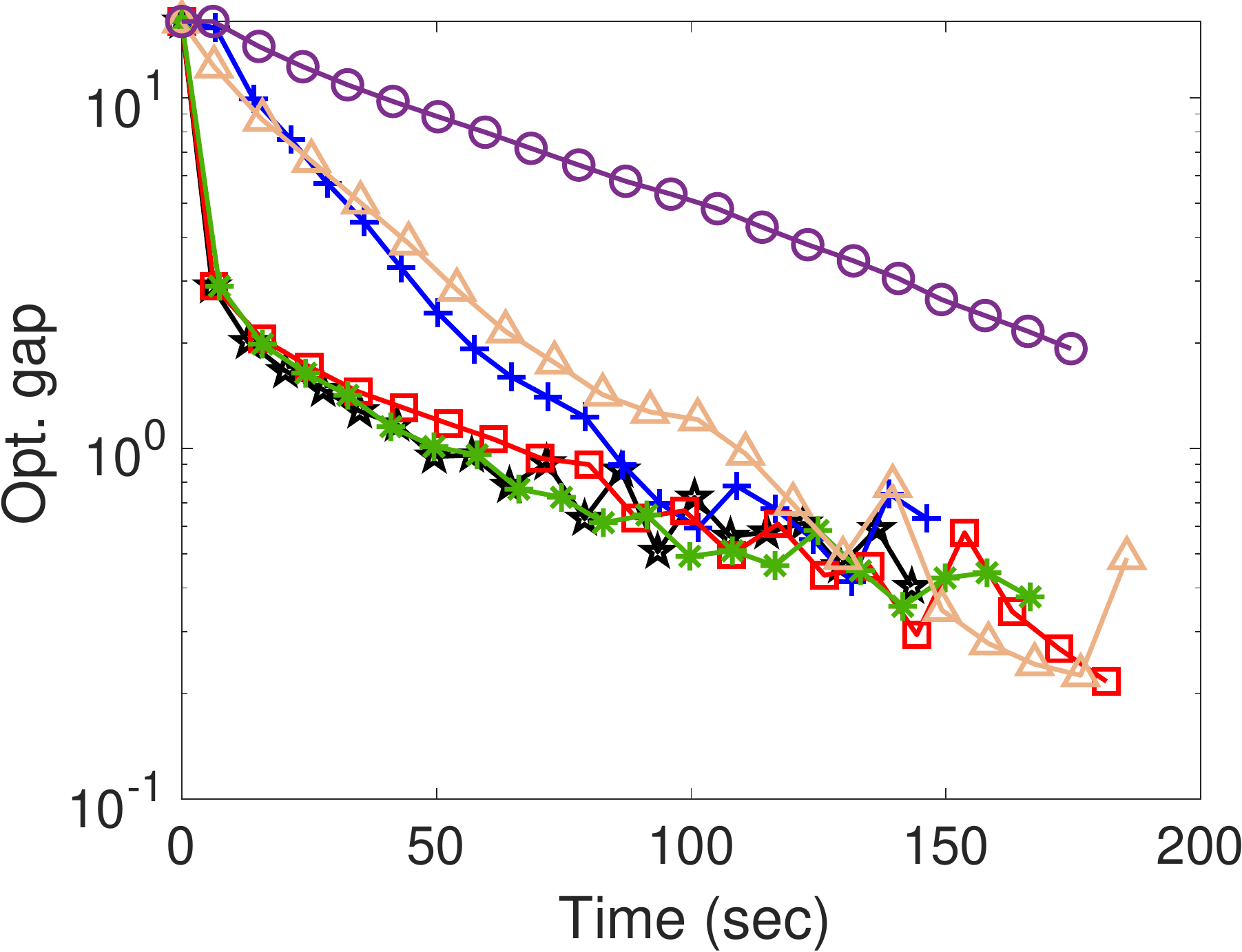}}
    {\includegraphics[width = 4.3cm, height = 3.3cm]{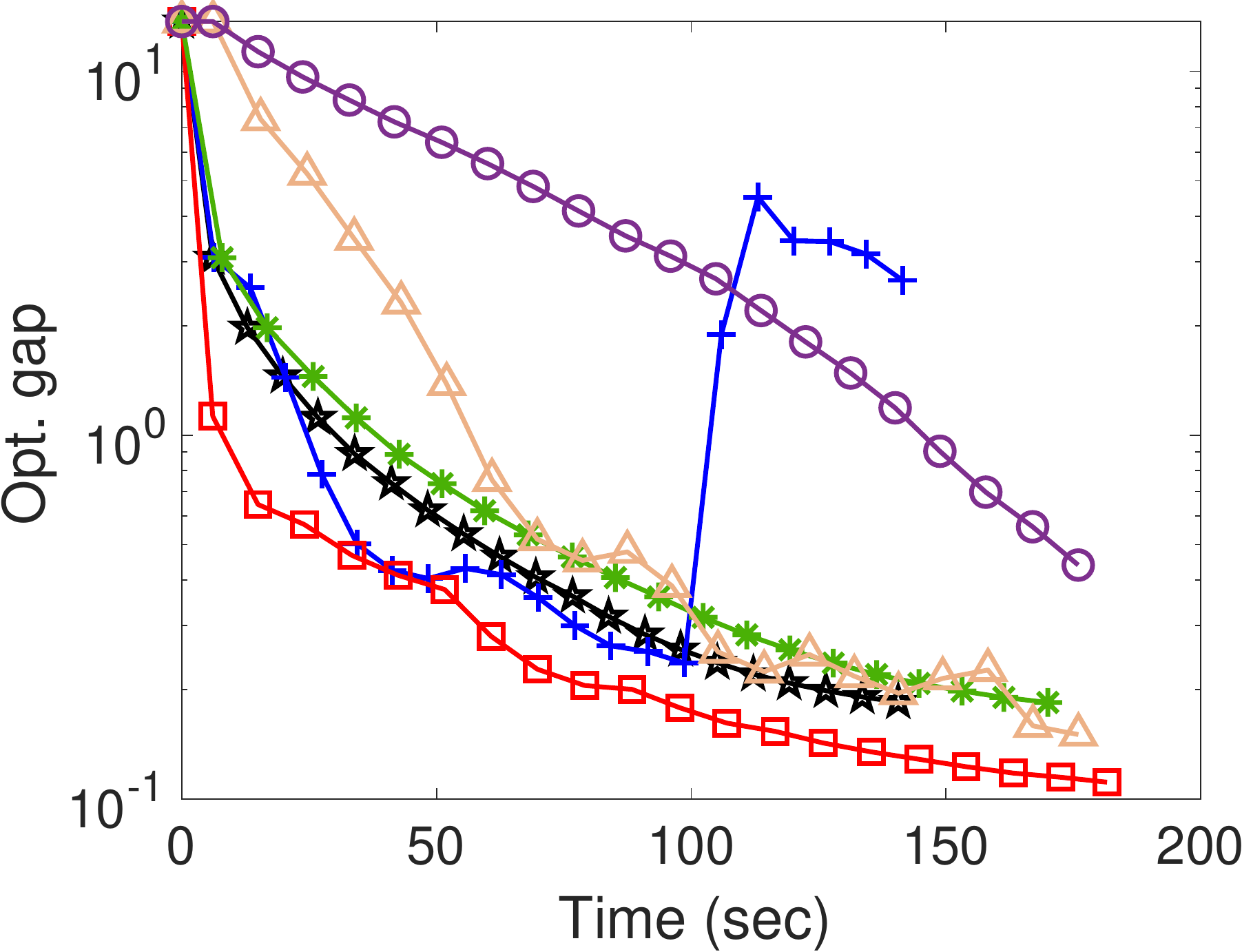}}
    {\includegraphics[width = 4.3cm, height = 3.3cm]{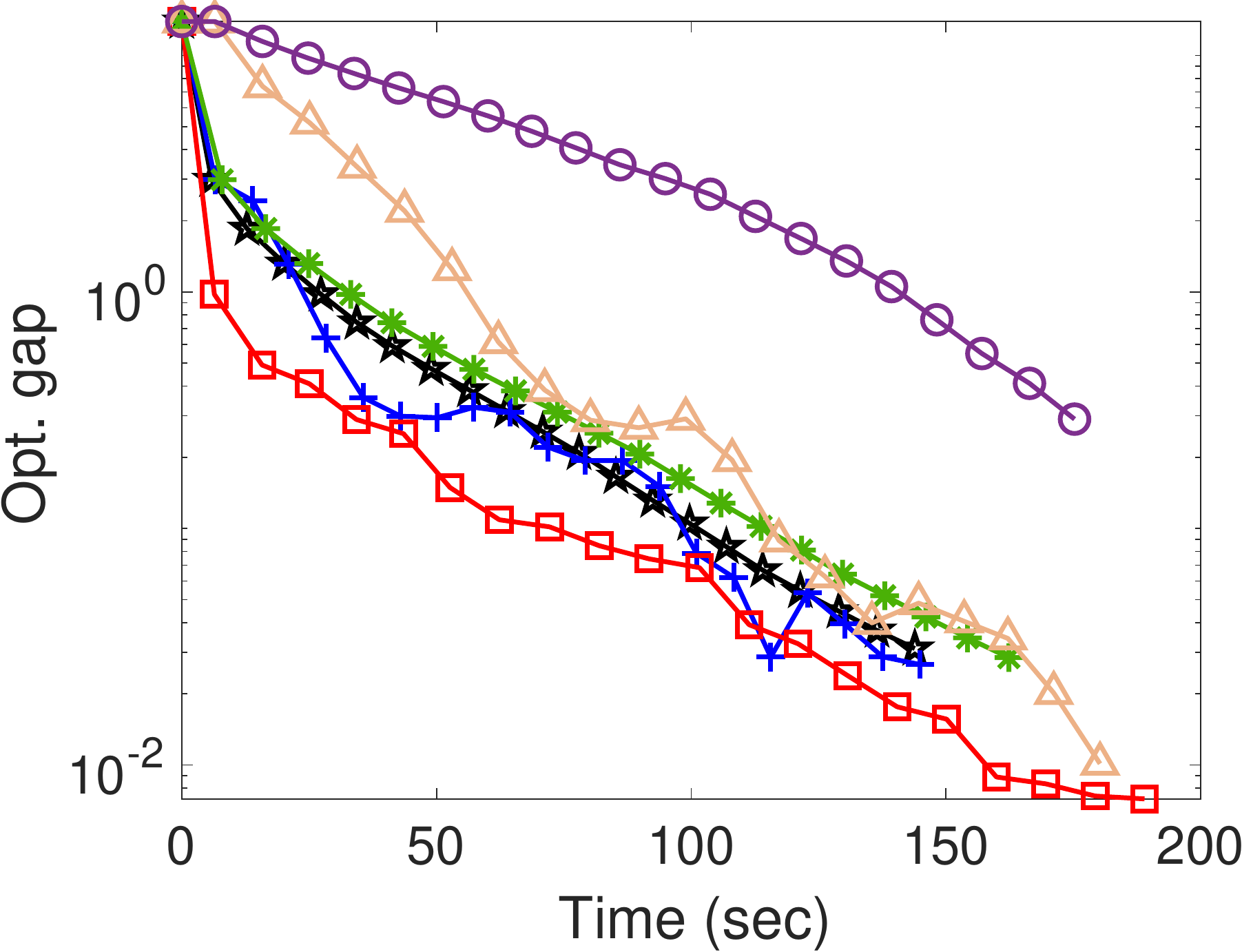}}\\
    \subfigure[\textit{gisette} ($\lambda=10^{-3}$)]{\includegraphics[width = 4.3cm, height = 3.3cm]{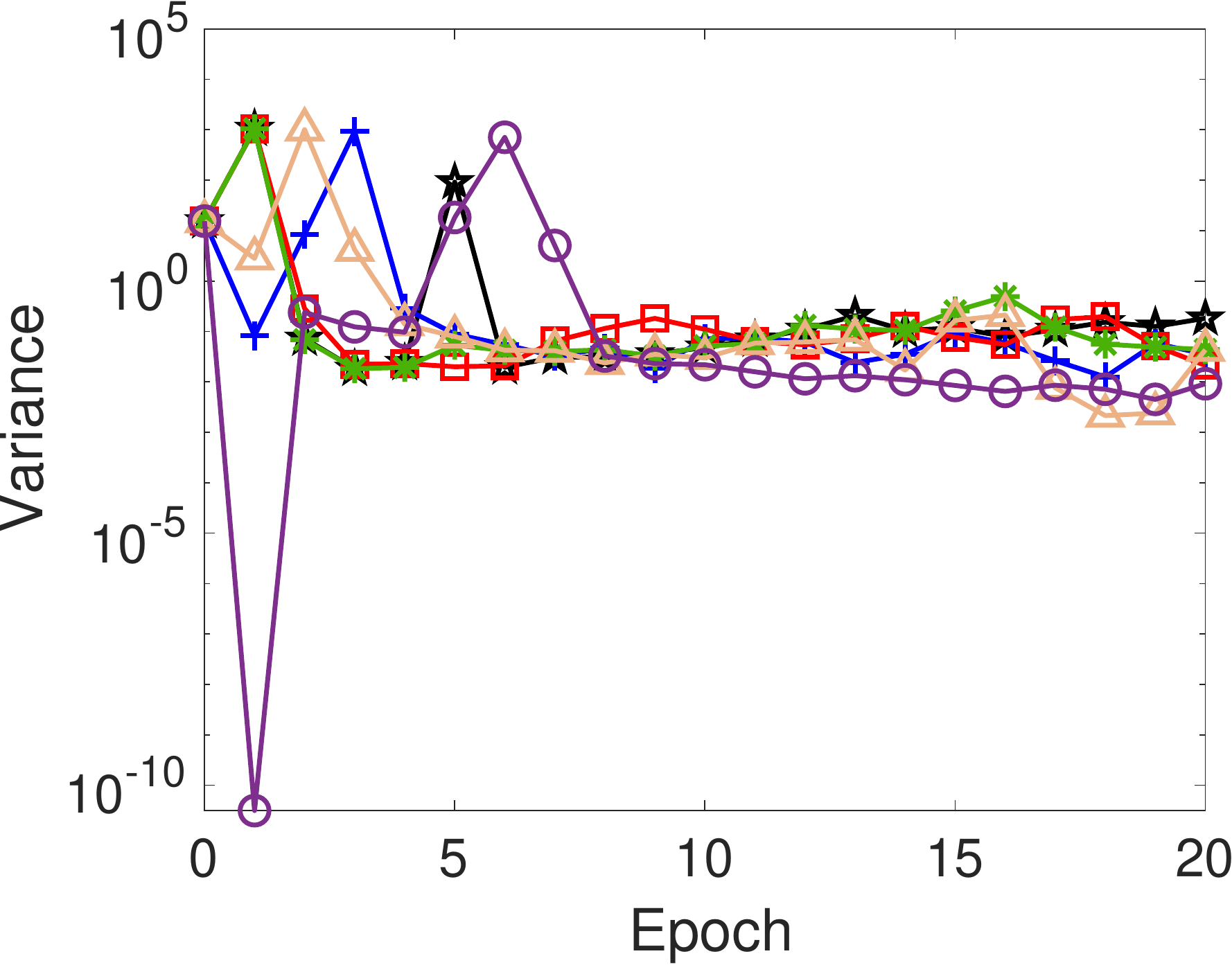}}
    \subfigure[\textit{gisette}($\lambda=10^{-4}$)]{\includegraphics[width = 4.3cm, height = 3.3cm]{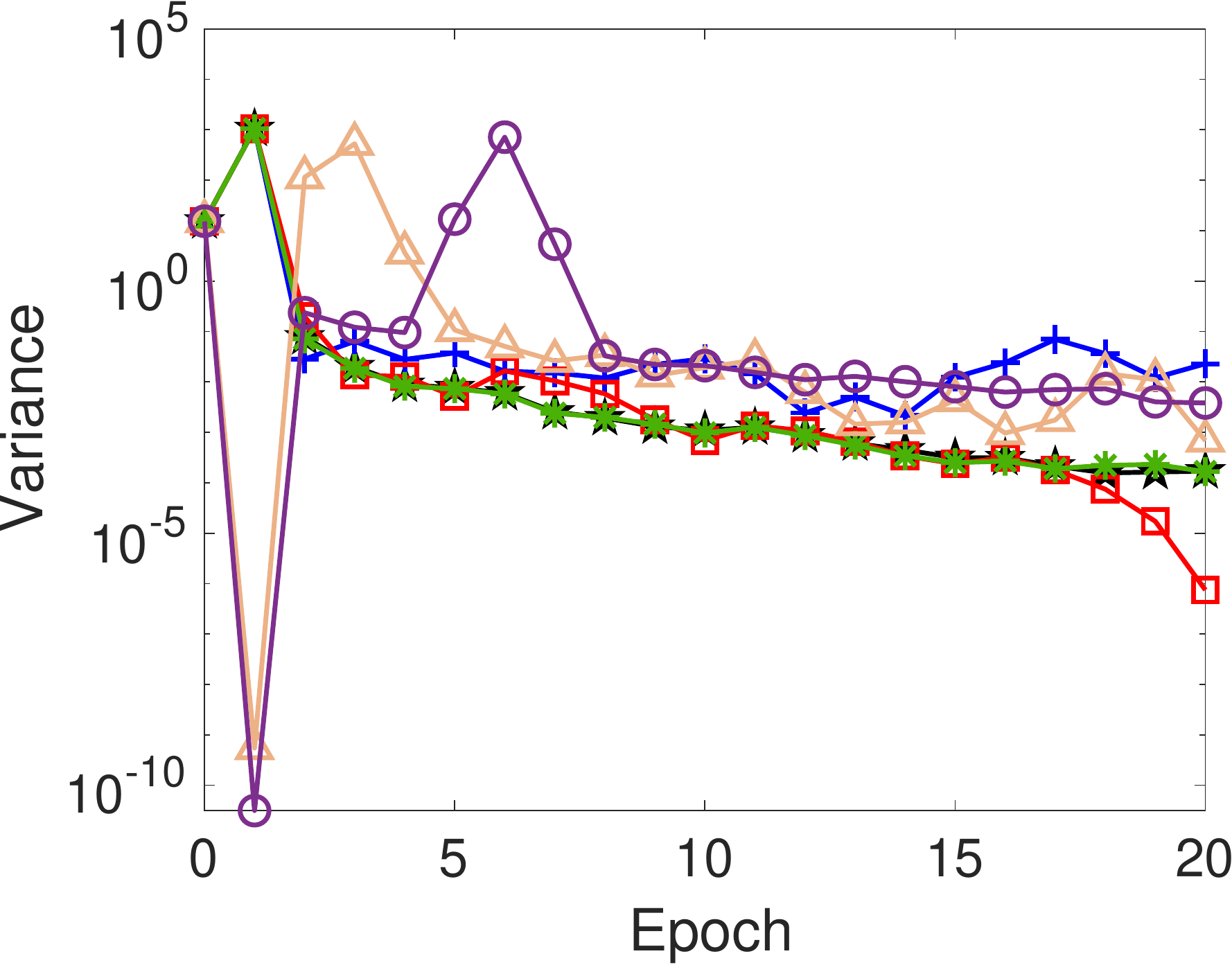}}
    \subfigure[\textit{gisette} ($\lambda=10^{-5}$)]{\includegraphics[width = 4.3cm, height = 3.3cm]{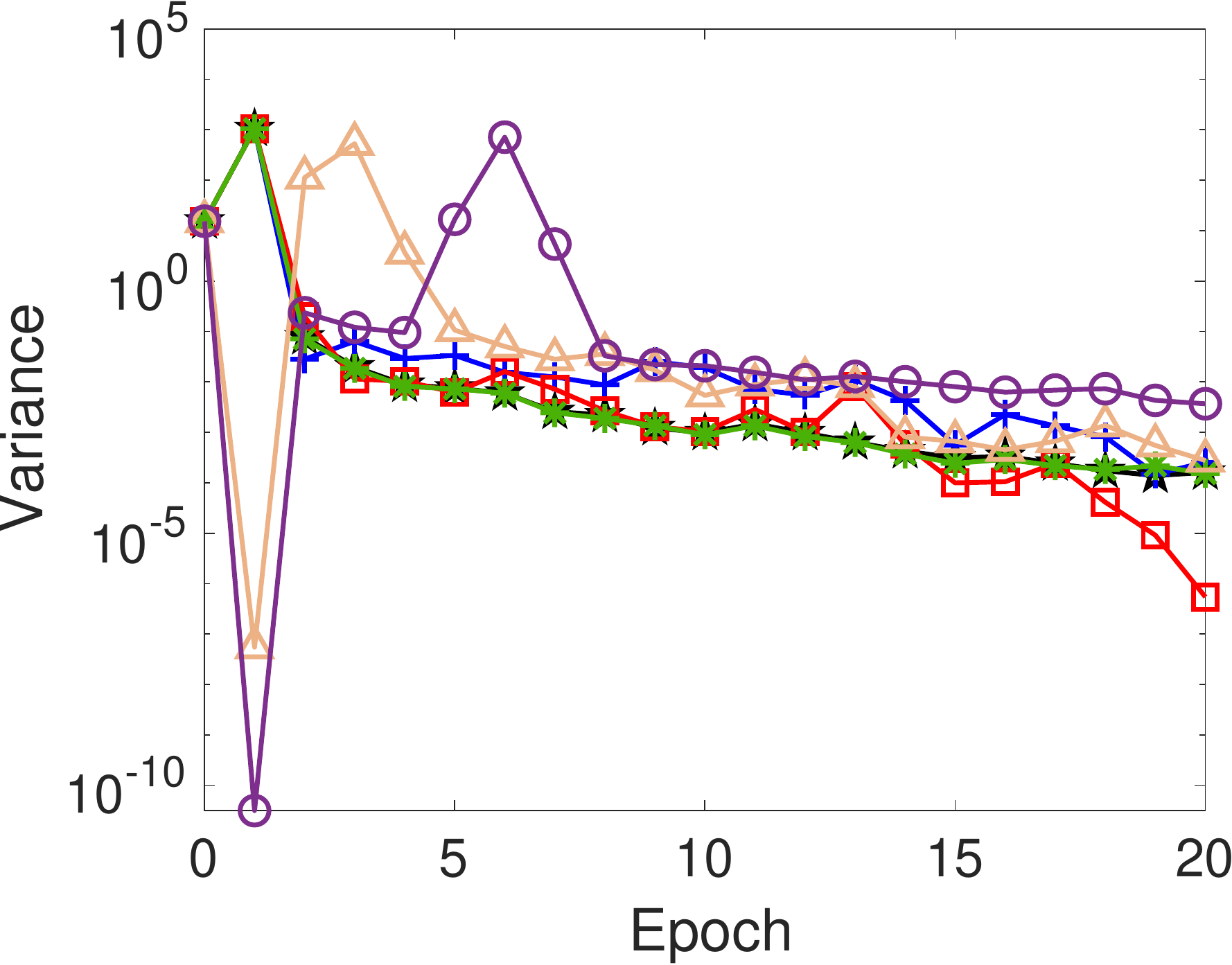}}
    \caption{Comparison on \textit{gisette} dataset}
    \label{fig:gisette}
\end{figure}
\begin{figure}[!t]
    \centering
    {\includegraphics[scale=0.35]{Legend_Main_aug22.png}} \\
    {\includegraphics[width = 4.3cm, height = 3.3cm]{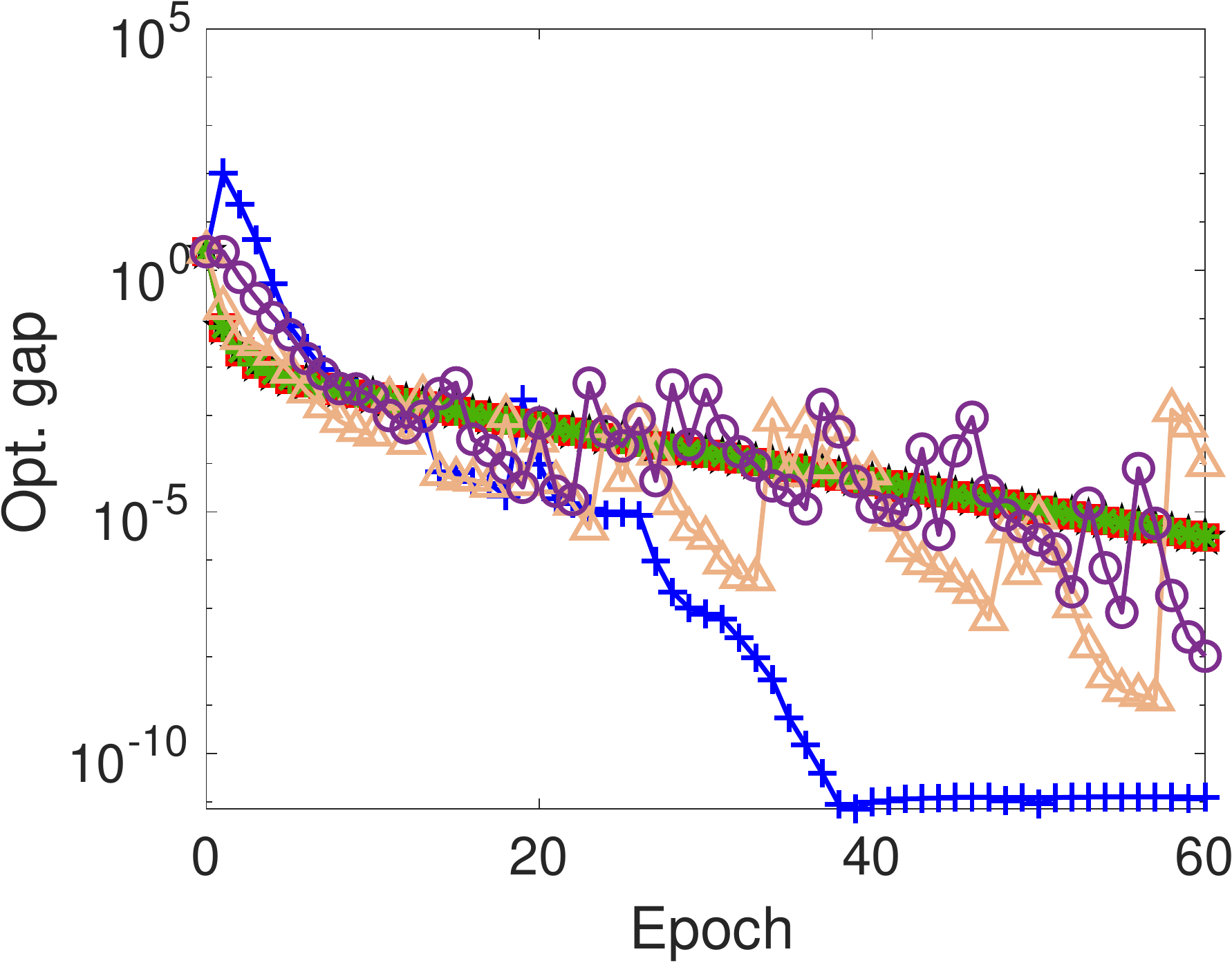}}
    {\includegraphics[width = 4.3cm, height = 3.3cm]{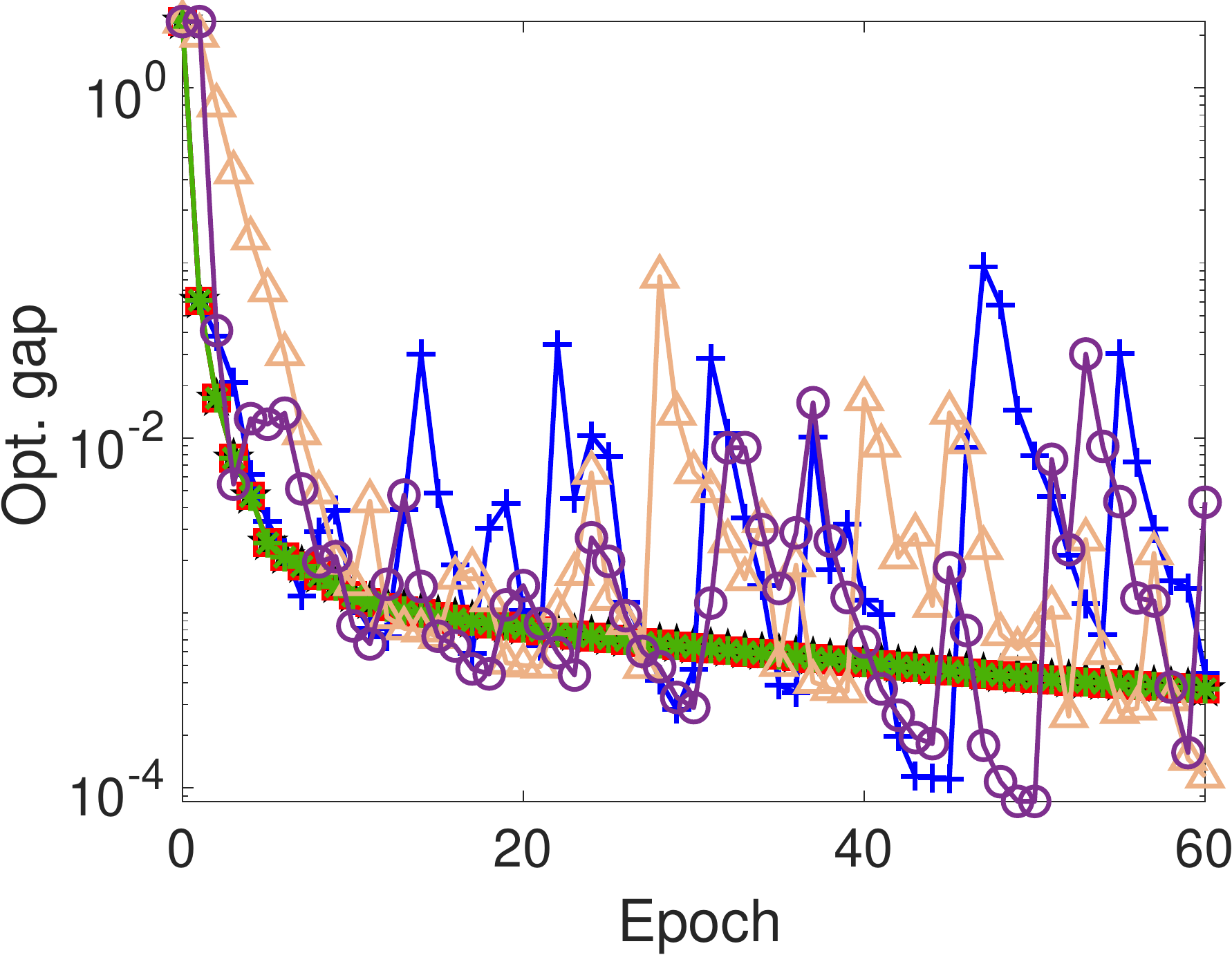}}
    {\includegraphics[width = 4.3cm, height = 3.3cm]{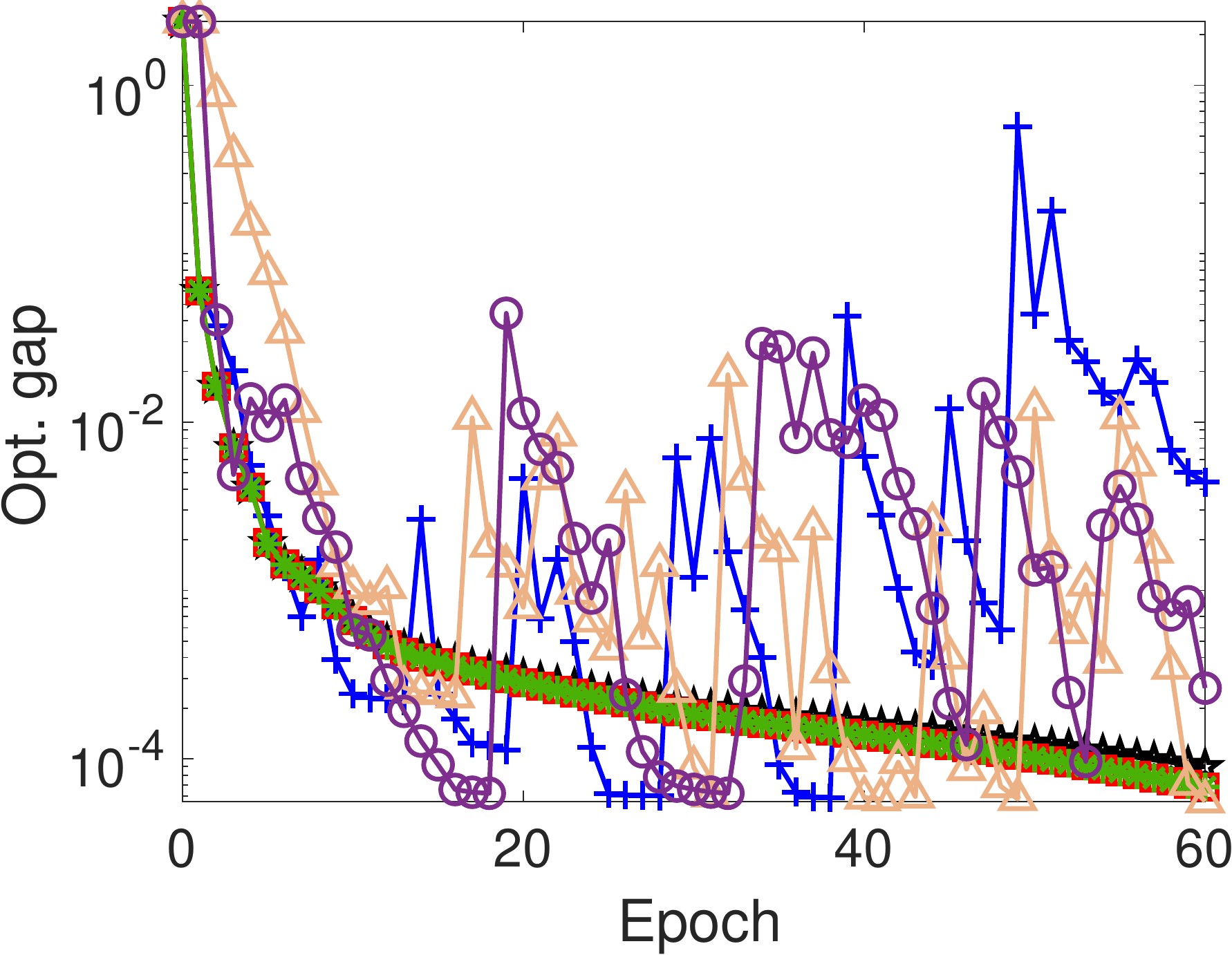}}\\
    {\includegraphics[width = 4.3cm, height = 3.3cm]{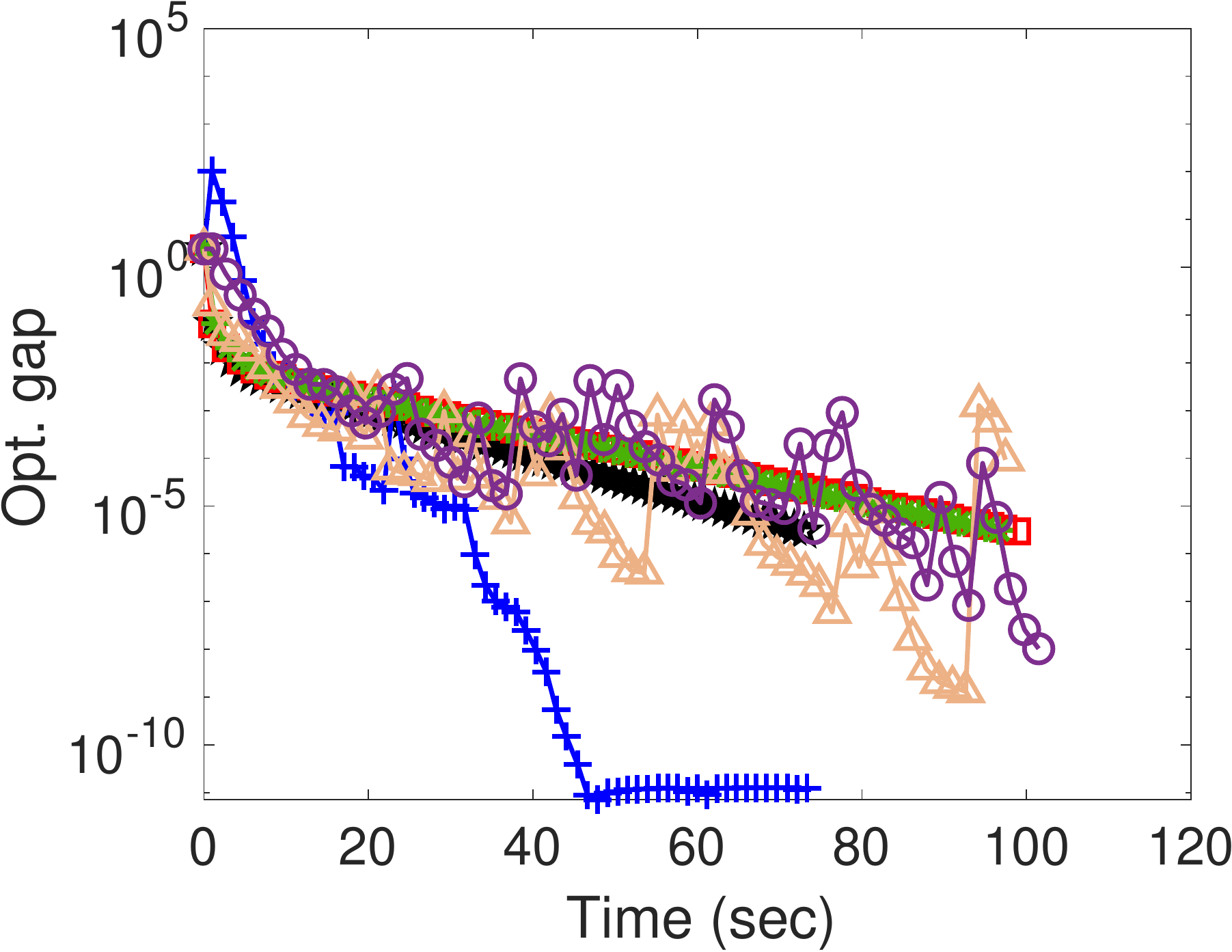}}
    {\includegraphics[width = 4.3cm, height = 3.3cm]{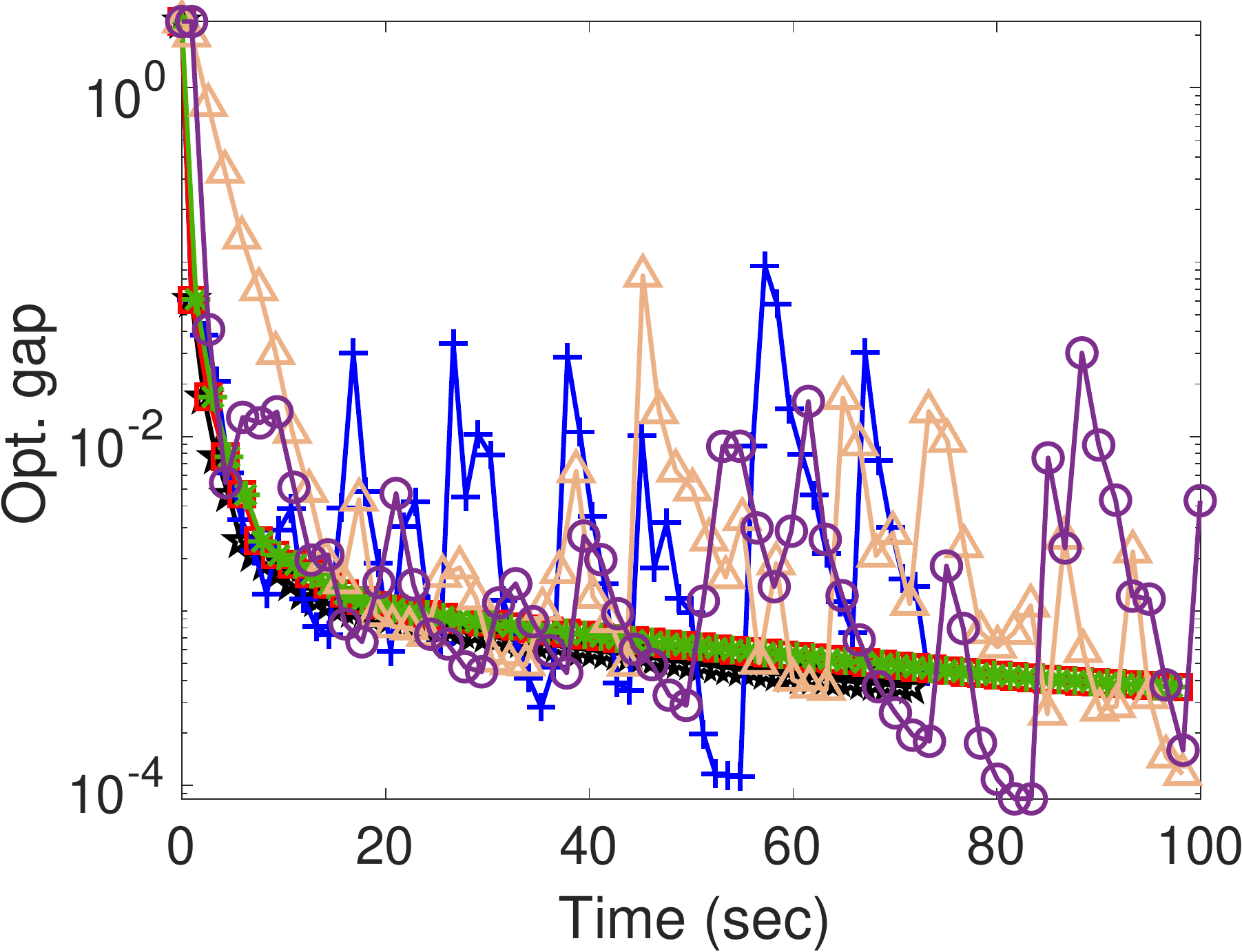}}
    {\includegraphics[width = 4.3cm, height = 3.3cm]{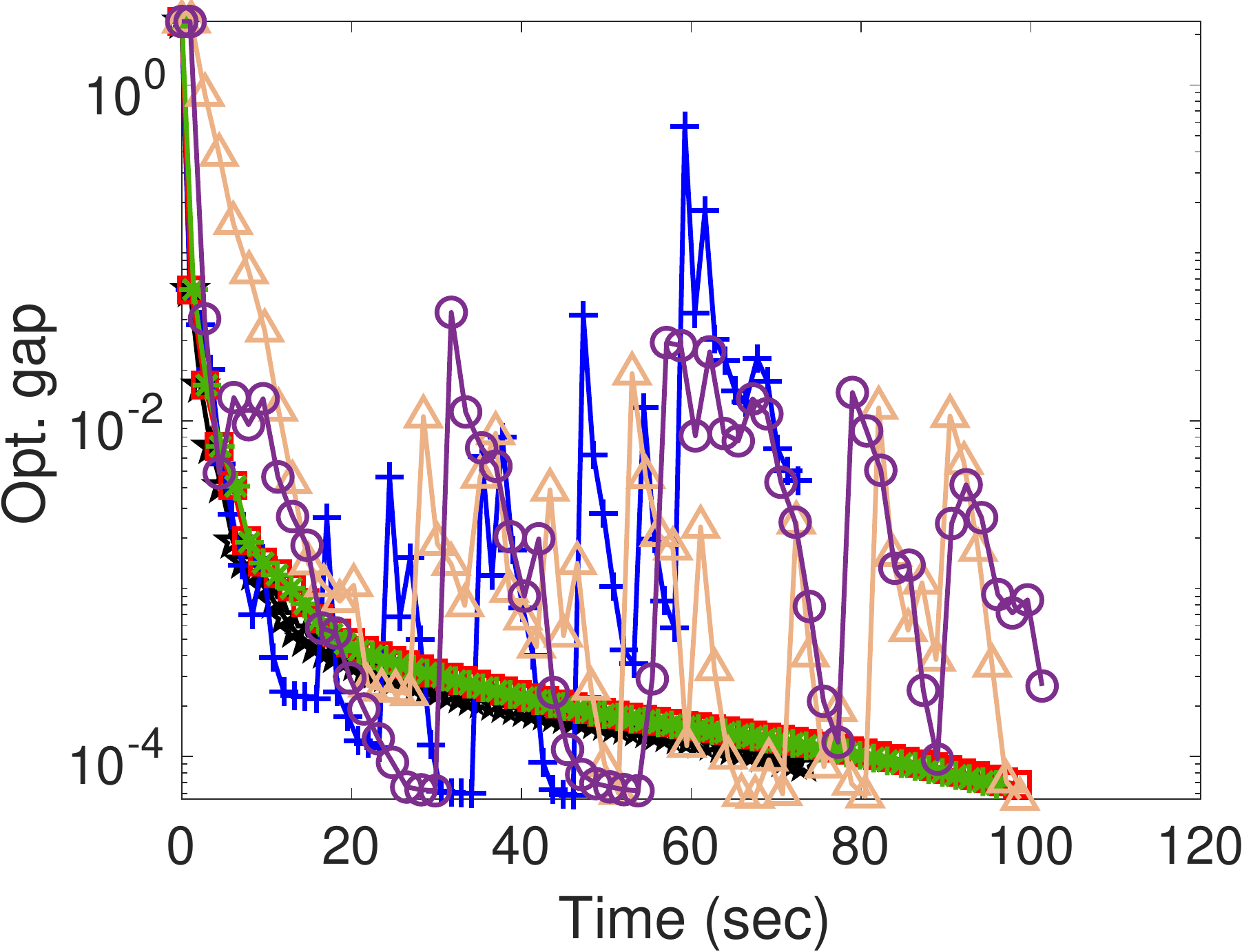}}\\
    \subfigure[\textit{adult} ($\lambda=10^{-3}$)]{\includegraphics[width = 4.3cm, height = 3.3cm]{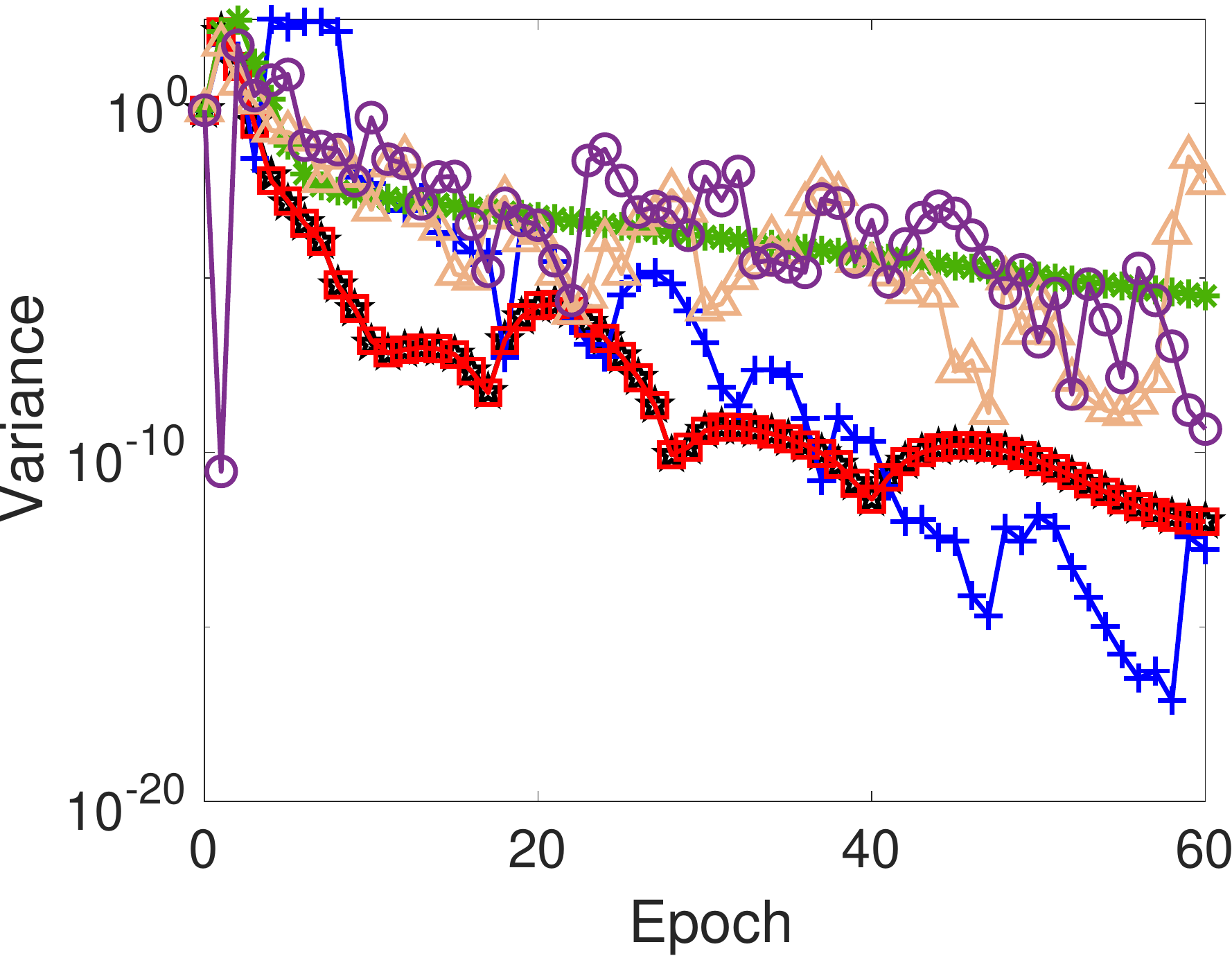}}
    \subfigure[\textit{adult} ($\lambda=10^{-4}$)]{\includegraphics[width = 4.3cm, height = 3.3cm]{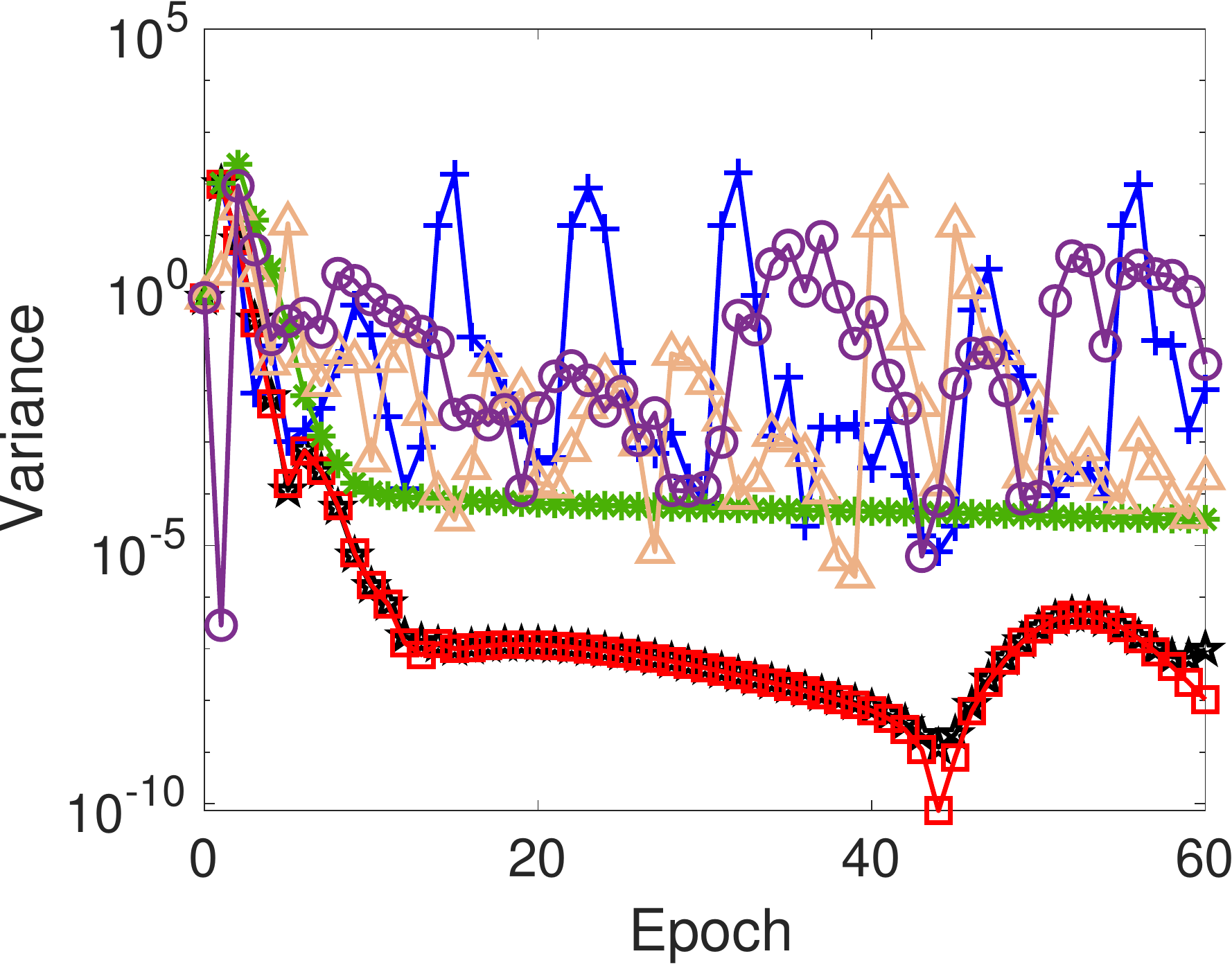}}
    \subfigure[\textit{adult} ($\lambda=10^{-5}$)]{\includegraphics[width = 4.3cm, height = 3.3cm]{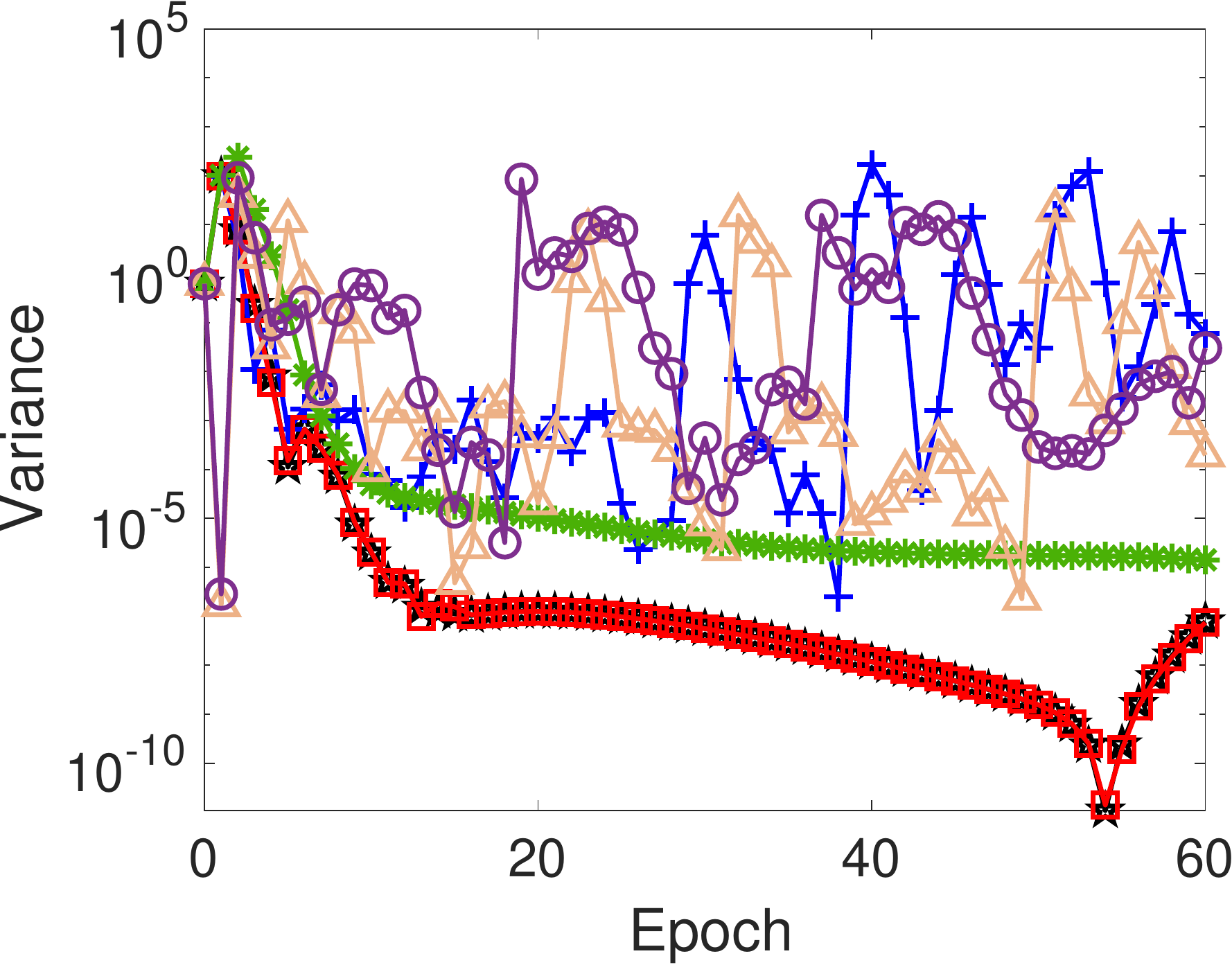}}
    \caption{Comparison on \textit{adult} dataset}
    \label{fig:adult}
\end{figure}
\begin{figure}[!h]
    \centering
    {\includegraphics[scale=0.35]{Legend_Main_aug22.png}} \\
    {\includegraphics[width = 4.3cm, height = 3.3cm]{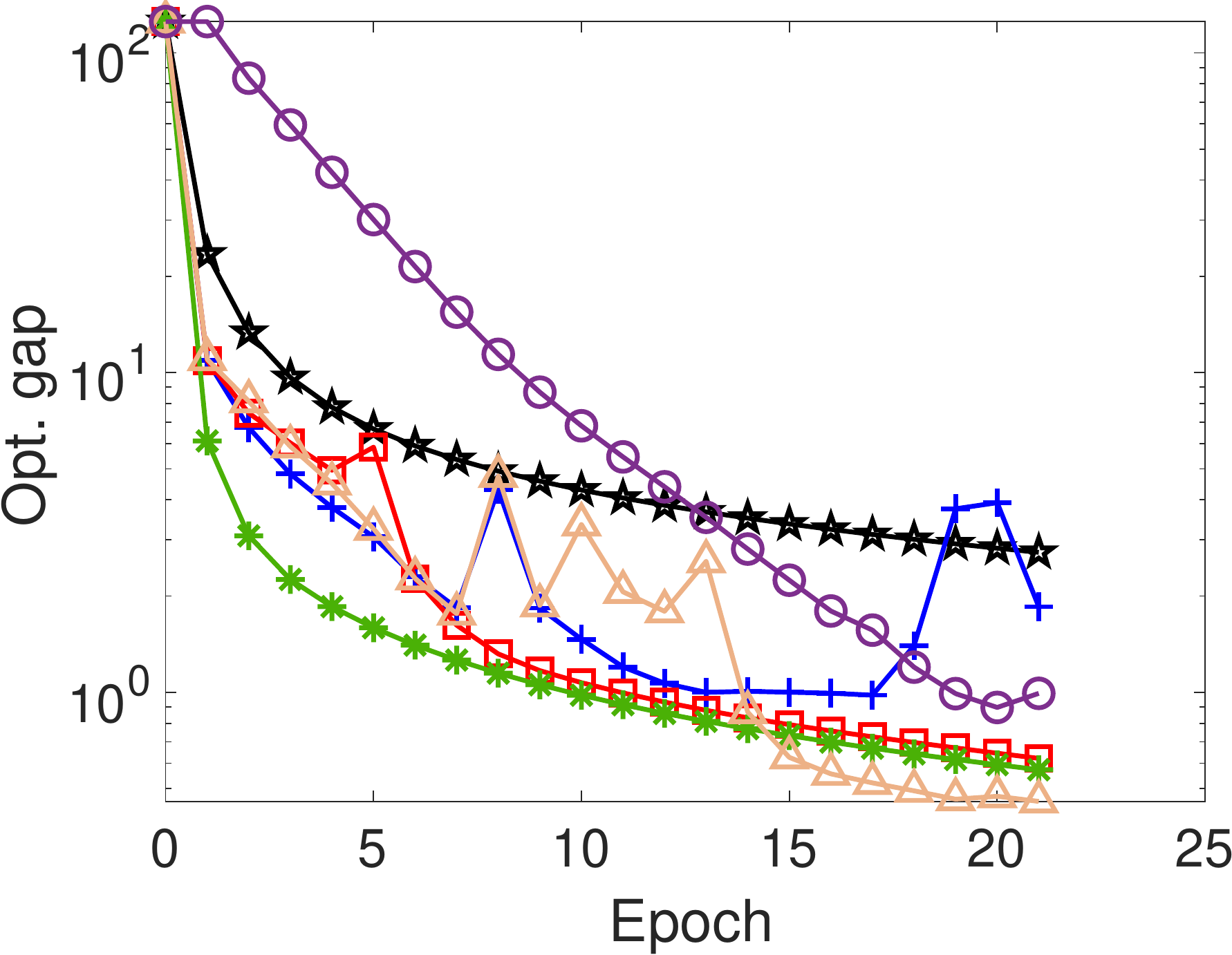}}
    {\includegraphics[width = 4.3cm, height = 3.3cm]{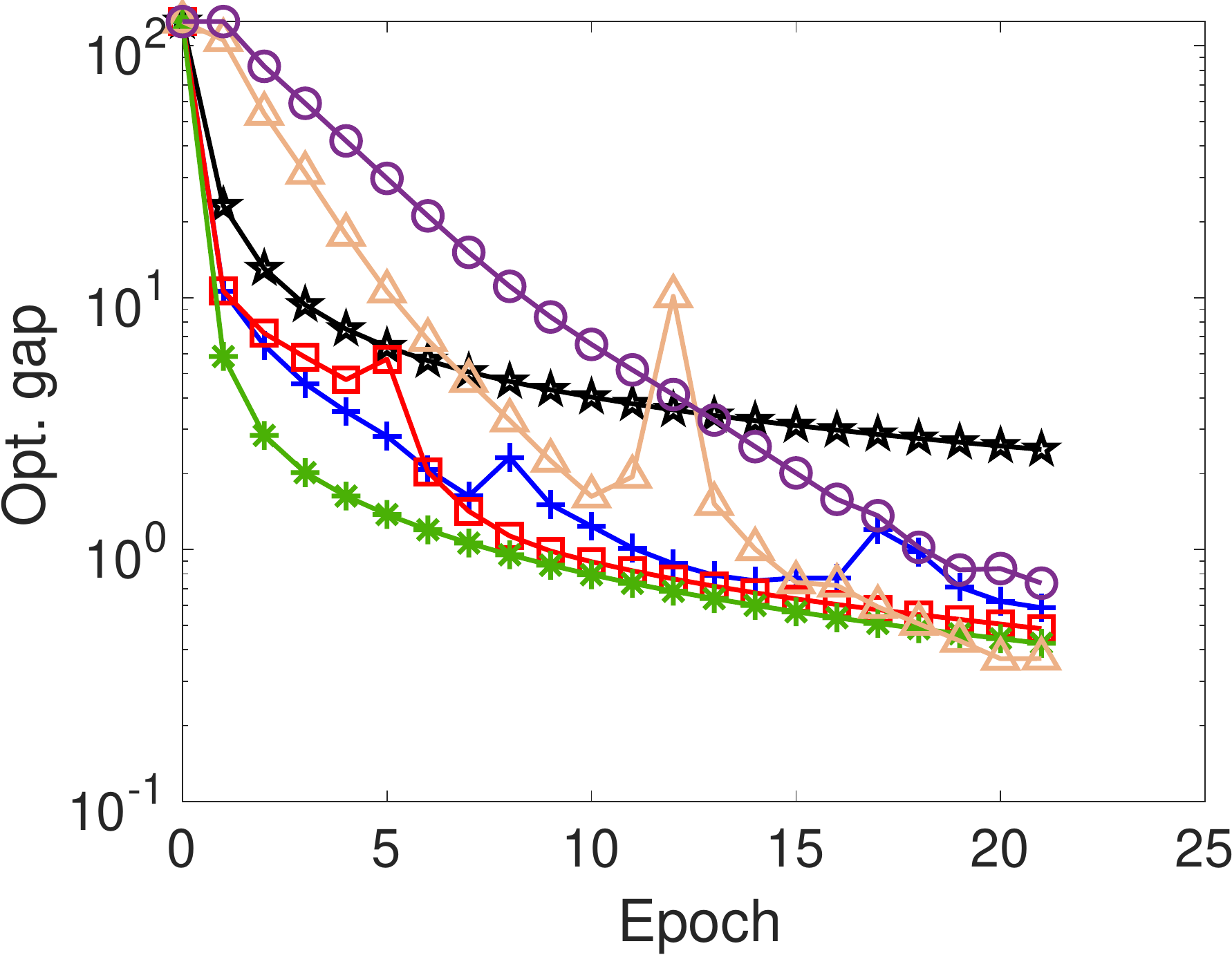}}
    {\includegraphics[width = 4.3cm, height = 3.3cm]{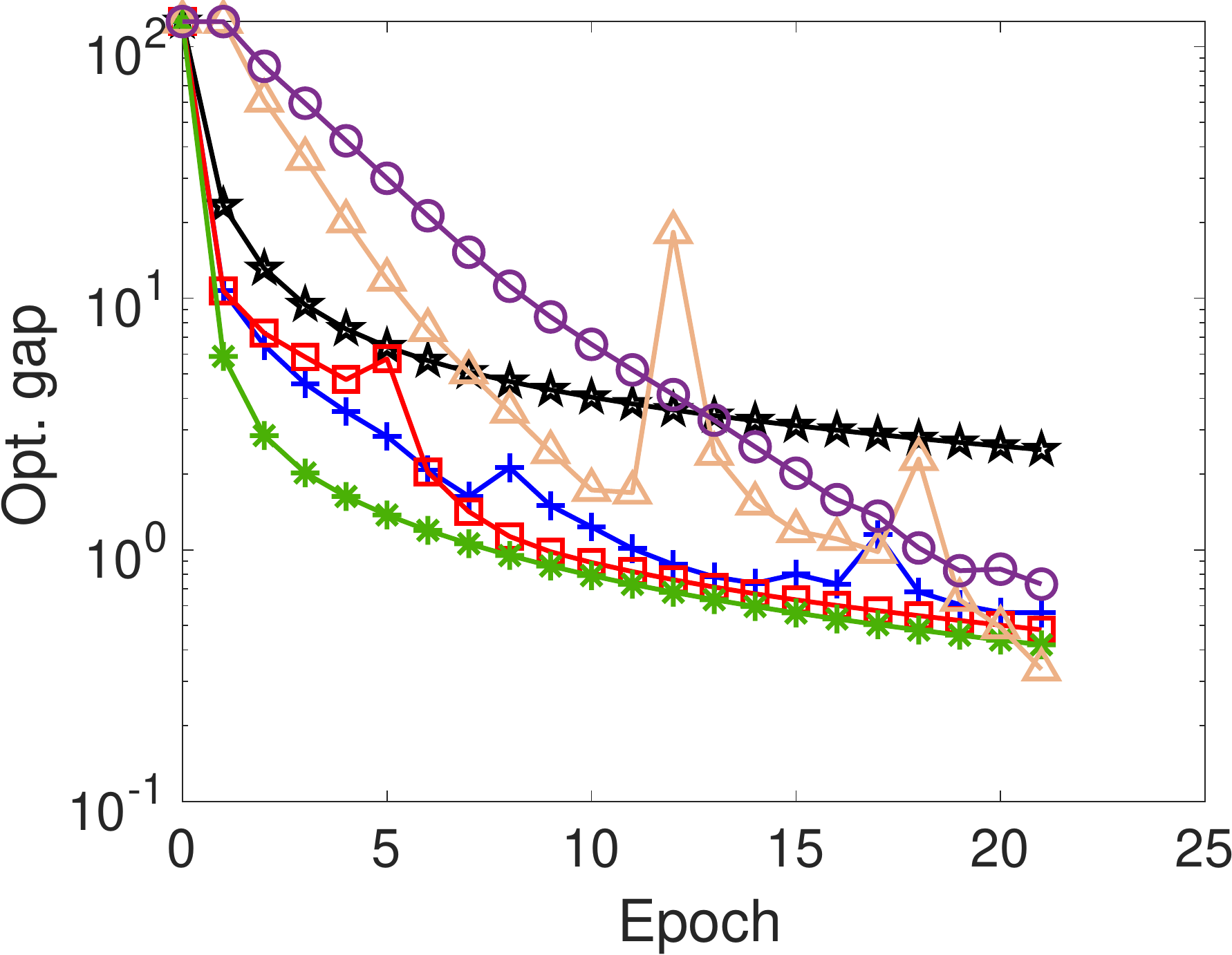}}\\
    {\includegraphics[width = 4.3cm, height = 3.3cm]{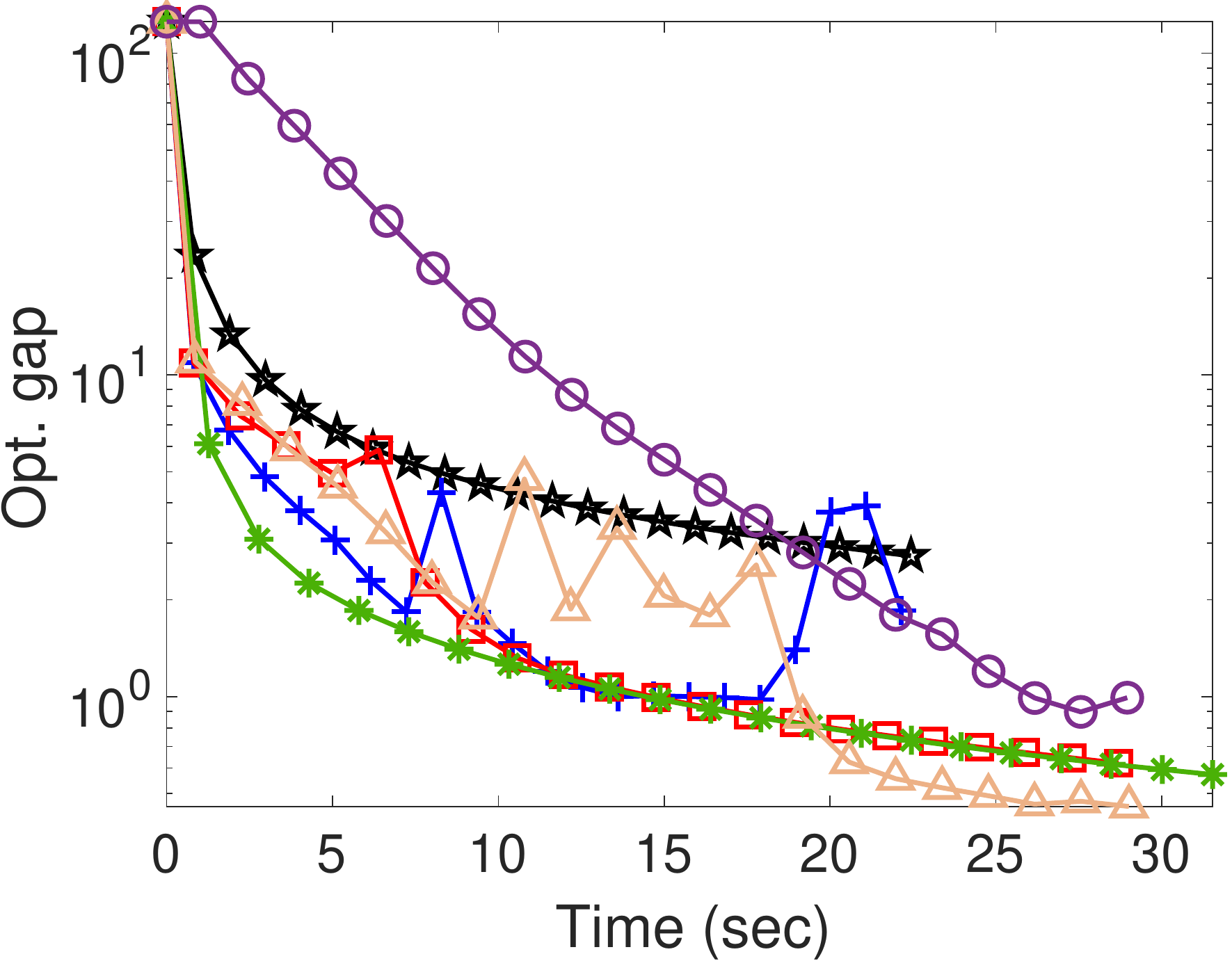}}
    {\includegraphics[width = 4.3cm, height = 3.3cm]{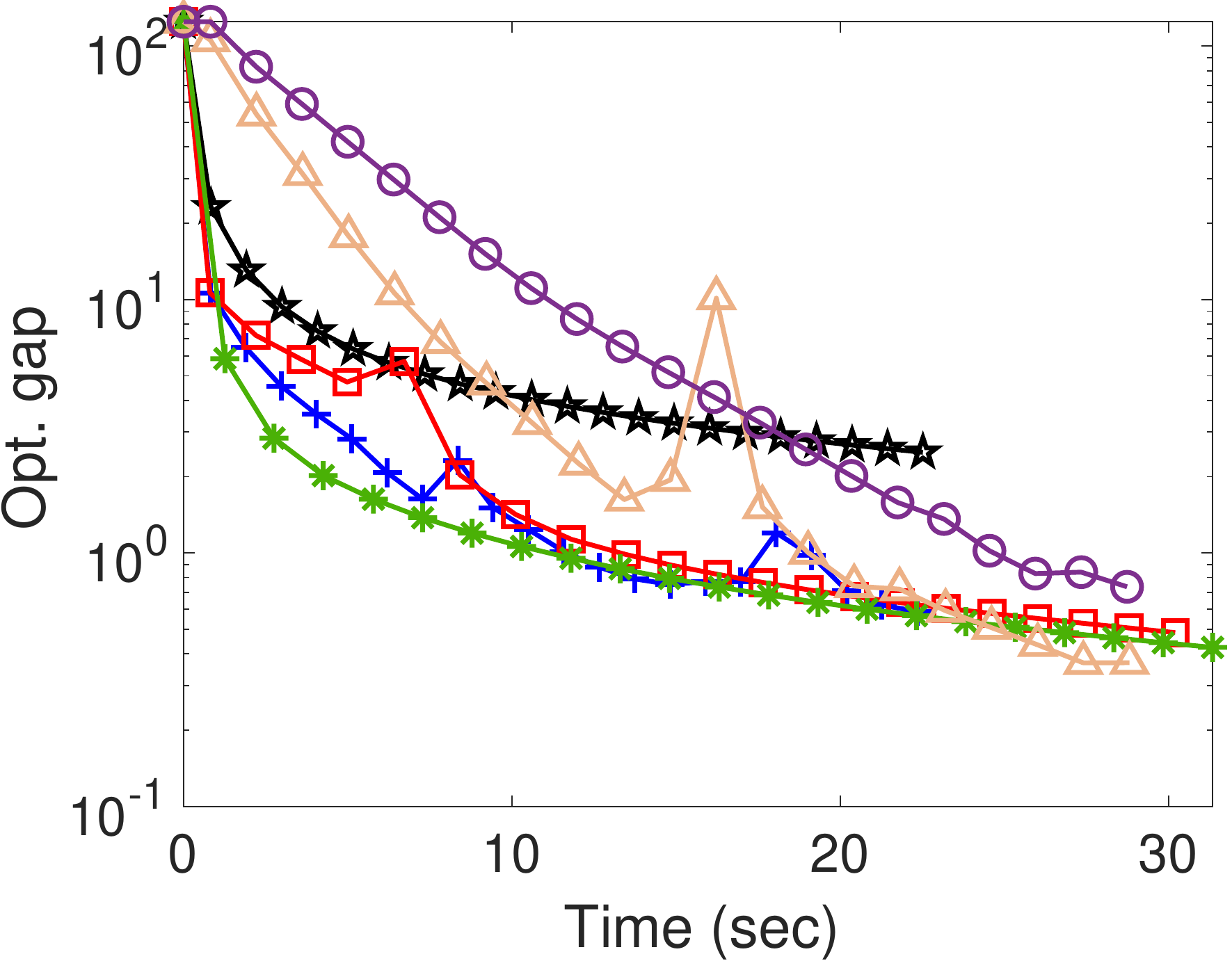}}
    {\includegraphics[width = 4.3cm, height = 3.3cm]{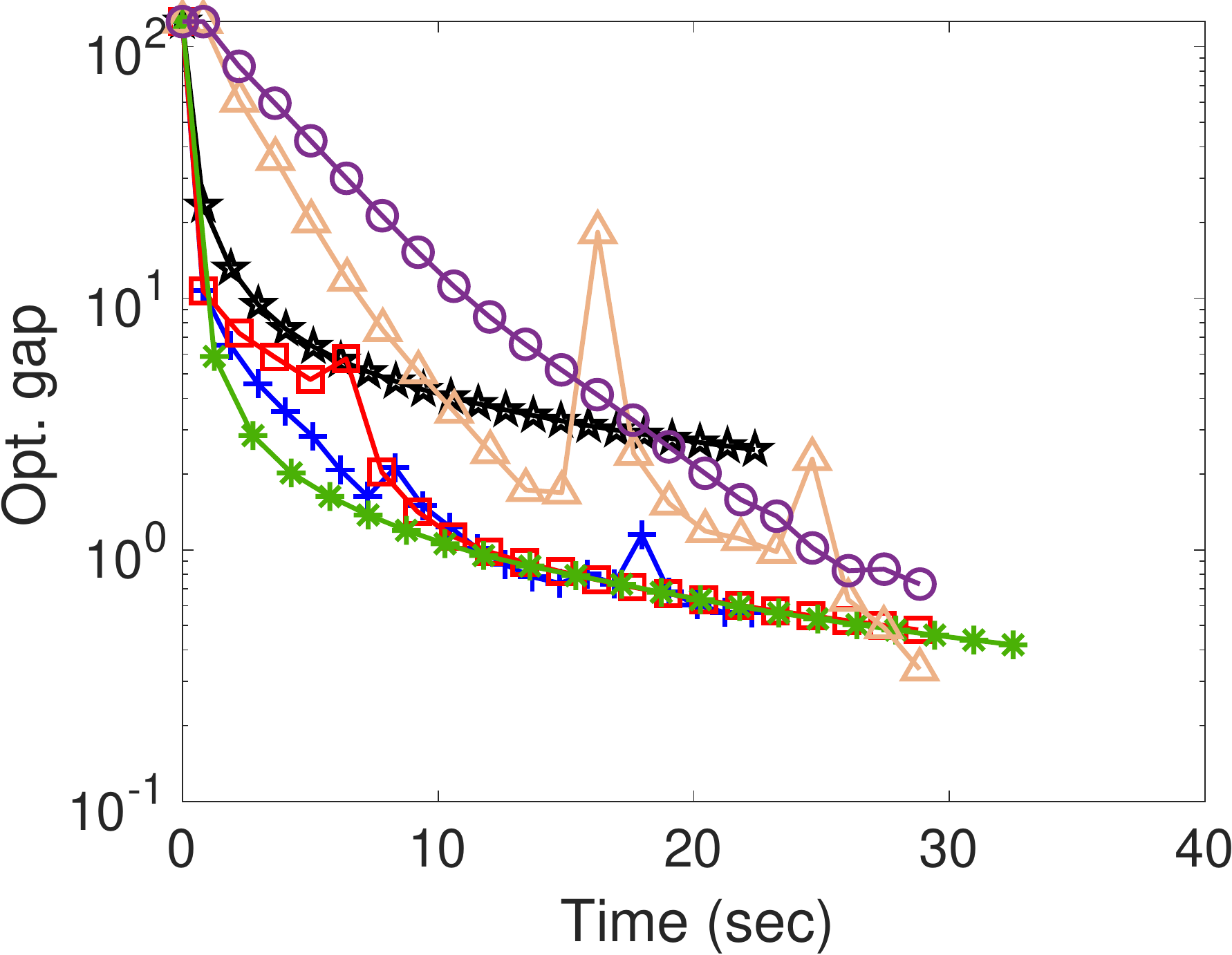}}\\
    \subfigure[\textit{mnist38} ($\lambda=10^{-3}$)]{\includegraphics[width = 4.2cm, height = 3.3cm]{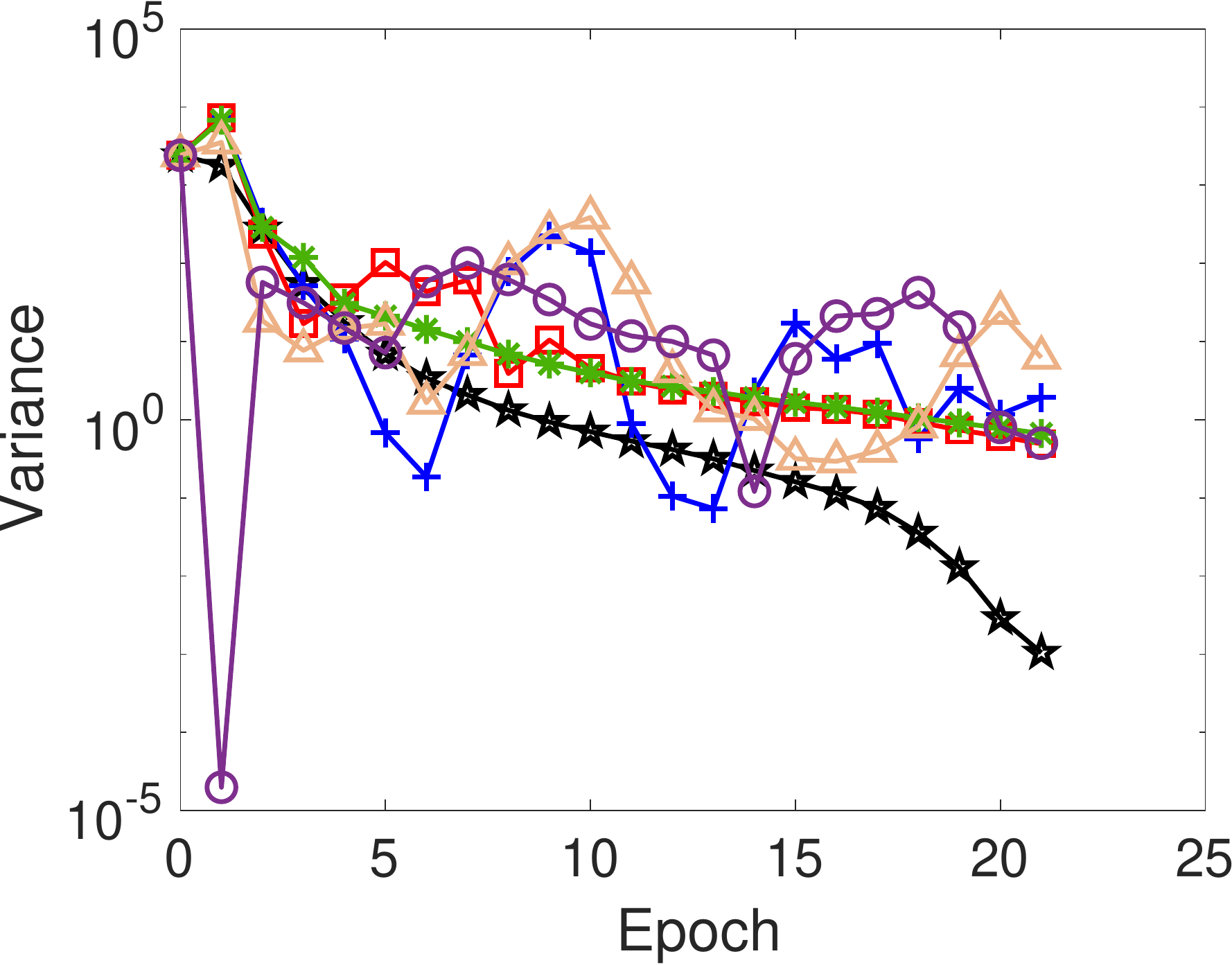}}
    \subfigure[\textit{mnist38} ($\lambda=10^{-4}$)]{\includegraphics[width = 4.2cm, height = 3.3cm]{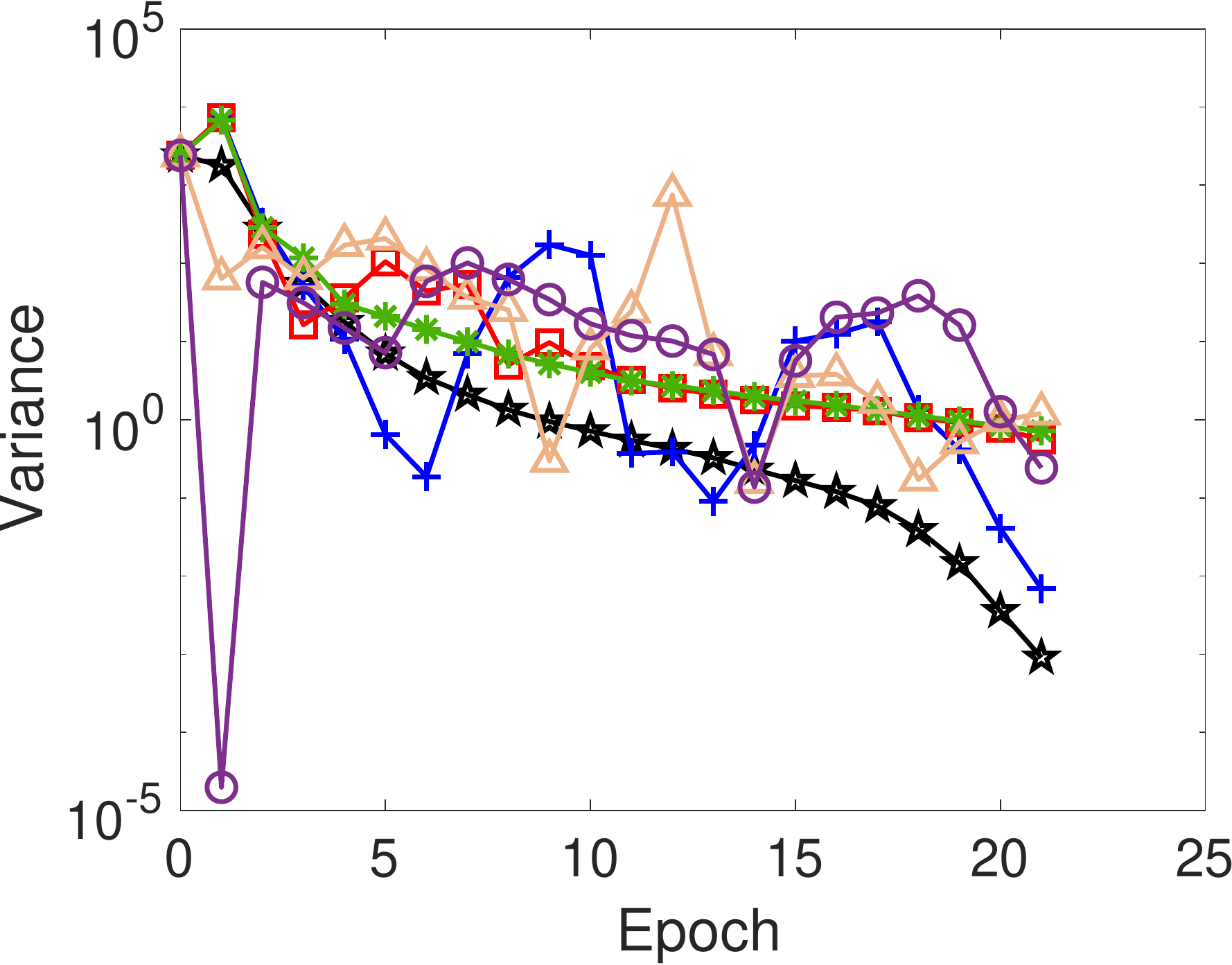}}
    \subfigure[\textit{mnist38} ($\lambda=10^{-5}$)]{\includegraphics[width = 4cm, height = 3.3cm]{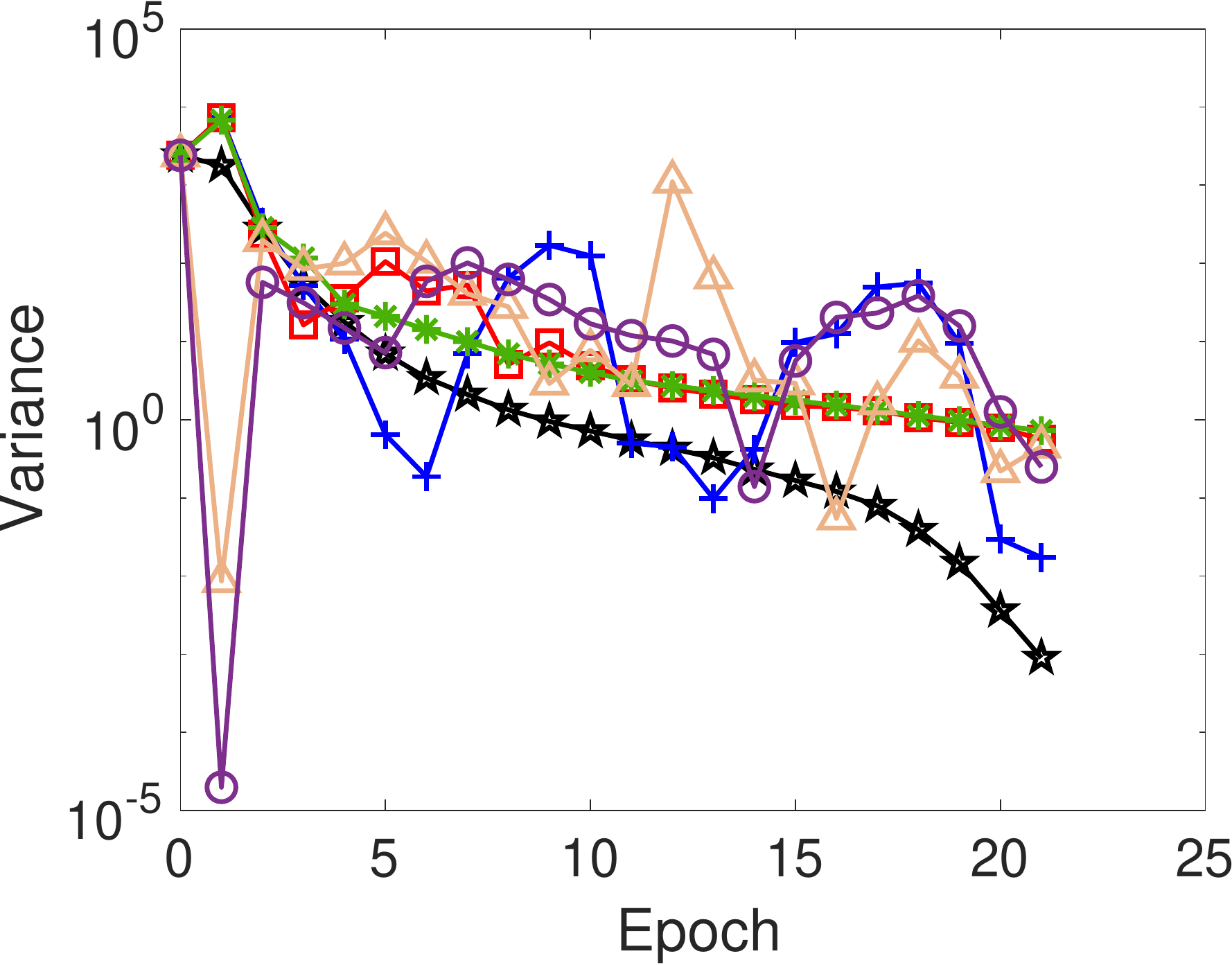}}
    \caption{Comparison on \textit{mnist38} dataset}
    \label{fig:mnist}
\end{figure}
\begin{figure}[!h]
    \centering
    {\includegraphics[scale=0.35]{Legend_Main_aug22.png}} \\
    {\includegraphics[width = 4.5cm, height = 3.5cm]{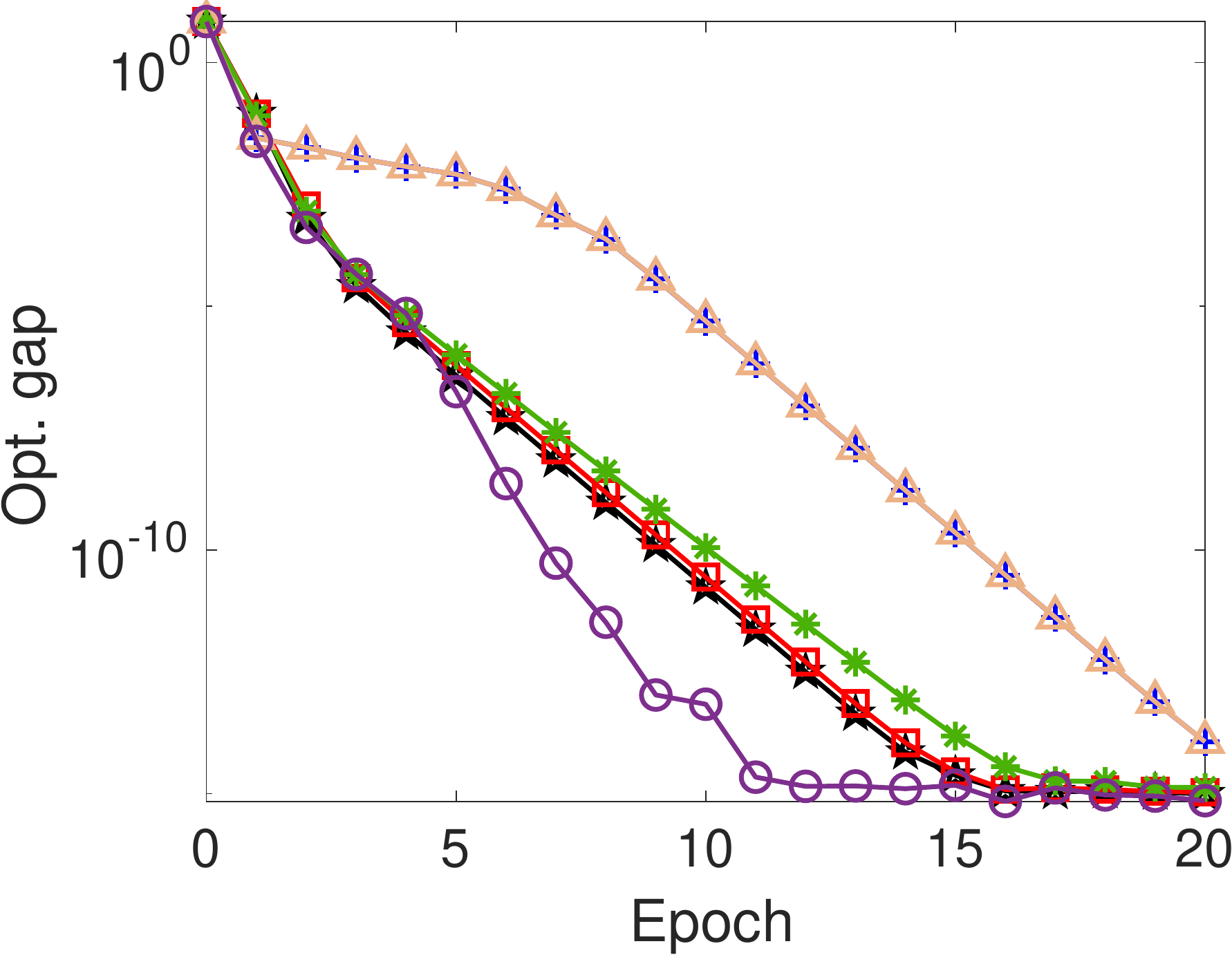}}\hspace{2cm}
    {\includegraphics[width = 4.5cm, height = 3.5cm]{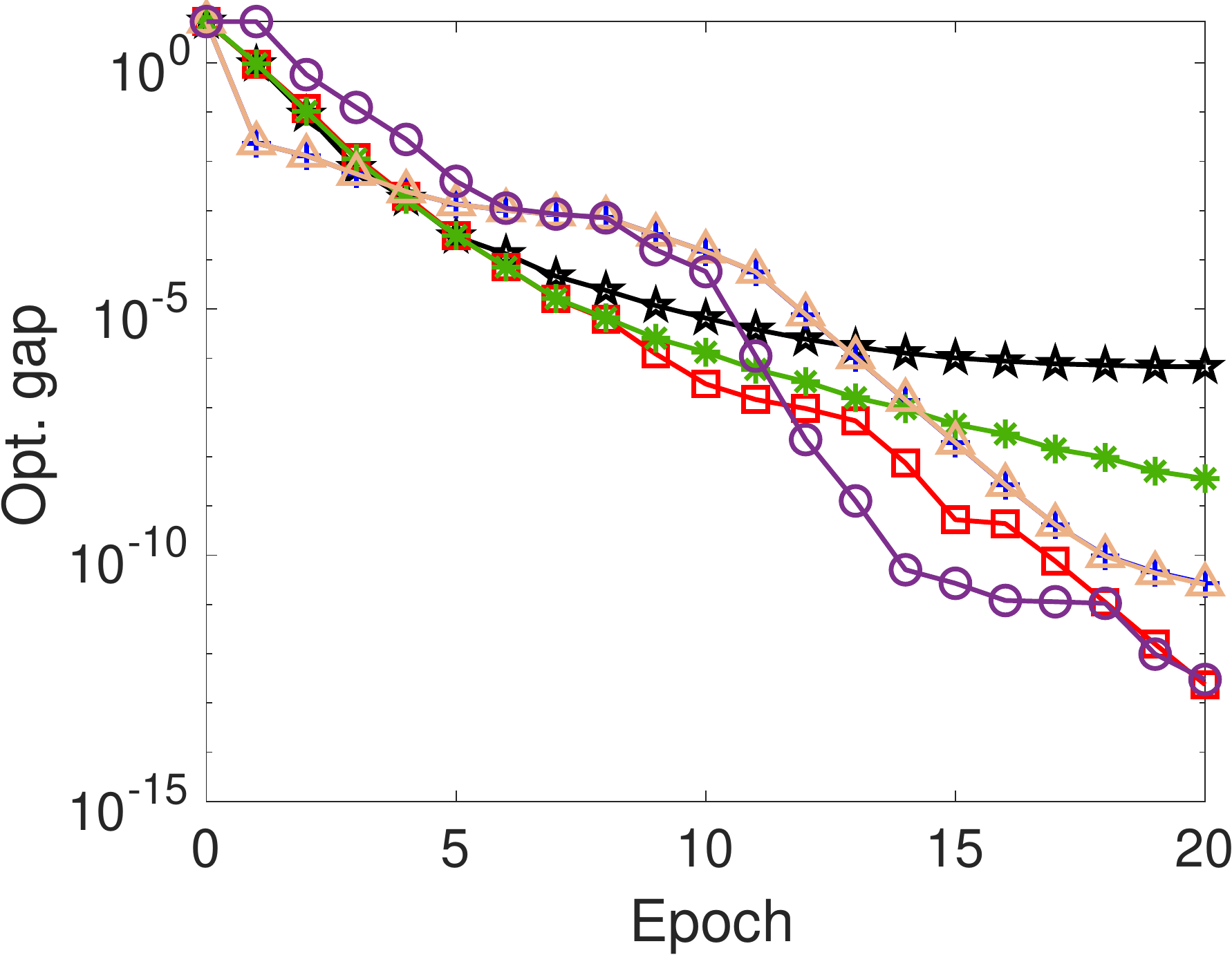}}\\
    {\includegraphics[width = 4.5cm, height = 3.5cm]{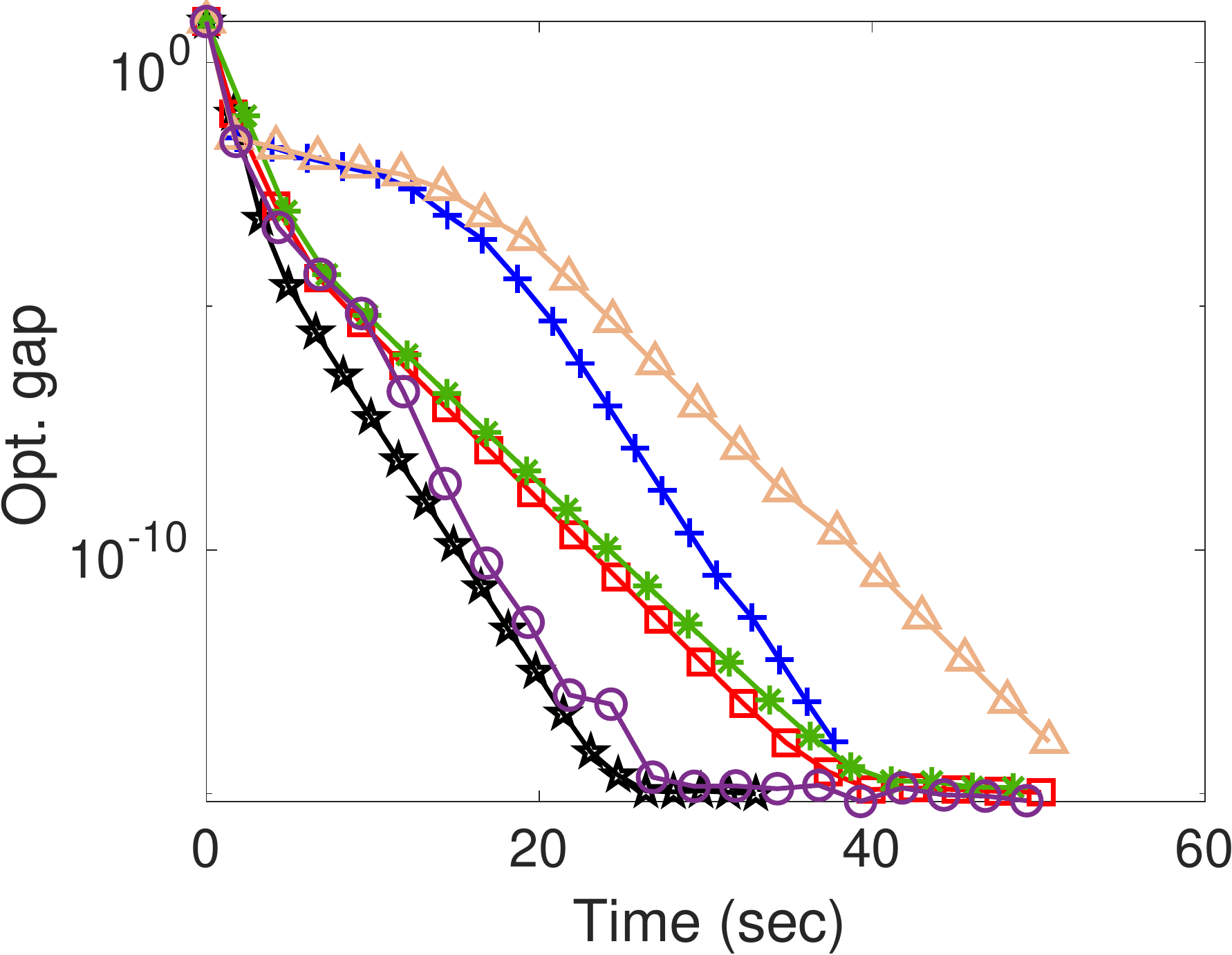}}\hspace{2cm}
    {\includegraphics[width = 4.5cm, height = 3.5cm]{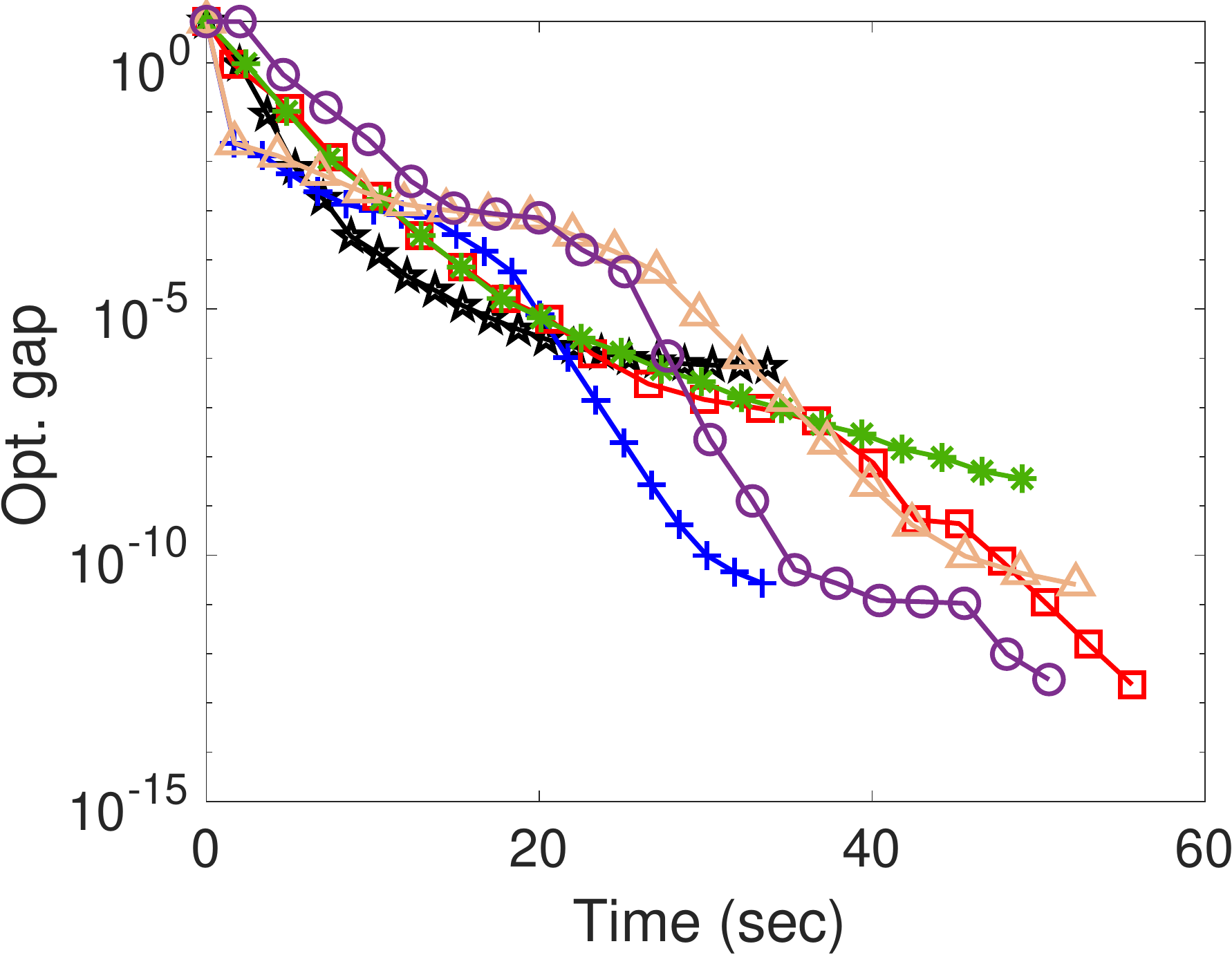}}\\
    \subfigure[\textit{ijcnn1} ($\lambda=10^{-2}$)]{\includegraphics[width = 4.5cm, height = 3.5cm]{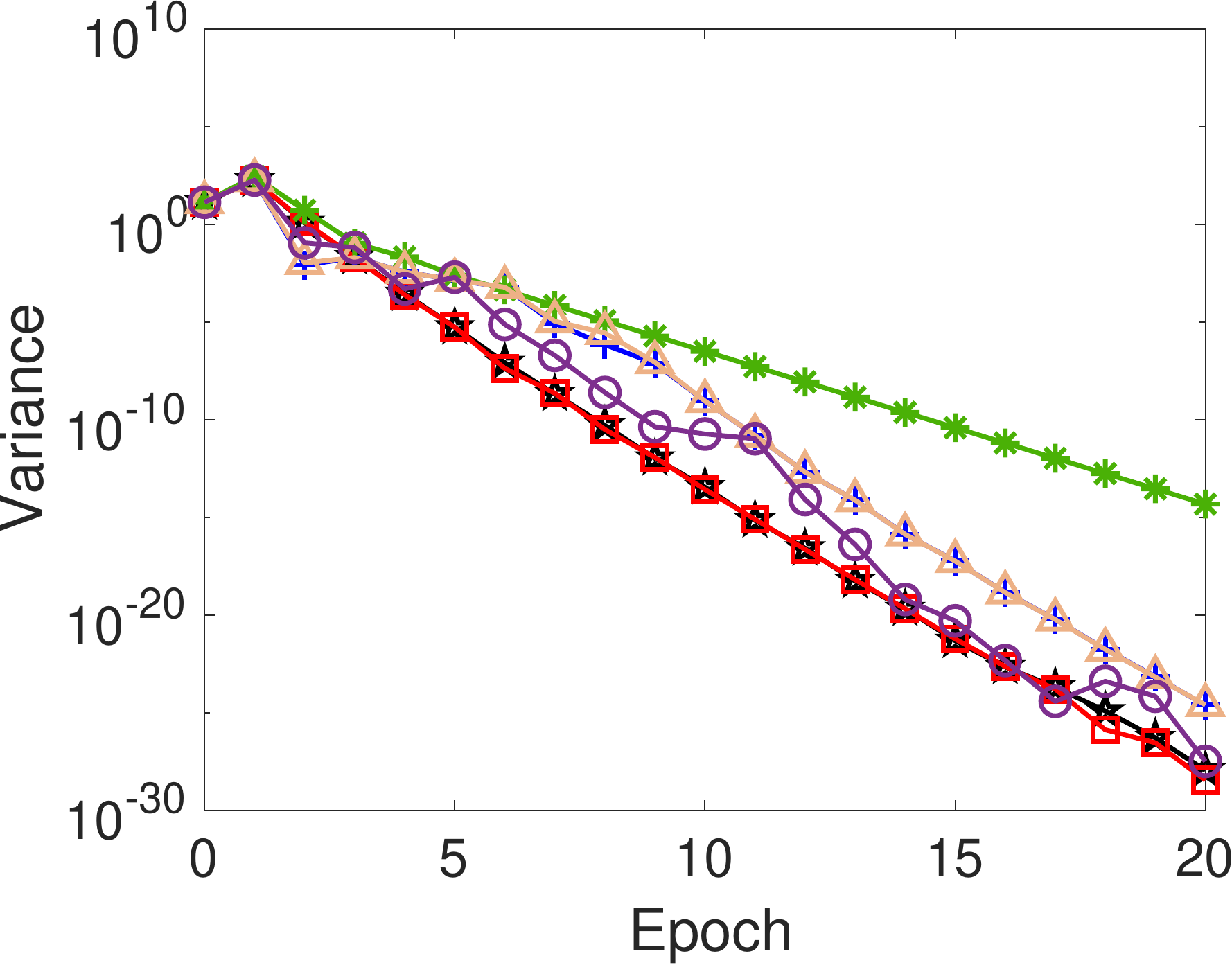}}\hspace{2cm}
    \subfigure[\textit{ijcnn1} ($\lambda=10^{-3}$)]{\includegraphics[width = 4.5cm, height = 3.5cm]{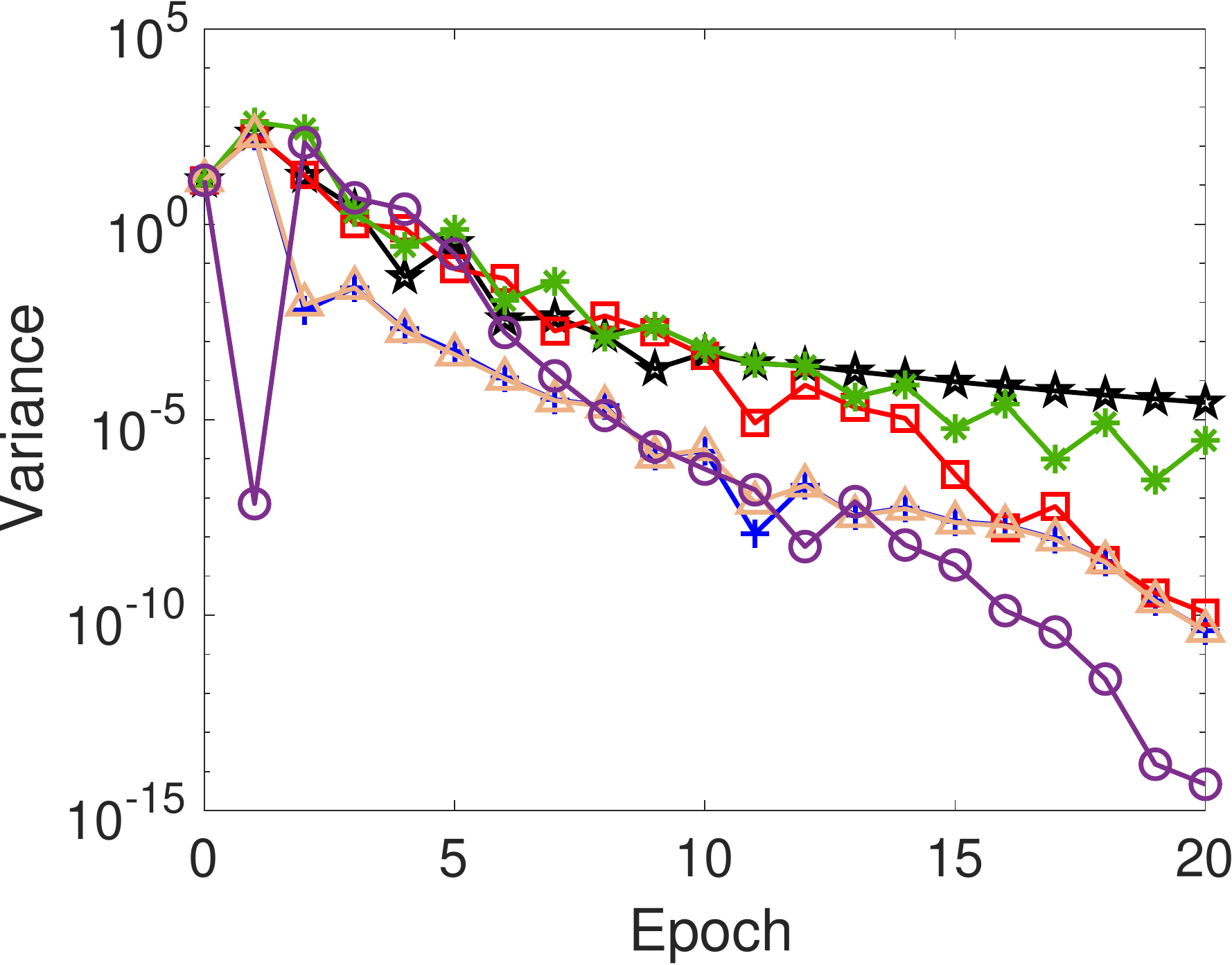}}
    \caption{Comparison on \textit{ijcnn1} dataset}
    \label{fig:ijcnn1}
\end{figure}
 \clearpage
\section{Conclusion}
 In this paper we proposed the method to reducing the variance of stochastic gradients using the Barzilai-Borwein approximation. We then incorporate the BB-step size of SVRG-BB~\cite{SVRGBB_NIPS2016} with SVRG-2BB. When the objective function is strongly convex, we proved the linear convergence of Algorithm~\ref{alg:Algorithm_1} which includes SVRG-2BB, and SVRG-2~\cite{GOWER18a}, diagonal Hessian approximation SVRG-2D~\cite{GOWER18a}, SVRG-2BBS. We conducted the numerical experiments on the benchmark datasets and show that the proposed method with constant step size performs better than the existing variance reduced methods for some test problems.  In terms of optimal cost with respect to both CPU time and epoch, experimental results on benchmark datasets show that the SVRG-2BB performs stable.

\bibliographystyle{plain}
\bibliography{arxiv}
\end{document}